\newif\ifarxiv
\def\BibTeX{{\rm B\kern-.05em{\sc i\kern-.025em b}\kern-.08em
    T\kern-.1667em\lower.7ex\hbox{E}\kern-.125emX}}
\newcommand{\uwidehat}[1]{%
  \mathpalette\douwidehat{#1}%
}
\newcommand{\douwidehat}[2]{%
  \sbox0{$\m@th#1\widehat{\hphantom{#2}}$}%
  \sbox2{$\m@th#1x$}
  \sbox4{$\m@th#1#2$}
  \dimen0=\ht0
  \advance\dimen0 -.8\ht2
  \dimen2=\dp4
  \rlap{%
    \raisebox{\dimexpr\dimen0-\dimen2}{%
      \scalebox{1}[-1]{\box0}%
    }%
  }%
  {#2}%
}
\def\namedlabel#1#2{\begingroup
    #2%
    \def\@currentlabel{#2}%
    \phantomsection\label{#1}\endgroup
}
\newcommand{\leqnomode}{\tagsleft@true\let\veqno\@@leqno}
\newcommand{\customlabel}[2]{%
\protected@write\@auxout{}{\string \newlabel{#1}{{#2}{\thepage}{#2}{#1}{}}}%
\hypertarget{#1}{#2}
}
\newcommand\PMPODE[1]{\hyperref[PMPODE]{$\textbf{ODE}_{#1}$}\xspace}
\newcommand\PMPODElambda[1]{\hyperref[PMPODElambda]{$\textbf{ODE}_{#1}^\lambda$}\xspace}
\newcommand\PMPODElambdaunder[1]{\hyperref[PMPODElambda_under]{$\uwidehat{\textbf{ODE}_{#1}^\lambda}$}\xspace}
\newcommand\PMPODElambdaover[1]{\hyperref[PMPODElambda_over]{$\widehat{\textbf{ODE}_{#1}^\lambda}$}\xspace}
\newcommand\PMPODEepsilonfullrank[1]{\hyperref[PMPODEepsilon_full_rank]{$\textbf{ODE}_{#1}^\epsilon$}\xspace}
\newcommand\AssumXa{\hyperref[assumption:X0]{A4a}\xspace}
\newcommand\AssumXb{\hyperref[assumption:X0]{A4b}\xspace}
\newcommand\PMPODEg[1]{\hyperref[PMPODEg]{$\textbf{ODE}^g_{#1}$}\xspace}
\newcommand\OCPg[1]{\hyperref[OCPg]{$\textbf{OCP}^g_{#1}$}\xspace}
\newcommand\OCP[1]{\hyperref[OCP]{$\textbf{OCP}_{#1}$}\xspace}
\newcommand\BVP[1]{\hyperref[BVP]{$\textbf{BVP}_{#1}$}\xspace}
\newcommand\OCPMPC[1]{\hyperref[OCPMPC]{$\textbf{OCP}(#1)$}\xspace}
\newcolumntype{C}{>{\centering\arraybackslash}p{19mm}}
\newcolumntype{G}{>{\centering\arraybackslash}p{4mm}}
\newcolumntype{S}{>{\centering\arraybackslash\scriptsize}p{4mm}}
\renewcommand*{\ALG@name}{Alg.}
\newtheorem{lemma}{Lemma}
\newtheorem{corollary}{Corollary}
\newtheorem{remark}{Remark}
\newcommand\blue[1]{\textcolor{blue}{#1}}
\newcommand\mydots{\hbox to 1em{.\hss.\hss.}}
\newcommand{\comp}{\mathsf{c}} 
\newcommand{\dd}{\textrm{d}} 
\newcommand{\hull}{\textrm{H}} 
\newcommand{\cS}{\mathcal{S}}
\newcommand{\Int}{\textrm{Int}}
\newcommand{\M}{\mathcal{M}}
\def\Inf{\operatornamewithlimits{inf\vphantom{p}}}
\newcommand{\C}{\mathcal{C}}
\newcommand{\X}{\mathcal{X}}
\newcommand{\cR}{\mathcal{R}}
\newcommand{\R}{\mathbb{R}}
\newcommand{\bN}{\mathbb{N}}
\newcommand{\W}{\mathcal{W}}
\newtheorem{preremark3}{Theorem}[section]
\newmdtheoremenv{theo}{Theorem}
\newcommand\AverageSmallMatrix[1]{{%
  \footnotesize\arraycolsep=0.22\arraycolsep\ensuremath{\begin{bmatrix}#1\end{bmatrix}}}}
\newcommand\rev[1]{\textcolor{black}{#1}}
\def\tcb@parbox@use@false{%
  \def\@parboxrestore{\linewidth\hsize\let\@parboxrestore=\tcb@parboxrestore}%
}
\begin{document}
\title{Convex Hulls of Reachable Sets}
\ifarxiv
    \author{Thomas Lew, 
    Riccardo Bonalli, 
    Marco Pavone
\thanks{The NASA University Leadership Initiative (grant \#80NSSC20M0163), Toyota Research Institute, and the Agence Nationale de la Recherche (grant ANR-22-CE48-0006) provided funds to assist the authors with their research, but this article solely reflects the opinions and conclusions of its authors. }
\thanks{Thomas Lew is with Toyota Research Institute, Los Altos, USA (e-mail: thomas.lew@tri.global). }
\thanks{Riccardo Bonalli is with the Laboratory of Signals \& Systems, Paris-Saclay University, CNRS, CentraleSup\'{e}lec, France (e-mail: riccardo.bonalli@l2s.centralesupelec.fr).}
\thanks{Marco Pavone is with 
the Department of Aeronautics \& Astronautics, Stanford University. Stanford, USA (e-mail: pavone@stanford.edu).}}
\else
    \author{Thomas Lew, \IEEEmembership{Member, IEEE}, 
    Riccardo Bonalli, and 
    Marco Pavone, \IEEEmembership{Member, IEEE}
\thanks{The NASA University Leadership Initiative (grant \#80NSSC20M0163), Toyota Research Institute, and the Agence Nationale de la Recherche (grant ANR-22-CE48-0006) provided funds to assist the authors with their research, but this article solely reflects the opinions and conclusions of its authors. }
\thanks{Thomas Lew is with Toyota Research Institute, Los Altos, USA (e-mail: thomas.lew@tri.global). }
\thanks{Riccardo Bonalli is with the Laboratory of Signals \& Systems, Paris-Saclay University, CNRS, CentraleSup\'{e}lec, France (e-mail: riccardo.bonalli@l2s.centralesupelec.fr).}
\thanks{Marco Pavone is with 
the Department of Aeronautics \& Astronautics, Stanford University. Stanford, USA (e-mail: pavone@stanford.edu).}}
\fi

\maketitle
\ifarxiv
    \thispagestyle{empty}
    \pagestyle{empty}
\fi

\begin{abstract}
We study the convex hulls of reachable sets of 
nonlinear systems 
with bounded disturbances 
and uncertain initial conditions. 
Reachable sets play a critical role in control, but remain notoriously challenging to compute, and existing over-approximation tools tend to be conservative or computationally expensive. In this work, we characterize the convex hulls of reachable sets as the convex hulls of solutions of an ordinary differential equation with initial conditions on the sphere.
This finite-dimensional characterization unlocks an efficient sampling-based estimation algorithm to accurately over-approximate reachable sets. We also study the structure of the boundary of the reachable convex hulls and derive error bounds for the estimation algorithm. We give applications to neural feedback loop analysis and robust MPC.

\ifarxiv
    
    \vspace{2mm}
    
    \textbf{Code}: \href{https://github.com/StanfordASL/chreach}{https://github.com/StanfordASL/chreach}
\else
    
    \vspace{1mm}
    
    \textbf{Code}: \href{https://github.com/StanfordASL/chreach}{https://github.com/StanfordASL/chreach}
    
    \vspace{1mm}
    
    \textbf{Appendix}: \href{https://arxiv.org/abs/2303.17674}{https://arxiv.org/abs/2303.17674}
    
    \vspace{1mm}
    
    \begin{IEEEkeywords}
    Reachability Analysis, Geometric Inference, Optimal Control, 
    Smoothing methods
    \end{IEEEkeywords}
\fi
\end{abstract}

\section{Introduction}
\label{sec:problem_formulation}
\IEEEPARstart{F}{orward} reachability analysis plays a critical role in control theory and robust controller design. Generally, it entails characterizing all states that a system can reach at any time in the future.  As such, reachability analysis allows certifying the performance of feedback loops under disturbances and designing controllers with robustness properties. In robust model predictive control (MPC) for instance, it is used to construct tubes around nominal state trajectories to ensure that constraints are satisfied in the presence of external disturbances.

In this work, we study the following reachability analysis problem. 
Let $n\in\bN$ be the state dimension, $f:\R\times\R^n\to\R^n$ and $g:\R\times\R^n\to\R^{n\times n}$ be functions for the dynamics, and $\W,\X_0\subset\R^n$ be bounded sets of disturbances and initial conditions. 
Given a time $T>0$ and an initial state $x^0\in\X_0$, we consider systems defined by the ordinary differential equation (ODE)
\begin{align}
\label{eq:ODE}
\dot{x}(t)&=f(t,x(t))+g(t,x(t))w(t),\quad t\in[0,T],
\\
x(0)&=x^0,
\nonumber
\end{align}
where the disturbances $w:[0,T]\to\W$ are assumed to be integrable ($w\in L^\infty([0,T],\W)$. %
Under standard smoothness assumptions (see Assumptions \ref{assumption:f}-\ref{assumption:W}), the ODE  \eqref{eq:ODE} has a unique solution, denoted by $x_{(w,x^0)}(\cdot)$. 
For any time $t\in[0,T]$, we define the reachable set 
\begin{align}\label{eq:Y_reachable_set}
\X_t=
\big\{
x_{(w,x^0)}(t): 
w\in L^\infty([0,T],\W),
\, 
x^0\in\X_0
\big\}
\end{align}
that characterizes all states that are reachable at time $t$ for some disturbance $w$ %
and initial state $x^0$. %

\begin{figure}[!t]
\includegraphics[width=1\linewidth]{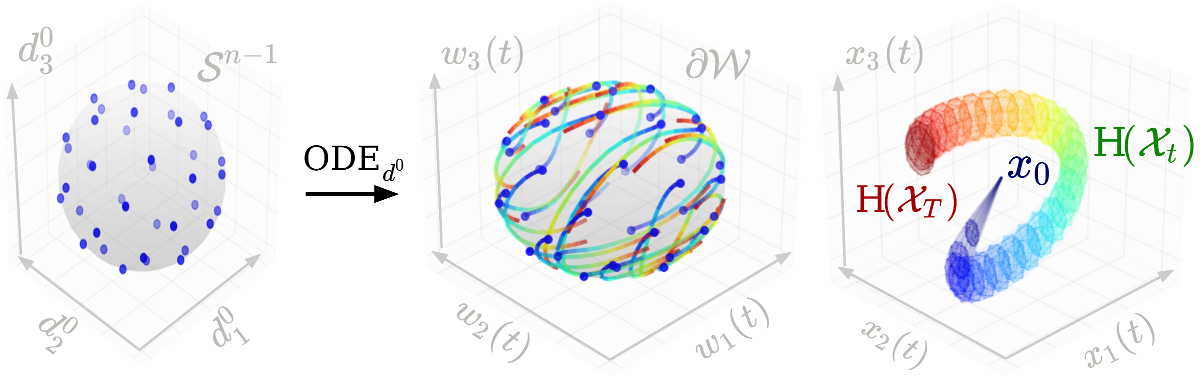}
\caption{The convex hulls $\hull(\X_t)$ of the reachable sets $\X_t$ can be computed by (a) integrating an augmented ODE  (\PMPODE{d^0}) for different directions $d^0$ on the sphere $\cS^{n-1}$, and (b) taking the convex hulls of the resulting extremal state trajectories $x_{d^0}$.}
\end{figure}

Reachability analysis of nonlinear dynamical systems is challenging. Indeed, from  \eqref{eq:Y_reachable_set}, computing the reachable sets seemingly requires evaluating an infinite number of state trajectories for all possible disturbances and initial conditions.\footnote{Each reachable set $\X_t$ is the image of an infinite-dimensional set. Indeed, by defining the maps $g_t(w,x^0)=x_{(w,x^0)}(t)\in\R^n$, each reachable set is expressed as $\X_t=g_t(L^\infty([0,T],\W)\times\X_0)$.} %
Due to the complexity of the problem, many existing tools seek \textit{convex over-approximations} of reachable sets. Yet, current methods tend to be conservative or computationally expensive, see Sections \ref{sec:related_work} and \ref{sec:numerical}. This motivates the study of properties of convex hulls of reachable sets that can simplify their estimation.

Our main contribution is a new characterization of 
the convex hulls of reachable sets of dynamical systems of the form \eqref{eq:ODE}, under smoothness assumptions of $f$, $g$, $\W$, and $\X_0$ (see Assumptions \ref{assumption:f}-\ref{assumption:X0}).  Specifically, denoting by $\hull(A)$ the convex hull of a set $A\subset\R^n$, we show that 
\begin{align}\label{eq:hull_Y_is_hull_F}
\hull(\X_t)=\hull(F(\cS^{n-1},t))
\ \text{ for all }\ t\in[0,T],
\end{align}
where $F(d^0,t)$ is the solution to an ODE with initial conditions $d^0$ on the sphere $\cS^{n-1}\subset\R^n$, %
see Theorem \ref{thm:hull_F} and \PMPODE{d^0}. Thus, the convex hulls of the reachable sets can now be computed as the convex hulls of solutions of an ODE for different initial conditions $d^0\in\cS^{n-1}$. Equation \eqref{eq:hull_Y_is_hull_F} represents a significantly simpler (finite-dimensional) characterization of the convex hulls. %

This result unlocks \rev{an} 
approach (Algorithm \ref{alg:1}) to efficiently estimate the convex hulls $\hull(\X_t)$ by integrating an ODE from a sample of initial conditions. 
This approach allows efficiently tackling  challenging problems such as analyzing the robustness of neural network controllers \rev{(see Section \ref{sec:numerical}). %
This characterization} also informs the design of a robust MPC controller (see Algorithm \ref{alg:mpc}) 
that we
demonstrate on a robust \rev{spacecraft control task}. 

This work extends preliminary results in \cite{LewBonalliEtAl2023} by:
\begin{itemize}[leftmargin=3mm]
\item considering time-varying disturbance-affine dynamics,
\item accounting for uncertain initial conditions,
\item studying the boundary of the convex hulls of reachable sets to obtain tighter error bounds (Section \ref{sec:error_bounds}),
\item analyzing problems with rectangular uncertainty sets (Section \ref{sec:rectangular}), and with disturbances that only affect a subset of directions of the statespace (Section \ref{sec:noninvertible}),
\item providing additional numerical results (Section \ref{sec:numerical}), 
\end{itemize}
The main characterization (see Theorem \ref{thm:hull_F} and \PMPODE{d^0}) also does not rely on the projection step from \cite{LewBonalliEtAl2023} anymore, simplifying the evaluation of solutions to \PMPODE{d^0}.

\subsubsection*{Outline}
In Section \ref{sec:related_work}, we review prior work. 
In Section \ref{sec:preliminary}, we introduce notations and preliminary results. 
In Sections \ref{sec:structure}-\ref{sec:structure_proof}, we state and derive our characterization result of the reachable convex hulls $\hull(\X_t)$ and propose an estimation algorithm (Algorithm \ref{alg:1}). 
In Sections \ref{sec:error_bounds}-\ref{sec:error_bound:proof}, we study the boundary of $\hull(\X_t)$ and derive error bounds for Algorithm \ref{alg:1}. 
We study problems with rectangular uncertainty sets and disturbances that only affect a subset of the statespace in Sections \ref{sec:rectangular}-\ref{sec:rectangular_proof} and  \ref{sec:noninvertible}, respectively. 
We provide numerical results in Section \ref{sec:numerical} and conclude in Section \ref{sec:conclusion}. The appendix contains additional details about theoretical and numerical results.

\section{Related work}\label{sec:related_work}
\subsubsection{Numerical methods}
The forward reachable sets of nonlinear systems are generally difficult to characterize. For this reason, many existing approaches seek \textit{convex} over-approximations of the reachable sets, e.g., %
represented as hyper-rectangles \cite{Meyer2021}, \rev{ellipsoids \cite{Kurzhanski2000}}, zonotopes \cite{Althoff2008}, or ellipsotopes \cite{Kousik2023}, 
\rev{see \cite{Althoff2021} for a recent survey that also reviews non-convex approximations.} 
Existing over-approximation methods include techniques based on conservative linearization \cite{Althoff2008}, differential inequalities \cite{Ramdani2009,Scott2013}, and Taylor models \cite{Berz1998,Chen2013}. %
In particular,  
systems with mixed-monotone \cite{meyer2019hscc,Coogan2015,Abate2022} or 
\ifarxiv
\rev{contracting \cite{Maidens2015,Fan2017,SinghMajumdarEtAl2017}} 
\else
\rev{contracting \cite{Maidens2015,Fan2017}} 
\fi
dynamics %
have been extensively studied, as these properties simplify the \rev{computation} of accurate over-approximations. 
To tackle smooth systems, a standard approach consists of linearizing the dynamics and bounding the Taylor remainder using smoothness properties of the dynamics \cite{Althoff2008,koller2018,Yu2013,Leeman2023,Althoff2021}. This method has been widely used in robust MPC but is known to be conservative \cite{LewPavone2020}, see also Section \ref{sec:numerical}. 

Methods that estimate the reachable sets from a sample of state \rev{trajectories \cite{Huang2012,Donz2007}} have recently found significant interest \cite{ThorpeL4DC2021,LewPavone2020,LewJansonEtAl2022}. However, the sample complexity of these methods increases with the number of uncertain variables. For systems with disturbances as in \eqref{eq:ODE}, the number of uncertain variables (and thus the approximation error) increases as the discretization is refined. Thus, naive sampling-based methods are not well-suited for reachability of systems with continuous-time disturbances, see also Section \ref{sec:numerical} for comparisons. 

\subsubsection{On geometry and optimal control}
The deep connection between geometry, reachability analysis, and optimal control is well-known \cite{Agrachev2004,BonnardChyba2003,Trelat2012}. It was previously used in \cite{Krener1989,Schttler2012} to characterize the true reachable sets of dynamical systems of dimensions $n\leq 4$ with scalar control inputs (control inputs in \cite{Krener1989} take the role of disturbances in \eqref{eq:ODE}). %
Our results also leverage geometric arguments and the Pontryagin Maximum Principle (PMP), but apply to a different class of dynamical systems with arbitrary state dimensionality $n$ and the same number of disturbances and states. With an appropriate relaxation scheme inspired from \cite{Silva2010}, these results can be approximately generalized to problems with a smaller number of disturbances than states, see Section \ref{sec:noninvertible}.  Importantly, by studying the convex hulls of the reachable sets, our results apply to arbitrarily-large times $T$ and sets $(\X_0,\W)$, and thus do not rely on a small-time assumption as in \cite{Krener1989} or on a set $\X_0$ small-enough as in \cite{Reiig2007}. In contrast, \cite{Krener1989} and \cite{Reiig2007} study the structure of the true reachable sets that may self-intersect for times $T$ too large, see Example \ref{example:selfintersect}.

Our derivations start with the idea of searching for boundary states that are the furthest in different directions %
(see \OCP{d}). This approach is standard in the setting with linear dynamics \rev{where  
reachable} sets are convex \cite{Pecsvaradi1971,Schttler2012},\rev{\cite[Chap.1.4]{Kurzhanski2014}}. However, in the nonlinear \rev{case, reachable} sets may be non-convex, and finding the extremal disturbance trajectories that generate boundary states %
requires solving optimal control problems (OCPs) or their corresponding boundary-value problems (BVPs) stemming from the PMP (see \BVP{d}). Such approaches were explored in \cite{Gornov2015} and \cite{Baier2009}, but remain computationally challenging. %
Our results show that under the right set of assumptions (see Assumptions \ref{assumption:f}-\ref{assumption:X0}), solving OCPs is not necessary and extremal trajectories take a simple form. %
The key is the additional idea of sampling initial values of the adjoint vector. %
Studying the convex hulls \rev{of  reachable} sets  unlocks arguments from convex geometry 
that allow proving 
the exactness of the approach. 

\section{Notations and preliminary results}\label{sec:preliminary}
\subsubsection{Notations}%
Let $a,b\in\R^n, \lambda\in\R$.  
We denote by $a^\top b=\sum_{i=1}^na_ib_i$ the Euclidean inner product, %
by $\|a\|=(\sum_{i=1}^na_i^2)^{1/2}$ the Euclidean norm, %
by $\|a\|_\lambda=(\sum_{i=1}^n|a_i|^\lambda)^{1/\lambda}$ the $\lambda$-norm %
with $\lambda\geq 1$,
by $a\odot b=(a_1b_1,\dots,a_nb_n)$ the elementwise product, %
$a^\lambda=(a_1^\lambda,\dots,a_n^\lambda)$, by $|a|=(|a_1|,\dots,|a_n|)$ the absolute value,    
by $I_n$ the identity matrix of size $n$, 
by $B(x,r)=\{y\in\R^n: \|y-x\|^2\leq r^2\}$ the closed ball of center $x$ and radius $r\geq 0$, and 
by $\cS^{n-1}=\{x\in\R^n: \|x\|^2=1\}$ the unit sphere. 
Given $A,B\subset\R^n$, %
we denote by 
$\Int(A)$, $\bar{A}$, $\partial A=\bar{A}\setminus\Int(A)$, and $A^\comp=\R^n\setminus A$ the interior, closure, boundary, and complement of $A$, 
by $d_A(x)=\inf_{a\in A} \|x-a\|$ the distance from $x$ to $A$, and by 
 \begin{equation}
 \label{eq:hausdorff_distance}
 d_H(A,B)
 =
 \max\left(
 \sup_{x\in A}
 d_B(x), 
 \sup_{y\in B}
 d_A(y)
 \right)
 \end{equation}
the Hausdorff distance between compact sets $A$ and $B$. 

\subsubsection{Convex geometry} \rev{A point in a set $A$  is said to be an extreme point if it is the endpoint of every segment in $A$ that contains it \cite{Grothendieck1973}.} We denote by $\hull(A)$ the convex hull of $A\subset\R^n$ and by    
$\text{Ext}(A)$ the set of extreme points of a compact set $A\subset\R^n$. 
The next result is standard. %
\begin{lemma}[Support hyperplane]\label{lem:hyperplane}
Let $C\subset\R^n$ be a closed and convex set and $x\in\partial C$. Then, there exists a support hyperplane $\{y\in\R^n:d^\top (y-x)=0\}$ defined by some $d\in\cS^{n-1}$ such that $d^\top x\geq d^\top y$ for all $y\in C$. 
\end{lemma}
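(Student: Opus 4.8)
The plan is to establish the supporting hyperplane via the metric projection onto $C$ together with a compactness/limiting argument. Since $x\in\partial C=\bar C\setminus\Int(C)$ and $C$ is closed, we have $x\in C$, and every neighborhood of $x$ meets $C^\comp$. I would therefore choose a sequence $x_k\to x$ with $x_k\notin C$, and project each $x_k$ onto $C$. Because $C$ is nonempty, closed, and convex, the projection $p_k=\Proj_C(x_k)$ exists and is unique, and since $x_k\notin C$ we have $p_k\neq x_k$. This lets me define the normalized directions $d_k=(x_k-p_k)/\norm{x_k-p_k}\in\cS^{n-1}$, which are the candidate normals.

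The key inequality is the variational characterization of the projection: for all $y\in C$,
\begin{equation*}
(x_k-p_k)^\top(y-p_k)\leq 0,
\end{equation*}
which, after dividing by $\norm{x_k-p_k}>0$, reads $d_k^\top y\leq d_k^\top p_k$ for every $y\in C$. Next I would pass to the limit. Since $x\in C$, the distance satisfies $\norm{x_k-p_k}=d_C(x_k)\leq\norm{x_k-x}\to 0$, so by the triangle inequality $p_k\to x$. The directions $d_k$ live in the compact set $\cS^{n-1}$, so a subsequence converges to some $d\in\cS^{n-1}$. Taking limits in $d_k^\top y\leq d_k^\top p_k$ along this subsequence yields $d^\top y\leq d^\top x$ for all $y\in C$, i.e.\ $d^\top x\geq d^\top y$ for all $y\in C$. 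Finally, $d^\top(x-x)=0$ shows $x$ lies on the hyperplane $\{y:d^\top(y-x)=0\}$, which is therefore a support hyperplane of $C$ at $x$, completing the argument.

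The only genuinely substantive ingredients are the existence and uniqueness of the projection onto a closed convex set and the projection inequality above; both are standard consequences of the parallelogram law and convexity, so I would cite them rather than reprove them. The one point requiring a little care is ensuring $p_k\to x$ (so that the limiting inequality is anchored at $x$ rather than at some other boundary point), which follows cleanly from $x\in C$ forcing $d_C(x_k)\leq\norm{x_k-x}$. The rest is a routine compactness extraction on the sphere, so I do not anticipate any real obstacle.
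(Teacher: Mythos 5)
Your proof is correct and complete: the projection-based argument (obtuse-angle inequality for $\Proj_C$, plus compactness of $\cS^{n-1}$ to extract a limiting normal anchored at $x$ via $p_k\to x$) is the standard derivation of the supporting hyperplane theorem. The paper itself states this lemma without proof, citing it as standard, so there is nothing to compare against; your argument is exactly the one a reference would supply.
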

The next result follows from the Krein-Milman theorem \cite{Grothendieck1973} and is also well-known, see \cite[Lemmas 6 and 7]{LewBonalliEtAl2023}.
\begin{lemma}\label{lem:extreme_points_hull}\label{lem:hull_is_hull_of_subset}
Let $A\subset\R^n$ be a compact set. Then, $\text{Ext}(\hull(A))\subseteq A$ and 
$\hull(A)=\hull(\partial A)=\hull(\partial\hull(A)\cap A)$.
\end{lemma}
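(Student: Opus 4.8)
The plan is to establish the three assertions separately, using only Carathéodory's theorem and the Krein–Milman theorem, both valid for compact sets in $\R^n$. As a preliminary I would record that $\hull(A)$ is compact: by Carathéodory, every point of $\hull(A)$ is a convex combination of at most $n+1$ points of $A$, so $\hull(A)$ is the continuous image of the compact set consisting of the weight simplex times $A^{n+1}$, hence compact. This guarantees that $\hull(A)$ possesses extreme points and that Krein–Milman is applicable.

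First I would prove $\text{Ext}(\hull(A))\subseteq A$. Taking $x\in\text{Ext}(\hull(A))$, Carathéodory lets me write $x=\sum_i\lambda_i a_i$ with $a_i\in A$, $\lambda_i>0$, and $\sum_i\lambda_i=1$. Since $x$ is extreme and each $a_i\in A\subseteq\hull(A)$, the defining property of an extreme point forces $a_i=x$ for every $i$ with $\lambda_i>0$, so $x=a_i\in A$.

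Next, for $\hull(A)=\hull(\partial\hull(A)\cap A)$, write $C=\hull(A)$. The inclusion $\hull(\partial C\cap A)\subseteq C$ is immediate from $\partial C\cap A\subseteq A$. For the converse I would invoke Krein–Milman, $C=\hull(\text{Ext}(C))$, and combine two observations: extreme points of a convex set always lie on its boundary, so $\text{Ext}(C)\subseteq\partial C$ (an interior point is the midpoint of two nearby points of $C$, hence not extreme, and if $\Int(C)=\emptyset$ then $\partial C=C$); and $\text{Ext}(C)\subseteq A$ by the previous step. Hence $\text{Ext}(C)\subseteq\partial C\cap A$ and $C=\hull(\text{Ext}(C))\subseteq\hull(\partial C\cap A)$. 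For $\hull(A)=\hull(\partial A)$, the inclusion $\hull(\partial A)\subseteq\hull(A)$ follows from $\partial A\subseteq\bar A=A$; for the reverse it suffices to show $A\subseteq\hull(\partial A)$. This is clear for $x\in\partial A$, and for $x\in\Int(A)$ I would fix a direction $v\in\cS^{n-1}$, set $t^+=\sup\{t\geq0:x+tv\in A\}$ and $t^-=\sup\{t\geq0:x-tv\in A\}$ (finite since $A$ is bounded, positive since $x\in\Int(A)$), and check that $x\pm t^\pm v\in\partial A$, so that $x$, being a convex combination of these two boundary points, lies in $\hull(\partial A)$.

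The routine parts are the first two: they are essentially direct applications of Carathéodory and Krein–Milman. The step requiring the most care is the last one, where I must verify rigorously that the exit points $x\pm t^\pm v$ belong to $\partial A$ — they lie in $A$ by closedness, while arbitrarily close points further along the ray leave $A$ by maximality of $t^\pm$, placing them in $\overline{A^\comp}$ and hence in $\partial A=\bar A\setminus\Int(A)$ — and that the degenerate case $\Int(A)=\emptyset$, where $A=\partial A$, is handled trivially.
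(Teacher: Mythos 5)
Your proof is correct, and it follows the same route the paper indicates: the paper gives no proof of this lemma, merely noting that it follows from the Krein--Milman theorem and citing Lemmas 6 and 7 of the authors' earlier work, which is exactly the Krein--Milman/Carath\'eodory argument you spell out in full (including the only mildly delicate point, that the exit points $x\pm t^{\pm}v$ land in $\partial A$). No gaps.
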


\subsubsection{Differential geometry} 
Let $\M\subseteq\R^n$ be a $k$-dimensional submanifold. 
Equipped with the induced metric from the ambient Euclidean norm $\|\cdot\|$, $\M$ is a Riemannian submanifold. %
For any $x\in\M$, $T_x\M$ and $N_x\M$ denote the tangent and normal spaces of $\M$\cite{Lee2012}, respectively, which we view as linear subspaces of $\R^n$.  

Given a map $F:\R^m\to\R^n$ and $x,v,w\in\R^n$, $\dd F_x$ %
denotes the first-order differential of $F$ at $x$, with $\dd F_x(v)=\sum_{i=1}^m\frac{\partial F}{\partial x_i}(x)v_i$. %
 A differentiable map $F$ is a submersion if $\dd F_x:T_x\R^m\to T_{F(x)}\R^n$ is surjective for all $x\in\R^m$, and $F$ is a diffeomorphism if it is a bijection and its inverse is differentiable. %
For any $(t,x)\in[0,T]\times\R^n$, $\nabla f(t,x)\in\R^{n\times n}$ denotes the Jacobian matrix of $f(t,\cdot)$ at $x$ in Euclidean coordinates, and $\nabla g(t,x)\in\R^{n\times n\times n}$ is the $3$-tensor with entries $[\nabla g(t,x)]_{ijk}=(\partial g_{ij}(t,x) / \partial x_k)$. We define $\nabla g(t,x)v\in\R^{n\times n}$ with $[\nabla g(t,x)v]_{ik}=\sum_j[\nabla g(t,x)]_{ijk}v_j$  for any $v\in\R^m$.

\begin{figure}[!t]
\centering
\includegraphics[width=1\linewidth]{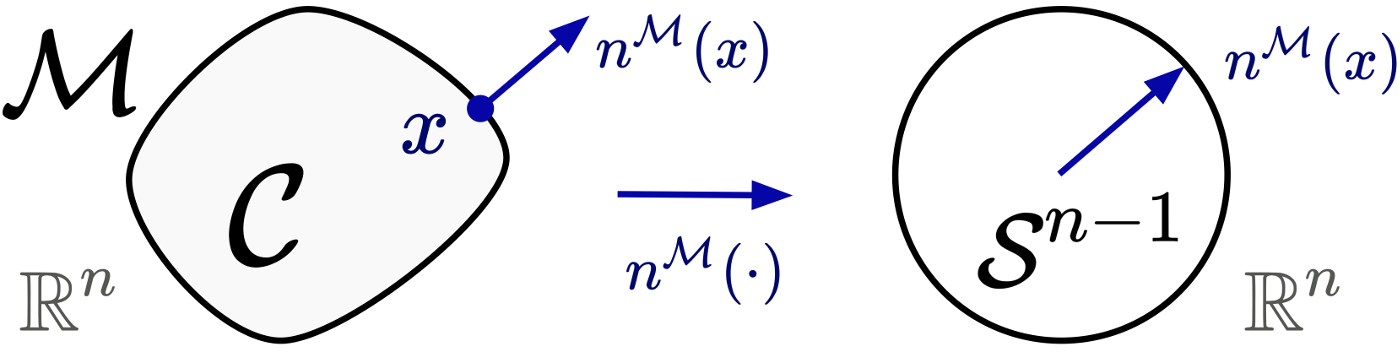}
\caption{Gauss map $n^\mathcal{M}:\mathcal{M}\to\cS^{n-1}$ of an ovaloid $\mathcal{M}=\partial\mathcal{C}$.}
\label{fig:gauss_map}
\vspace{-2mm}
\end{figure}
\subsubsection{Ovaloids and Gauss maps}
An $(n-1)$-dimensional submanifold $\M\subset\R^n$ is called a hypersurface. Let $\C\subset\R^n$ be such that $\M=\partial\C$ is a hypersurface. The \textit{Gauss map} of $\M$ is the map $n^{\M}:\M\to\cS^{n-1}$ defined such that $n^\M(x)$ %
is the unit-norm outward-pointing normal vector of $\M$ at $x\in\M$. 
For any $x\in\M$, the shape operator (or Weingarten map) is the linear map $S_x:T_x\M\to T_x\M$ defined by $S_x(v)=\nabla n(x) v$. The $(n-1)$ eigenvalues of the shape operator are called the \textit{principal curvatures} of $\M$. 
$\M$ is said to be an \textit{ovaloid} (or  of \textit{strictly positive curvature}), if all the principal curvatures of $\M$ are strictly positive. 
If $\M$ is an ovaloid, then 
 the Gauss map  $n^{\M}:\M\to\cS^{n-1}$
is a diffeomorphism \cite{Rauch1974}, 
 $\M$ is the boundary of a bounded strictly convex set $\mathcal{C}$ such that $\partial\mathcal{C}=\M$ \cite{Rauch1974}, and for  any $x\in\M$,
\begin{equation}\label{eq:gauss_map_ineq}
n^{\M}(x)^\top(x-y)\geq 0\ \ \text{for all } y\in\mathcal{C}.
\end{equation}

\begin{tcolorbox}[
title={\textit{Example \customlabel{example:gauss_maps}{1}(Gauss maps of common sets)}},
colbacktitle=gray!20, 
coltitle=black,
breakable,
boxrule=3pt,
colframe=gray!20, %
boxsep=0pt, before skip=0pt, after skip=0pt]
The Gauss maps of the boundaries of common ovaloids, such as the boundaries of balls and ellipsoids that are often used to represent $\W$ and $\X_0$, admit closed-form expressions.  Given $\bar{x}\in\R^n$ and $r>0$, the Gauss map of the boundary of the ball $B(\bar{x},r)$ and its inverse are
$$
n(x)=(x-\bar{x})/\|x-\bar{x}\|,
\ \ 
n^{-1}(d)=\bar{x}+rd.
$$
Given an ellipsoidal set $\mathcal{E}(\bar{x},Q)=\{x\in\R^n:(x-\bar{x})^\top Q^{-1}(x-\bar{x})\leq 1\}$  with  $Q\in\R^{n\times n}$ a positive definite matrix, the Gauss map of $\partial\mathcal{E}$ is 
$$
n(x)=\frac{Q^{-1}(x-\bar{x})}{\|Q^{-1}(x-\bar{x})\|},
\ \,  
n^{-1}(d)=\bar{x}+\frac{Qd}{\sqrt{d^\top Qd}}.
$$
For $\lambda>1$, the set 
$C_\lambda=\{x\in\R^n:\|x\|_\lambda^2\leq 1\}$ 
has boundary $\M=\partial C_\lambda=\{x\in\R^n:\|x\|_\lambda^2=1\}$. Its Gauss map and inverse Gauss map are
$$
n(x)=\frac{x\odot|x|^{\lambda-2}}{\||x|^{\lambda-1}\|},
\ 
n^{-1}(d)=\frac{d\odot |d|^{\frac{2-\lambda}{\lambda-1}}}{\||d|^{\frac{1}{\lambda-1}}\|_\lambda}.
$$
More generally, if $\M=\partial C$ is an ovaloid and is the level set of a smooth function $h:\R^n\to\R$,
$$\M=\{x\in\R^n:h(x)=1\},$$
then the Gauss map of $\M$ is \cite[Chapter 8]{Lee2018}
\begin{equation}\label{eq:gauss_map_level_set}
n(x)
=\nabla h(x) / \|\nabla h(x)\|.
\end{equation}
Moreover, under the additional assumption that  
$h\left(\nabla h^{-1}(n(x))\right)=1/\|\nabla h(x)\|^2$ for all $x\in\M$ (this assumption holds for the three examples
above, see Remark \ref{remark:gauss_map_level_set:inv:condition} in Appendix \ref{apdx:sec:gauss_map_level_set:inv} for details),
\begin{equation}\label{eq:gauss_map_level_set:inv}
n^{-1}(d)=\nabla h^{-1}\left(d\, / \sqrt{h(\nabla h^{-1}(d)}\right)
\end{equation}
(see Lemma \ref{lem:gauss_map_level_set:inv} in Appendix \ref{apdx:sec:gauss_map_level_set:inv} for details), from which one rederives the previous equations. 
%
%
\end{tcolorbox}

%
\section{The structure of $\hull(\X_t)$}\label{sec:structure}
Our results rely on the following four  assumptions.
\begin{tcolorbox}[
title={\textit{Assumption \customlabel{assumption:f}{1}($f$ and $g$ are smooth and integrable)}
},
coltitle=black,
boxrule=2pt,
colframe=blue!6, %
enhanced, colback=blue!2, boxsep=0pt, left=6pt, right=6pt]
For all $t\in[0,T]$, 
$f(t,\cdot)$ and $g(t,\cdot)$ are continuously differentiable, %
globally Lipschitz, and their Jacobians \scalebox{0.95}{$(\nabla f(t,\cdot),\nabla g(t,\cdot))$} are Lipschitz.
Moreover, for any $x\in\R^n$, $f(\cdot,x)$ and $g(\cdot,x)$ are integrable. %
\end{tcolorbox}
Assumption \ref{assumption:f} is a standard smoothness assumption \cite{Lorenz2005,Cannarsa2006} guaranteeing the existence and uniqueness of solutions to the ODE in \eqref{eq:ODE} and \PMPODE{d^0}. By multiplying $f$ and $g$ with a smooth cutoff function whose arbitrarily large support contains states of interest, the Lipschitzianity assumptions are always satisfied if $f,g\in C^2$. 
\begin{tcolorbox}[
title={\textit{Assumption \customlabel{assumption:g}{2}($g$ is invertible)}
\phantom{$a^1$}
},
coltitle=black,
boxrule=2pt,
colframe=blue!6, %
enhanced, colback=blue!2, boxsep=0pt, left=6pt, right=6pt]
$g(t,x)$ %
is invertible for all $(t,x)\in[0,T]\times\R^n$.
\end{tcolorbox}
Assumption \ref{assumption:g} %
does not hold for problems with fewer disturbances than states. It is relaxed in Section \ref{sec:noninvertible}.

\begin{tcolorbox}[
title={\textit{Assumption \customlabel{assumption:W}{3}($\W$ is convex with smooth boundary)}\phantom{${}^1$}
},
coltitle=black,
boxrule=2pt,
colframe=blue!6, %
enhanced, colback=blue!2, boxsep=0pt, left=6pt, right=6pt]
$\W$ is compact and its boundary $\partial\W$ is an ovaloid. 
Equivalently, $\partial\W$ is a smooth $(n-1)$-dimensional submanifold of strictly positive curvature.

\end{tcolorbox}
\begin{tcolorbox}[
title={\textit{Assumption \customlabel{assumption:X0}{4}($\X_0$ is either a singleton or is convex with smooth boundary)}
\phantom{$a^1$}
},
coltitle=black,
boxrule=2pt,
colframe=blue!6, %
enhanced, colback=blue!2, boxsep=0pt, left=6pt, right=6pt]
$\X_0$ is compact. In addition, either%
\\[-5mm]
\begin{itemize}[leftmargin=13mm]
\item[(A4a)] $\X_0=\{x^0\}$ for some $x^0\in\R^n$, or
\item[(A4b)] $\partial\X_0$ is an ovaloid. 
\end{itemize}
\end{tcolorbox}
\setcounter{assumption}{4}
Assumption \ref{assumption:W} and \AssumXb hold in particular if $\W$ and $\X_0$ are spheres or ellipsoids, which are commonly used in applications. 
These assumptions imply that $\W$ and $\X_0$ are strictly convex.  
They are relaxed in Section \ref{sec:rectangular}. 

Assuming that $\W$ is convex is standard to prove that the reachable sets are compact.%
\begin{lemma}[$\X_t$ is compact]\label{lem:Y_is_compact}
Assume that $f$, $g$, $\W$, and $\X_0$ satisfy Assumptions \ref{assumption:f}, \ref{assumption:W}, and \ref{assumption:X0}. Then, for any $t\in[0,T]$, the reachable set $\X_t=\eqref{eq:Y_reachable_set}$ is compact.
\end{lemma}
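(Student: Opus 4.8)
The plan is to show that $\X_t\subset\R^n$ is both bounded and closed, since in finite dimensions these two properties together characterize compactness.

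For boundedness, I would exploit the global Lipschitz hypotheses of Assumption~\ref{assumption:f}, which yield the linear growth bounds $\|f(t,x)\|\leq\|f(t,0)\|+L_f\|x\|$ and $\|g(t,x)\|\leq\|g(t,0)\|+L_g\|x\|$, together with the compactness of $\W$ (so that $\|w(t)\|\leq M_\W$ for all admissible $w$) and of $\X_0$. Writing any solution of \eqref{eq:ODE} in integral form and applying the integral Grönwall inequality then produces a bound on $\|x_{(w,x^0)}(t)\|$ that is uniform over all $w\in L^\infty([0,T],\W)$ and $x^0\in\X_0$; here I only need $f(\cdot,0),g(\cdot,0)$ to be integrable, as guaranteed by Assumption~\ref{assumption:f}. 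Hence $\X_t$ is bounded, and in fact the entire family of trajectories remains in a fixed ball $B(0,R)$ on $[0,T]$.

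For closedness, I would take a sequence $x_k=x_{(w_k,x_k^0)}(t)\in\X_t$ with $x_k\to\bar x$ and argue that $\bar x\in\X_t$. First, compactness of $\X_0$ (Assumption~\ref{assumption:X0}) lets me extract, up to a subsequence, $x_k^0\to\bar x^0\in\X_0$. Next, since each $w_k$ takes values in the compact set $\W$, the sequence is bounded in $L^\infty([0,T],\R^n)=(L^1)^*$, so by the Banach--Alaoglu theorem, and metrizability of bounded sets in the weak-$*$ topology, I can extract $w_k\overset{*}{\rightharpoonup}\bar w$. The convexity of $\W$ (Assumption~\ref{assumption:W}) is essential at this point: testing the pointwise inequalities $d^\top w_k(s)\leq\sup_{y\in\W}d^\top y$ against test functions $\mathbf{1}_E\,d\in L^1$ and passing to the limit shows $d^\top\bar w(s)\leq\sup_{y\in\W}d^\top y$ a.e.\ for every $d$, hence $\bar w(s)\in\W$ for a.e.\ $s$, so $\bar w\in L^\infty([0,T],\W)$ is an admissible disturbance.

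It then remains to pass to the limit in the trajectories, which is the crux of the argument. The uniform bound from the first step, combined with the linear growth of $f,g$ and integrability of $f(\cdot,0),g(\cdot,0)$, gives an integrable envelope $\|\dot x_k(s)\|\leq\rho(s)$ independent of $k$; absolute continuity of $s\mapsto\int_0^s\rho$ then yields uniform equicontinuity, so Arzelà--Ascoli provides a further subsequence with $x_k\to\bar x(\cdot)$ uniformly on $[0,T]$. Passing to the limit in $x_k(t)=x_k^0+\int_0^t\big(f(s,x_k(s))+g(s,x_k(s))w_k(s)\big)\,\dd s$, the drift term converges by uniform convergence and continuity of $f$. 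The main obstacle is the bilinear term $\int_0^t g(s,x_k(s))w_k(s)\,\dd s$, where $w_k$ converges only weakly-$*$: I would split it as $\int_0^t\big[g(s,x_k(s))-g(s,\bar x(s))\big]w_k(s)\,\dd s+\int_0^t g(s,\bar x(s))w_k(s)\,\dd s$. The first integral vanishes using the Lipschitz bound on $g$, the uniform convergence $x_k\to\bar x$, and $\|w_k\|_\infty\leq M_\W$; the second converges to $\int_0^t g(s,\bar x(s))\bar w(s)\,\dd s$ because $s\mapsto g(s,\bar x(s))\mathbf{1}_{[0,t]}(s)$ lies in $L^1$ (being dominated by $\|g(\cdot,0)\|+L_g R$) and is therefore a valid test function against the weak-$*$ convergent $w_k$. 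This identifies $\bar x=x_{(\bar w,\bar x^0)}(\cdot)$ as a genuine solution, so $\bar x=\bar x(t)\in\X_t$, proving closedness. The recurring theme is that the $w$-affine structure of \eqref{eq:ODE} together with convexity of $\W$ makes the solution map sequentially continuous from weakly-$*$ convergent disturbances to uniformly convergent trajectories.
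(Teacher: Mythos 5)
Your proof is correct and follows essentially the same route as the paper, which only gives a one-line sketch (Gr\"onwall's inequality for the uniform bound on trajectories) and then delegates the rest to the standard compactness theorem for reachable sets of control-affine systems with compact convex input sets \cite[Theorem 7]{Trelat2023} --- your closedness argument (weak-$*$ extraction of the disturbances via Banach--Alaoglu, convexity of $\W$ to keep the limit admissible, Arzel\`a--Ascoli plus the affine-in-$w$ structure to pass to the limit in the bilinear integral term) is precisely the content of that cited result, written out in full. The only detail worth making explicit is that concluding $\bar w(s)\in\W$ for a.e.\ $s$ from the half-space inequalities requires intersecting over a countable dense set of directions $d$ (using that the closed convex set $\W$ is the intersection of the corresponding supporting half-spaces), which is standard.
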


Lemma \ref{lem:Y_is_compact} is standard, see e.g. \cite{Trelat2023} (from Gr\"onwall's inequality, state trajectories are uniformly bounded thanks to Assumptions \ref{assumption:f}, \ref{assumption:W}, and \ref{assumption:X0}, so Lemma \ref{lem:Y_is_compact} follows from \cite[Theorem 7]{Trelat2023} with minor adaptations).

Thanks to Assumptions \ref{assumption:W} and \AssumXb, the \textit{Gauss maps}
\begin{equation}\label{eq:gauss_map}
n^{\partial\W}:\partial\W\to\cS^{n-1},
\quad n^{\partial\X_0}:\partial\X_0\to\cS^{n-1}
\end{equation} 
of $\partial\W$ and $\partial\X_0$ are diffeomorphisms, see Section \ref{sec:preliminary}. Recall that $n^{\partial\C}(x)$ is the unit outward-pointing normal vector of $\partial\C$ at $x\in\partial\C$ for $\C=\W$ and $\C=\X_0$, such that for any $w\in\partial\W$ and $v\in\W$,
\begin{equation}\label{eq:gauss_map_ineq:W}
n^{\partial\W}(w)^\top(w-v)\geq 0,
\end{equation}
and similarly for $\X_0$. 
If $\W=B(0,r)$ is a ball, then $n^{\partial\W}(w)=\frac{w}{\|w\|}$ and $(n^{\partial\W})^{-1}(d)=rd$, see Example \ref{example:gauss_maps}. %

Next, we state our main characterization result. %
Given any direction $d^0\in\cS^{n-1}$, we define the augmented ODE 
\begin{tcolorbox}[
title={\center\customlabel{PMPODE}{$\textbf{ODE}_{d^0}$}},
coltitle=black,
boxrule=2pt,
colframe=blue!6, %
enhanced, colback=blue!2, boxsep=1pt, left=-2pt, right=3pt]
\vspace{-3mm}
\begin{align}
\dot{x}(t)&=f(t,x(t))+g(t,x(t))w(t),
\quad  t\in[0,T],
\nonumber
\\
\dot{p}(t)&=
-(\nabla f(t,x(t))\,{+}\,\nabla g(t,x(t))w(t))^\top p(t),
\label{eq:pmpode:p_dot}
\\
\label{eq:pmpode:wt}
w(t)&=
(n^{\partial\W})^{-1}\left(\frac{g(t,x(t))^\top p(t)}{\|g(t,x(t))^\top p(t)\|}\right),
\\
x(0)&=
\begin{cases}
x^0
  &\hspace{-2mm}
  \text{if }\X_0=\{x^0\},
\\
(n^{\partial\X_0})^{-1}(d^0)
  &\hspace{-2mm}
  \text{if }\partial\X_0\text{ is an ovaloid},
\end{cases}
\nonumber
\\
p(0)&=d^0,
\nonumber
\end{align}
\end{tcolorbox}

\noindent which has a unique 
solution $(x,p)_{d^0}\in C([0,T],\R^{2n})$ thanks to Assumptions \ref{assumption:f}-\ref{assumption:X0}, from standard results on solutions of ODEs \cite{Lee2012}. %
Note that $g(t,x(t))^\top p(t)$ is always non-zero under Assumptions \ref{assumption:f}-\ref{assumption:X0} (see Lemma \ref{lem:singular_arcs}), so \eqref{eq:pmpode:wt} is well-defined. 
The next result characterizes the convex hulls of the reachable sets $\hull(\X_t)$ as the convex hull of solutions to \PMPODE{d^0} for all $d^0\in\cS^{n-1}$. 

\vspace{1mm}

\begin{tcolorbox}[
title={\textit{Theorem \customlabel{thm:hull_F}{1}(Convex hulls of reachable sets $\hull(\X_t)$)}\phantom{${}^1$}},
colbacktitle=blue!10, 
coltitle=black,
boxrule=3pt,
colframe=blue!10, %
enhanced, colback=blue!4, boxsep=0pt, left=6pt, right=6pt]
Assume that $f$, $g$, $\W$, and $\X_0$ satisfy Assumptions \ref{assumption:f}-\ref{assumption:X0}. 
Given $d^0\in\cS^{n-1}$, denote 
by $(x_{d^0},p_{d^0})$ the unique solution to \PMPODE{d^0}. 
Define the map
\begin{equation}\label{eq:F}
F:
\ 
\cS^{n-1}\times[0,T]\to\R^n: 
(d^0,t)\mapsto x_{d^0}(t). 
\end{equation}
Then, $\hull(\X_t)=\hull(F(\cS^{n-1},t))$ for all $t\in[0,T]$.
\end{tcolorbox}

\vspace{1mm}

Theorem \ref{thm:hull_F} states that integrating \PMPODE{d^0} for all values of $d^0\in\cS^{n-1}$ (i.e., evaluating $x_{d^0}(t)=F(d^0,t)$ for different directions $d^0$) is sufficient to recover the convex hulls of the reachable sets $\hull(\X_t)$. This characterization significantly simplifies the reachability analysis problem, which is now finite-dimensional and amounts to integrating an ODE from different initial conditions.

\begin{corollary}[Reachable tube]\label{cor:reachable_tube} Assume that $f$, $g$, $\W$, and $\X_0$ satisfy Assumptions \ref{assumption:f}-\ref{assumption:X0} and define $F=\eqref{eq:F}$ as in Theorem \ref{thm:hull_F}. 
Then, for all $t\in[0,T]$,  
\vspace{-1mm}
\begin{equation}\label{eq:reachable_tube}
\hull(\X_t)
=
\hull\Bigg(
\bigcup_{d^0\in\cS^{n-1}}
F(d^0,[0,T])_t
\Bigg),
\vspace{-1mm}
\end{equation}
where $F(d^0,[0,T])_t=F(d^0,t)$.
\end{corollary}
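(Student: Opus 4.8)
The plan is to derive the statement directly from Theorem \ref{thm:hull_F} together with the definition $F(d^0,[0,T])_t = F(d^0,t)$. First I would observe that the union appearing on the right-hand side of \eqref{eq:reachable_tube} is nothing but the image of the sphere under $F(\cdot,t)$: since $F(d^0,[0,T])_t = F(d^0,t)$ by definition, we have
\begin{equation}
\bigcup_{d^0\in\cS^{n-1}} F(d^0,[0,T])_t
=
\{F(d^0,t) : d^0\in\cS^{n-1}\}
=
F(\cS^{n-1},t).
\end{equation}
Taking convex hulls of both sides and invoking Theorem \ref{thm:hull_F}, which asserts $\hull(\X_t)=\hull(F(\cS^{n-1},t))$, immediately yields \eqref{eq:reachable_tube}.

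The only point deserving care is the meaning of the notation $F(d^0,[0,T])_t$ and why integrating over the full horizon suffices to recover every intermediate slice. Here I would emphasize that \PMPODE{d^0} is a \emph{forward} initial-value problem in the augmented state $(x,p)$: both initial conditions $x(0)$ and $p(0)=d^0$ are prescribed at $t=0$, and no terminal data enters. Consequently, by uniqueness of solutions under Assumptions \ref{assumption:f}--\ref{assumption:X0}, the restriction to $[0,t]$ of the solution $(x_{d^0},p_{d^0})$ computed on $[0,T]$ coincides with the solution of the same initial-value problem posed on $[0,t]$. Hence the value $F(d^0,t)=x_{d^0}(t)$ is well-defined and independent of the integration horizon, so a single integration of \PMPODE{d^0} over $[0,T]$ produces $F(d^0,t)$ for every $t\in[0,T]$ at once.

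I do not expect any genuine obstacle: the corollary is a reformulation of Theorem \ref{thm:hull_F} that records a computational consequence rather than a new mathematical fact. Its value is practical, as it shows that the entire reachable tube $\{\hull(\X_t)\}_{t\in[0,T]}$ is recovered from a single family of ODE integrations indexed by the initial directions $d^0\in\cS^{n-1}$, without re-solving for each time $t$. The sole care required is notational bookkeeping together with the forward initial-value-problem observation above, which ensures that the time-$t$ slices read off a single trajectory are meaningful.
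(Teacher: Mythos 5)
Your proposal is correct and matches the paper's treatment: the paper simply notes that the corollary ``directly follows from Theorem \ref{thm:hull_F}'' once the notation $F(d^0,[0,T])_t=F(d^0,t)$ is unwound, which is exactly your argument. Your additional remark that \PMPODE{d^0} is a forward initial-value problem, so one integration over $[0,T]$ yields every time slice, is a correct elaboration of the computational point the paper makes in the discussion following the corollary.
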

Corollary \ref{cor:reachable_tube} directly follows from Theorem \ref{thm:hull_F}. This result states that to recover  the reachable convex hull $\hull(\X_t)$ at any time $t$, %
it suffices to integrate \PMPODE{d^0} over $[0,T]$ only once for each initial direction $d^0\in\cS^{n-1}$. %
This result implies that all the information required to compute the entire convex reachable tube $\mathop{\bigcup}_{t\in[0,T]}\hull(\X_t)$ (e.g., to enforce constraints at all times for robust MPC, see Section \ref{sec:numerical:spacecraft}) is available after evaluating $\hull(\X_T)$. %

\begin{figure}[!t]
 \centering
\begin{tcolorbox}[
title={\textit{Alg. \customlabel{alg:1}{1}(Estimation of reachable set convex hulls)}\phantom{${}^1$}},
colbacktitle=ForestGreen!10, 
coltitle=black,
boxrule=3pt,
colframe=ForestGreen!10, %
colback=ForestGreen!4,
enhanced, boxsep=0pt, left=6pt, right=6pt]
 \vspace{-4mm}
 \begin{minipage}{1\linewidth}
 \begin{algorithm}[H]
 \textbf{Input}: $M$ initial directions $\{d^i\}_{i=1}^M\subset\cS^{n-1}$. %

 \textbf{Output}: Approximation of convex hulls $\hull(\X_t)$. %

 \begin{algorithmic}[1]
 \ForAll {$i=1,\dots,M$}
 \State \smash{$x^i\gets\textrm{Integrate}(\text{\PMPODE{d^i}})$}
\EndFor
 \State\Return $\hull\left(\{x^i(t), i=1,\dots,M\}\right),\ t\in[0,T]$. 
 \end{algorithmic}
 \end{algorithm}
 \end{minipage}
 \vspace{-3mm}
 \end{tcolorbox}
 \end{figure}
 
Theorem \ref{thm:hull_F} and Corollary \ref{cor:reachable_tube} justify using Algorithm \ref{alg:1} to reconstruct the convex hulls $\hull(\X_t)$. 
Error bounds for the approximation are derived in Section \ref{sec:error_bounds}.

\section{Proof of Theorem \ref{thm:hull_F}}%
\label{sec:structure_proof} 
\noindent We prove Theorem \ref{thm:hull_F} using convex geometry and optimal control. We first prove Theorem \ref{thm:hull_F} assuming that $\X_0=\{x^0\}$ (i.e., that Assumption \ref{assumption:X0} holds with \AssumXa) by searching for trajectories with endpoints $x(T)$ on the boundary of the reachable set $\X_t$. We then characterize the structure of such trajectories using the Pontryagin Maximum Principle (PMP) and conclude with an argument using convex geometry. Finally, we prove the case where $\partial\X_0$ is an ovaloid (i.e., Assumption \ref{assumption:X0} holds with \AssumXb). 
We discuss these results in Section \ref{sec:structure_proof:discussion}.
\subsection{Searching for extreme points of $\hull(\X_T)$}\label{sec:structure_proof:ocp}
Assume that $\X_0=\{x^0\}$. 
Let $d\in\cS^{n-1}$ be a search direction and define the optimal control problem (OCP)
\begin{align}
\textbf{OCP}_d:
\begin{cases}
\inf_{w\in L^\infty([0,T],\W)} \ 
  &-d^\top x(T)
\\[1mm]
\textrm{s.t.} \quad\, \  
&\hspace{-13mm}\dot{x}(t)=f(t,x(t))+g(t,x(t))w(t),
\\[1mm]
&\hspace{-13mm}t\in[0,T], \quad x(0)=x^0.
\end{cases}
\nonumber
\\[-6mm]
\label{OCP}
\end{align}
\OCP{d} is well-posed under Assumptions \ref{assumption:f} and \ref{assumption:W}, 
i.e., it admits at least one solution $w_d\in L^\infty([0,T],\W)$  (see, e.g., \cite[Theorem 9]{Trelat2023}, and note that $w(\cdot)=0$ is feasible). %
Intuitively, solving \OCP{d} gives a reachable state $x_d(T)\in\X_T$ that is the furthest in the direction $d$.

\subsection{Reformulating \OCP{d} using the PMP to reduce the search of solutions from $L^\infty([0,T],\W)$ to $\R^n$}\label{sec:structure_proof:bvp}
The Pontryagin Maximum Principle (PMP) \cite{Pontryagin1987,Agrachev2004,Trelat2012} gives necessary conditions of optimality for \OCP{d}. 
As the Hamiltonian of \OCP{d} is given by $
H(t,x,w,p)=p^\top (f(t,x)+g(t,x)w))
$, 
for any locally-optimal solution $(x_d,w_d)$ of \OCP{d}, there exists an absolutely-continuous function $p_d:[0,T]\to\R^n$, called the adjoint vector, such that for almost every $t\in[0,T]$, 
\begin{subequations}\label{eq:pmp}
    \begin{align}
\label{eq:pmp:pdot}
\hspace{-2mm}\dot{p}_d(t)&=-\left(\begin{aligned}
&\nabla f(t,x_d(t))\  +\qquad 
\\[-1mm]
&\qquad \nabla g(t,x_d(t))w_d(t)
\end{aligned}\right)^\top 
p_d(t)
\\
\label{eq:pmp:pT}
\hspace{-2mm}p_d(T)&=d
\\
\label{eq:pmp:wd}
\hspace{-2mm}w_d(t)&=\mathop{\arg\max}_{v\in\W}\ p_d(t)^\top g(t,x_d(t))v
\\
\label{eq:pmp:xdot}
\hspace{-2mm}\dot{x}_d(t)&=f(t,x_d(t))+g(t,x_d(t))w_d(t)
\\
\label{eq:pmp:x0}
\hspace{-2mm}x_d(0)&=x^0.
\end{align}
\end{subequations}
A tuple $(x_d, p_d, w_d)$ satisfying the above equations is called (Pontryagin) extremal for  \OCP{d}.  
These equations indicate that the adjoint vector is non-zero at all times.
\begin{lemma}[No singular arcs]\label{lem:singular_arcs} Assume that $(f,g,\W)$ satisfy Assumptions \ref{assumption:f}-\ref{assumption:W}. Let $(x_d, p_d, w_d)$ be an extremal for \OCP{d} with $d\in\cS^{n-1}$. Then, $p_d(t)\neq 0$  and  $p_d(t)^\top g(t,x_d(t))\neq 0$ for every $t\in[0,T]$.
\end{lemma}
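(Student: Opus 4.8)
The plan is to exploit two structural facts: (i) the adjoint equation \eqref{eq:pmp:pdot} is \emph{linear} in $p_d$, and (ii) $g(t,x_d(t))$ is invertible by Assumption \ref{assumption:g}. The first fact yields $p_d(t)\neq 0$ everywhere, and the second immediately upgrades this to $p_d(t)^\top g(t,x_d(t))\neq 0$. So the lemma reduces to a uniqueness argument for a linear ODE followed by one line of linear algebra.

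For the first claim, I would rewrite \eqref{eq:pmp:pdot} as the homogeneous linear ODE $\dot p_d(t)=A(t)p_d(t)$ with $A(t)=-(\nabla f(t,x_d(t))+\nabla g(t,x_d(t))w_d(t))^\top$. Since $x_d$ is continuous on the compact interval $[0,T]$ (so its image is bounded), $w_d$ takes values in the bounded set $\W$, and the Jacobians $\nabla f,\nabla g$ are controlled by Assumption \ref{assumption:f}, the matrix-valued map $A(\cdot)$ is integrable on $[0,T]$. Standard Carath\'eodory theory then gives a unique, invertible state-transition matrix $\Phi(\cdot,\cdot)$ with $p_d(t)=\Phi(t,T)p_d(T)$. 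Because $p_d(T)=-d$ with $d\in\cS^{n-1}$ we have $p_d(T)\neq 0$, and invertibility of $\Phi(t,T)$ forces $p_d(t)\neq 0$ for every $t\in[0,T]$. Equivalently, by uniqueness of solutions to the linear ODE, if $p_d$ vanished at a single time it would vanish identically, contradicting $p_d(T)=-d$.

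For the second claim, I would argue by contradiction: suppose $p_d(t)^\top g(t,x_d(t))=0$ for some $t\in[0,T]$. Right-multiplying by the inverse $g(t,x_d(t))^{-1}$, which exists by Assumption \ref{assumption:g}, gives $p_d(t)^\top=0$, i.e. $p_d(t)=0$, contradicting the first claim. Hence $p_d(t)^\top g(t,x_d(t))\neq 0$ on all of $[0,T]$, which is precisely the statement that no singular arcs occur and that the switching function in \eqref{eq:pmp:wd} never vanishes.

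The only point requiring care is the integrability of $A(\cdot)$ needed to invoke uniqueness and invertibility of the transition matrix; this is exactly the regularity guaranteed by Assumption \ref{assumption:f} together with the boundedness of $x_d$ and $w_d$, and is the same regularity already used to assert well-posedness of \PMPODE{d^0}. Beyond verifying this, I do not expect a substantive obstacle: the result is essentially a consequence of the linearity of the adjoint dynamics and the invertibility of $g$.
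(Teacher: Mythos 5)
Your proof is correct and follows essentially the same route as the paper's: uniqueness of solutions to the linear adjoint ODE (propagated forward from a hypothetical zero of $p_d$) contradicts the terminal condition $p_d(T)=-d\neq 0$, and the second claim then follows from the invertibility of $g$ in Assumption \ref{assumption:g}. Your explicit state-transition-matrix formulation and the integrability check on $A(\cdot)$ are just a more detailed rendering of the same argument.
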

\begin{proof}
By contradiction, $p_d(t)=0$ for some $t\in[0,T]$. Then, $0$ is the unique solution to the ODE $\dot{p}(s)=\eqref{eq:pmp:pdot}$ for $s\in[t,T]$ with $p(t)=0$. Thus, we obtain $d\mathop{=}\limits^\eqref{eq:pmp:pT}p_d(T)=0$, which is a contradiction. 
The result $p_d(t)^\top g(t,x_d(t))\neq 0$ for $t\in[0,T]$ follows from $p_d(t)\neq 0$ for $t\in[0,T]$ and Assumption \ref{assumption:g}. %
\end{proof}

Thanks to Lemma \ref{lem:singular_arcs} and Assumption \ref{assumption:W}, the maximality condition \eqref{eq:pmp:wd} can be simplified. 
  First, since $p_d(t)^\top g(t,x_d(t))\neq 0$ for all $t\in[0,T]$ thanks to Lemma \ref{lem:singular_arcs},  \eqref{eq:pmp:wd} is well-defined. 
 Second, since $\W$ is convex and $v \mapsto p_d(t)^\top g(t,x_d(t)) v$ is linear, searching for disturbances in $\partial\W$ suffices. %
Then, 
\begin{align}
w_d(t)
&=
\mathop{\arg\max}_{v\in\partial\W}\ p_d(t)^\top  g(t,x_d(t)) v
\nonumber
\\
&=
\mathop{\arg\max}_{v\in\partial\W}\ \frac{p_d(t)^\top g(t,x_d(t)) }{\|p_d(t)^\top g(t,x_d(t)) \|} v
\nonumber
\\
&=
\left(n^{\partial\W}\right)^{-1}\left(\frac{p_d(t)^\top g(t,x_d(t)) }{\|p_d(t)^\top g(t,x_d(t)) \|}\right)
\label{eq:wstar}
\end{align}
where $n^{\partial\W}:\partial\W\to\cS^{n-1}$ is the Gauss map in \eqref{eq:gauss_map}, which is a diffeomorphism since $\partial\W$ is an ovaloid \cite{Rauch1974} by  Assumption \ref{assumption:W}. The last equality in \eqref{eq:wstar} follows from \eqref{eq:gauss_map_ineq:W} (note that $\eqref{eq:gauss_map_ineq:W}=0$ if and only if $v=w(t)$, due to the strict convexity of $\partial\W$).  
Thus, by combining \eqref{eq:pmp} and \eqref{eq:wstar}, we obtain that candidate optimal solutions of \OCP{d} must solve the boundary-value problem (BVP) 
\begin{align}\label{BVP}
\textbf{BVP}_d:  
\begin{cases}
\dot{x}_d(t)=\eqref{eq:pmp:xdot}, 
\quad 
\dot{p}_d(t)=\eqref{eq:pmp:pdot},
\\[0mm]
w_d(t)=\eqref{eq:wstar}, \quad \ 
t\in[0,T],
\\[0mm]
(x_d(0),p_d(T))=(x^0,d).
\end{cases}
\end{align} 
With \BVP{d}, we reduced the search of solutions to \OCP{d} from $w\in L^\infty([0,T],\W)$ to $p_d(0)\in\R^n$.

\subsection{Reformulating \BVP{d} with knowledge of $\frac{p_d(0)}{\|p_d(0)\|}$}\label{sec:structure_proof:guessing}

\BVP{d} and \eqref{eq:wstar} indicate that extremal state trajectories $x_d$ follow dynamics that only depend on $\frac{p_d}{\|p_d\|}$. The next result shows that extremal trajectories are independent of the norm of $p_d(0)$. Thus, it suffices to search over $\cS^{n-1}$ to retrieve extremal trajectories $x_d$.

\begin{lemma}[Extremals of \OCP{} are identified by $\frac{p(0)}{\|p(0)\|}$]
\label{lem:qt_solves_ode}
Assume that $(f,g,\W)$ satisfy Assumptions \ref{assumption:f}-\ref{assumption:W}. Let $d\in\cS^{\scalebox{0.7}{$n{-}1$}}$ and $(x_d, p_d, w_d)$ be an extremal for \OCP{d}. 
Then, there exist a direction $d^0\in\cS^{n-1}$ and an adjoint trajectory $\tilde{p}:[0,T]\to\R^n$ with $\tilde{p}(0)=d^0$ such that  
$(x_d, \tilde{p}, w_d)$ solves \PMPODE{d^0} with $\X_0=\{x^0\}$.
\end{lemma}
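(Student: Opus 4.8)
The plan is to exploit two structural facts about the extremal equations: the adjoint dynamics \eqref{eq:pmp:pdot} are linear and homogeneous in $p$, and the control law \eqref{eq:wstar} depends on $p_d$ only through the \emph{direction} $g(t,x_d(t))^\top p_d(t)/\|g(t,x_d(t))^\top p_d(t)\|$, which is invariant under multiplication of $p_d$ by any strictly positive scalar. Together, these imply that rescaling the adjoint to unit initial norm yields a trajectory satisfying exactly the same equations, and hence solving \PMPODE{d^0}. So the lemma is essentially a homogeneity/scale-invariance observation rather than a substantive computation.

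Concretely, I would first invoke Lemma~\ref{lem:singular_arcs} to guarantee $p_d(0)\neq 0$, so that the constant $c:=\|p_d(0)\|>0$ is well-defined and strictly positive. I then set $d^0:=p_d(0)/c\in\cS^{n-1}$ and define the rescaled adjoint $\tilde{p}(t):=p_d(t)/c$, so that $\tilde{p}(0)=d^0$ as required by the statement. It remains to verify that $(x_d,\tilde{p},w_d)$ satisfies every equation of \PMPODE{d^0} with $\X_0=\{x^0\}$. The $x$-dynamics \eqref{eq:pmp:xdot} and the initial condition $x_d(0)=x^0$ are untouched by the rescaling. Dividing \eqref{eq:pmp:pdot} by the constant $c$ and using linearity gives $\dot{\tilde{p}}(t)=-(\nabla f(t,x_d(t))+\nabla g(t,x_d(t))w_d(t))^\top\tilde{p}(t)$, which matches \eqref{eq:pmpode:p_dot}. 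For the control, since $c>0$ we have $g(t,x_d(t))^\top\tilde{p}(t)=(g(t,x_d(t))^\top p_d(t))/c$, so the normalized argument of $(n^{\partial\W})^{-1}$ is unchanged, i.e. $g^\top\tilde{p}/\|g^\top\tilde{p}\|=g^\top p_d/\|g^\top p_d\|$; hence expression \eqref{eq:wstar} for $w_d$ coincides with the control law \eqref{eq:pmpode:wt} written in terms of $\tilde{p}$. This shows $(x_d,\tilde{p},w_d)$ solves \PMPODE{d^0}.

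There is no serious obstacle here. The only points requiring care are the nonvanishing of $p_d(0)$, supplied by Lemma~\ref{lem:singular_arcs}, and the \emph{positivity} of $c$, which is precisely what makes the direction $g^\top p_d/\|g^\top p_d\|$ scale-invariant (a negative scalar would flip its sign and break the match with \eqref{eq:pmpode:wt}). By the uniqueness of solutions to \PMPODE{d^0} noted after its definition, $(x_d,\tilde{p})$ then coincides with $(x_{d^0},p_{d^0})$, which is the identity $x_d(t)=F(d^0,t)$ used downstream to prove Theorem~\ref{thm:hull_F}.
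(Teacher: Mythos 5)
Your proof is correct, and it takes a shorter route than the paper's. Both arguments rest on the same observation --- the extremal system is invariant under positive rescaling of the adjoint, because \eqref{eq:pmp:pdot} is linear homogeneous in $p$ and \eqref{eq:wstar} sees only the direction of $g(t,x_d(t))^\top p_d(t)$ --- but you implement it by a single constant rescaling $\tilde p=p_d/\|p_d(0)\|$ and a direct verification that $(x_d,\tilde p,w_d)$ satisfies every equation of \PMPODE{d^0}. The paper instead normalizes pointwise in time, setting $q(t)=p(t)/\|p(t)\|$, computes the projected ODE $\dot q=-(I_n-qq^\top)(\nabla f+\nabla g\,w)^\top q$ on the sphere, and then identifies the two trajectories by uniqueness of solutions to that projected system. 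Your version avoids the $\dot q$ computation entirely and is the more elementary argument for this lemma; what the paper's pointwise normalization buys is the explicit sphere-valued ODE for $q$, which makes the scale-invariance structure visible along the whole trajectory (not just at $t=0$) and connects to the projection formulation used in the earlier conference version. You correctly flag the two points that matter: $p_d(0)\neq 0$ via Lemma~\ref{lem:singular_arcs}, and the strict positivity of the scaling constant, without which the normalized argument of $(n^{\partial\W})^{-1}$ would flip sign. Your closing appeal to uniqueness of solutions of \PMPODE{d^0} to identify $x_d(t)$ with $F(d^0,t)$ is also exactly what the downstream use in Lemma~\ref{lem:partial_HY_subset_F_subset_Y} requires.
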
 
\begin{proof}
First, for all $t\in[0,T]$, we define 
\begin{equation}\label{eq:q}
q_d(t)=\frac{p_d(t)}{\|p_d(t)\|},
\end{equation}
which is well-defined for all $t\in[0,T]$ thanks to Lemma \ref{lem:singular_arcs} and such that $q_d(0)=\frac{p_d(0)}{\|p_d(0)\|}\in\cS^{n-1}$. We define 
\begin{equation}\label{eq:d0_q0}
d^0=q_d(0).
\end{equation}
and write $(x_t,p_t,w_t,q_t)=(x_d(t),p_d(t),w_d(t),q_d(t))$ for conciseness.  
As $(x, p, w)$ is an extremal for \OCP{d}, 
\begin{subequations}\label{eq:ode_wxq}
\begin{align}
&w_t
\mathop{=}\limits^\eqref{eq:wstar}
\left(n^{\partial\W}\right)^{-1}\left(\frac{q_t^\top g(t,x_t) }{\|q_t^\top g(t,x_t) \|}\right),
\label{eq:wstar_q}
\\
&\dot{x}_t
\mathop{=}\limits^\eqref{eq:pmp:xdot}
f(t,x_t)+g(t,x_t)w_t,
\\
&\dot{q}_t
=
\frac{\dot{p}_t}{\|p_t\|} -p_t\frac{p_t^\top\dot{p}_t}{\|p_t\|^{3}}
=
\left(I_n-\frac{p_tp_t^\top}{\|p_t\|^2}\right)
\frac{\dot{p}_t}{\|p_t\|}
\nonumber
\\
&\hspace{-1mm}\mathop{=}\limits^\eqref{eq:pmp:pdot}
{-}\left(I_n{-}q_tq_t^\top\right)
(\nabla f(t,x_t){+}\nabla g(t,x_t)w_t)^\top q_t,
\\
&q_0\mathop{=}\limits^\eqref{eq:d0_q0}d^0.
\end{align}
\end{subequations}
Next, let $(\tilde{x},\tilde{p},\tilde{w})$ be the solution to \PMPODE{d^0} and define $\tilde{q}=\frac{\tilde{p}}{\|\tilde{p}\|}$. We claim that $(\tilde{x},\tilde{w})=(x,w)$. Indeed,
\begin{subequations}\label{eq:ode_wxq_tilde}
\begin{align}
&\tilde{w}_t
=
\left(n^{\partial\W}\right)^{-1}\left(\frac{\tilde{q}_t^\top g(t,\tilde{x}_t) }{\|\tilde{q}_t^\top g(t,\tilde{x}_t) \|}\right),
\\
&\dot{\tilde{x}}_t
=
f(t,\tilde{x}_t)+g(t,\tilde{x}_t)\tilde{w}_t,
\\
&\dot{\tilde{q}}_t
=
-(I_n{-}\tilde{q}_t\tilde{q}_t^\top)
(\nabla f(t,\tilde{x}_t){+}\nabla g(t,\tilde{x}_t)\tilde{w}_t)^\top \tilde{q}_t,
\\
&\tilde{q}_0=d^0.
\end{align}
\end{subequations}
By uniqueness of solutions to ODEs, from \eqref{eq:ode_wxq} and \eqref{eq:ode_wxq_tilde}, we conclude that $(\tilde{x},\tilde{q},\tilde{w})=(x,q,w)$, and in particular that  $(\tilde{x},\tilde{w})=(x,w)$. The conclusion follows.
\end{proof}

\subsection{Concluding the proof of Theorem \ref{thm:hull_F}}%
\label{sec:structure_proof:thm1_A4a}
Lemma \ref{lem:qt_solves_ode} yields the following key result. 
\begin{lemma}\label{lem:partial_HY_subset_F_subset_Y}
Assume that $f$, $g$, and $\W$ satisfy Assumptions \ref{assumption:f}-\ref{assumption:W}, $\X_0$ satisfies Assumption \ref{assumption:X0} with \AssumXa ($\X_0=\{x^0\}$ is a singleton), and define $F=\eqref{eq:F}$ as in Theorem \ref{thm:hull_F}. 
Then, for all $t\in[0,T]$, $F(\cdot,t)$ is smooth and
\begin{align}\label{eq:partial_HY_subset_F_subset_Y}
\left(\partial\hull(\X_t)\cap\X_t\right)\subseteq F(\cS^{n-1},t)\subseteq\X_t.
\end{align}
\end{lemma}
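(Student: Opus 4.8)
The plan is to establish the two set inclusions in \eqref{eq:partial_HY_subset_F_subset_Y} separately, together with the smoothness of $F(\cdot,t)$, relying on the supporting-hyperplane lemma (Lemma \ref{lem:hyperplane}), the compactness of $\X_t$ (Lemma \ref{lem:Y_is_compact}), and the reduction established in Lemma \ref{lem:qt_solves_ode}.

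First, for the smoothness of $F(\cdot,t)$ and the easy inclusion $F(\cS^{n-1},t)\subseteq\X_t$, I would argue as follows. For any $d^0\in\cS^{n-1}$, the solution $(x_{d^0},p_{d^0})$ to \PMPODE{d^0} has an adjoint $p_{d^0}$ that never vanishes (the argument of Lemma \ref{lem:singular_arcs} applies verbatim since $p_{d^0}(0)=d^0\neq 0$), so the normalization in \eqref{eq:pmpode:wt} is well-defined and $w_{d^0}(t)=(n^{\partial\W})^{-1}(\cdot)\in\partial\W\subseteq\W$ for all $t$. Hence $w_{d^0}\in L^\infty([0,T],\W)$ and $x_{d^0}(\cdot)$ solves \eqref{eq:ODE} with this disturbance and initial state $x^0$, so $F(d^0,t)=x_{d^0}(t)=x_{(w_{d^0},x^0)}(t)\in\X_t$ directly from \eqref{eq:Y_reachable_set}. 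For smoothness, the right-hand side of \PMPODE{d^0} is a smooth function of $(x,p)$ on the region where $g(t,x)^\top p\neq 0$: indeed $f,g$ and their Jacobians are regular by Assumption \ref{assumption:f}, and the inverse Gauss map $(n^{\partial\W})^{-1}$ is a smooth diffeomorphism since $\partial\W$ is an ovaloid by Assumption \ref{assumption:W}. Smooth dependence of ODE solutions on their initial conditions then yields that $d^0\mapsto x_{d^0}(t)=F(d^0,t)$ is smooth.

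The main work lies in the reverse inclusion $(\partial\hull(\X_t)\cap\X_t)\subseteq F(\cS^{n-1},t)$. I would fix $t\in[0,T]$ and a point $x^*\in\partial\hull(\X_t)\cap\X_t$. Since $\X_t$ is compact (Lemma \ref{lem:Y_is_compact}), $\hull(\X_t)$ is compact and convex, so Lemma \ref{lem:hyperplane} provides a supporting direction $d\in\cS^{n-1}$ with $d^\top x^*\geq d^\top y$ for all $y\in\hull(\X_t)\supseteq\X_t$; as $x^*\in\X_t$, the point $x^*$ maximizes $d^\top y$ over $\X_t$. Hence any reachable trajectory ending at $x^*$ is an optimal solution of \OCP{d} posed with horizon $t$ in place of $T$ (this problem is well-posed exactly as \OCP{d}). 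Denoting such an optimal trajectory by $(\bar{x},\bar{w})$ with $\bar{x}(t)=x^*$, the PMP applied on $[0,t]$ yields an adjoint $\bar{p}$ satisfying \eqref{eq:pmp} with terminal condition $\bar{p}(t)=-d$. I would then invoke the reduction of Lemma \ref{lem:qt_solves_ode}, whose argument depends only on the structure of the extremal equations and not on the value of the horizon: setting $d^0=\bar{p}(0)/\|\bar{p}(0)\|\in\cS^{n-1}$, the trajectory $\bar{x}$ coincides on $[0,t]$ with the solution $x_{d^0}$ of \PMPODE{d^0}. Therefore $x^*=\bar{x}(t)=x_{d^0}(t)=F(d^0,t)\in F(\cS^{n-1},t)$, which establishes \eqref{eq:partial_HY_subset_F_subset_Y}.

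The crux, and the step requiring the most care, is this reverse inclusion: the passage from ``$x^*$ is a boundary point of the convex hull'' to ``$x^*$ is captured by the adjoint-sampled flow $F$.'' Three things must align: (i) the supporting hyperplane must be turned into a genuine optimal control problem whose solution reaches $x^*$, ensuring that the trajectory actually reaching $x^*$ (and not merely some other trajectory with the same objective value) is selected as the extremal; (ii) the PMP and the normalization argument of Lemma \ref{lem:qt_solves_ode} must be applied at the arbitrary horizon $t$ rather than at $T$, which is legitimate because neither the adjoint dynamics \eqref{eq:pmp:pdot} nor the maximality condition \eqref{eq:wstar} depend on the horizon; and (iii) one must observe that searching only over the initial direction $d^0\in\cS^{n-1}$, rather than over all terminal adjoints and all norms $\|\bar{p}(0)\|$, loses no extremals, which is precisely the content of Lemma \ref{lem:qt_solves_ode}. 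The full conclusion of Theorem \ref{thm:hull_F} would then follow from \eqref{eq:partial_HY_subset_F_subset_Y} by taking convex hulls and using $\hull(\X_t)=\hull(\partial\hull(\X_t)\cap\X_t)$ from Lemma \ref{lem:hull_is_hull_of_subset}.
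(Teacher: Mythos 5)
Your proposal is correct and follows essentially the same route as the paper: the easy inclusion and smoothness come from the definition of \PMPODE{d^0} and smooth dependence on initial conditions, while the reverse inclusion combines the supporting-hyperplane argument (Lemma \ref{lem:hyperplane}), optimality of the trajectory reaching the boundary point for \OCP{d}, the PMP, and the reduction of Lemma \ref{lem:qt_solves_ode}. The only cosmetic difference is that the paper proves the claim at $t=T$ and remarks that the general case follows by redefining \OCP{d} at horizon $t$, whereas you work directly at an arbitrary horizon $t$ — which, as you note, changes nothing in the extremal equations.
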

\begin{proof}
Without loss of generality, we prove the result for $t=T$. %
The proof can be extended to $t\in[0,T)$ by defining \OCP{d}\hspace{-2.3mm}\blue{${}^t$} to maximize $d^\top x(t)$, which results in the same expressions for \PMPODE{} and $F$. %

First, $F(\cdot,T)$ is smooth since it is the solution to an ODE with smooth coefficients. %

Second, $F(\cS^{n-1},T)\subseteq\X_T$ by definition. To show the other inclusion, let $y\in\partial\hull(\X_T)\cap\X_T$. 
Since $y\in\X_T$, there exists some $w\in L^\infty([0,T],\W)$ such that $y=x_w(T)$ where $x_w$ solves the ODE in \eqref{eq:ODE}. Then, since $x_w(T)=y\in\partial\hull(\X_T)$, by  the convexity of $\hull(\X_T)$ and Lemma \ref{lem:hyperplane}, %
$x_w(T)$ maximizes the function $\tilde{w}\mapsto d^\top x_{\tilde{w}}(T)$ over $\tilde{w}\in L^\infty([0,T],\W)$ for some $d\in\cS^{n-1}$, i.e., $(x_w,w)$ solves \OCP{d}. %
 Thus, by Lemma \ref{lem:qt_solves_ode}, $x_w$ solves \PMPODE{d^0} for some $d^0\in\cS^{n-1}$. 
We obtain $y=x_w(T)=F(d^0,T)\in F(\cS^{n-1},T)$. 
\end{proof}

Theorem \ref{thm:hull_F} (for the case where $\X_0=\{x^0\}$) almost immediately follows from Lemmas  \ref{lem:hull_is_hull_of_subset} and  \ref{lem:partial_HY_subset_F_subset_Y}.  To prove the case where $\partial\X_0$ is an ovaloid, we define a dynamical system with the same reachable sets but with a fixed initial state and conclude with the previous result. %

\textit{Proof of Theorem \ref{thm:hull_F}  if $\X_0=\{x^0\}$ (Assumption \ref{assumption:X0} holds with \AssumXa):} 
For any $t\in[0,T]$, $\hull(\X_t)=\hull(F(\cS^{n-1},t))$  %
follows from taking the convex hull on both sides of \eqref{eq:partial_HY_subset_F_subset_Y} and using $\hull\left(\partial\hull(\X_t)\cap\X_t\right)=\hull(\X_t)$ (Lemma \ref{lem:hull_is_hull_of_subset})  since $\X_t$ is compact (Lemma \ref{lem:Y_is_compact}). 
\hfill $\blacksquare$ %

\textit{Proof of Theorem \ref{thm:hull_F}  if $\partial\X_0$ is an ovaloid  (Assumption \ref{assumption:X0} holds with \AssumXb):} 
We define the new ODE
\begin{align}
\label{eq:ODE_extended}
&\hspace{-2mm}\dot{\tilde{x}}(t)=\begin{cases}
v(t)&\hspace{-2mm}\text{if }t\in[-1,0]
\\
f(t,\tilde{x}(t))+g(t,\tilde{x}(t))w(t) &\hspace{-2mm}\text{if }t\in[0,T]
\end{cases}
\\
&\hspace{-2mm}\tilde{x}(-1)=0,
\nonumber
\end{align}
where $w\in L^\infty([0,T],\W)$ and $v\in L^\infty([-1,0],\X_0)$. Under Assumptions  \ref{assumption:f}-\ref{assumption:X0}, this ODE has a unique solution, denoted by $\tilde{x}_{(w,v)}(\cdot)$. 
We define the reachable sets 
$
\tilde{\X}_t
=
\left\lbrace
\tilde{x}_{(w,v)}(t): 
w\in L^\infty([0,T],\W),
\, 
v\in L^\infty([-1,0],\X_0)
\right\rbrace
$ 
for $t\in[-1,T]$. By definition, %
$\tilde{\X}_0=\X_0$ and 
\begin{equation}\label{eq:tilde_X_is_X}
\tilde{\X}_t=\X_t\text{ for all }t\in[0,T].
\end{equation}

\ifarxiv
    \begin{table}[t]
    \begin{center}
    \caption{Problems used to prove Theorem \ref{thm:hull_F}.%
    }\label{table:problems}
    \begin{tabular}{ |l|c|c|c| } 
     \hline
     Problem & unknown variables & number of variables
     \\
     \hline
     \hline
     \OCP{d} & $w\in L^\infty([0,T],\W)$ & infinite 
     \\
     \hline
     \BVP{d} & $p(0)\in\R^n$ & $n$
     \\
     \hline
     \PMPODE{d^0} & None & $0$
     \\
    \hline
    \end{tabular}
    \end{center}
    \vspace{-4mm}
    \end{table}

\else
    \begin{table}[t]
    \begin{center}
    \caption{Problems used to prove Theorem \ref{thm:hull_F}.%
    }\label{table:problems}
    \begin{tabular}{ |l|c|c|c| } 
     \hline
     Problem & unknown variables & number of variables
     \\
     \hline
     \hline
     \OCP{d} & $w\in L^\infty([0,T],\W)$ & infinite 
     \\
     \hline
     \BVP{d} & $p(0)\in\R^n$ & $n$
     \\
     \hline
     \PMPODE{d^0} & None & $0$
     \\
    \hline
    \end{tabular}
    \end{center}
    \vspace{-4mm}
    \end{table}
\fi

Given any $d^0\in\cS^{n-1}$, we define the ODE 
\begin{align}\label{PMPODE_extended}
\begin{cases}
\dot{\tilde{x}}(t)=\eqref{eq:ODE_extended},
\qquad\qquad 
\left(\widetilde{\textbf{ODE}}_{d^0}\right)
  &\hspace{-2.5mm}\scalebox{0.9}{$t\in[-1,T]$}
\\
\dot{\tilde{p}}(t)=
0,
  &\hspace{-2.5mm}\scalebox{0.9}{$t\in[-1,0]$}
\\
\tilde{v}(t)=
(n^{\partial\X_0})^{-1}\left(\tilde{p}(t)/\|\tilde{p}(t)\|\right),
  &\hspace{-2.5mm}\scalebox{0.9}{$t\in[-1,0]$}
\\
\dot{\tilde{p}}(t)=-\left(\begin{aligned}
&\nabla f(t,\tilde{x}(t))\  +\qquad 
\\
&\qquad\quad \nabla g(t,\tilde{x}(t))\tilde{w}(t)
\end{aligned}\right)^\top 
\tilde{p}(t),
&\hspace{-1mm}\scalebox{0.9}{$t\in[0,T]$}
\\
\tilde{w}(t)=
(n^{\partial\W})^{-1}\left(\frac{g(t,\tilde{x}(t))^\top \tilde{p}(t)}{\|g(t,\tilde{x}(t))^\top \tilde{p}(t)\|}\right),
  &\hspace{-1mm}\scalebox{0.9}{$t\in[0,T]$}
\\
\tilde{x}(-1)=0,
\\
\tilde{p}(-1)=d^0,
\end{cases}
\nonumber
\\[-7mm]
\end{align} 
which has a unique 
solution $(\tilde{x},\tilde{p})_{d^0}\in C([-1,T],\R^{2n})$ thanks to Assumptions \ref{assumption:f}-\ref{assumption:X0}, and the map
\begin{equation}\label{eq:F_extended}
\tilde{F}:
\ 
\cS^{n-1}\times[-1,T]\to\R^n: 
(d^0,t)\mapsto \tilde{x}_{d^0}(t). 
\end{equation}
Theorem \ref{thm:hull_F} (with fixed initial condition $\tilde{x}(-1)\,{=}\,0$) gives
$$
\hull(\X_t)\mathop{=}\limits^{\eqref{eq:tilde_X_is_X}}\hull(\tilde{\X}_t)=\hull(\tilde{F}(\cS^{n-1},t))
\text{ for all }t\in[-1,T].
$$
Next, from \hyperref[PMPODE_extended]{
$\widetilde{\textbf{ODE}}_{d^0}$}, $\tilde{p}(t)=d^0$ for all $t\in[-1,0]$, so $\tilde{v}(t)=(n^{\partial\X_0})^{-1}(d^0)$ for all $t\in[-1,0]$, and
\begin{equation}\label{eq:xptilde_at_t=0}
\left(\tilde{x}(0),\tilde{p}(0)\right)=\left((n^{\partial\X_0})^{-1}(d^0),d^0\right).
\end{equation}
Thus,  \hyperref[PMPODE_extended]{
$\widetilde{\textbf{ODE}}_{d^0}$} restricted to $t\in[0,T]$ from \eqref{eq:xptilde_at_t=0} is exactly \PMPODE{d^0}, which concludes the proof of Theorem \ref{thm:hull_F}. 
\hfill $\blacksquare$ %

\subsection{Discussion and insights}\label{sec:structure_proof:discussion}


In Table \ref{table:problems}, we summarize the different problems used to derive \PMPODE{d^0} and ultimately prove Theorem \ref{thm:hull_F}. 

\makeatletter
\xpatchcmd{\algorithmic}
  {\ALG@tlm\z@}{\leftmargin\z@\ALG@tlm\z@}
  {}{}
\makeatother

\begin{wrapfigure}{R}{0.63\linewidth}
\vspace{-5mm}
\begin{tcolorbox}[
title={\textit{Alg. \customlabel{alg:2}{2}(optimal control scheme)}},
colbacktitle=ForestGreen!10, 
coltitle=black,
boxrule=3pt,
colframe=ForestGreen!10, %
colback=ForestGreen!4,
enhanced, boxsep=0pt, left=6pt, right=6pt]
 \begin{minipage}{1\linewidth}
 \begin{algorithm}[H]
 \vspace{-5mm}
 \centering
 {\small
 \begin{algorithmic}
 \ForAll {$d\in\cS^{n-1}$}
 \State $x_d\gets\textrm{Solve}($\OCP{d} or \BVP{d}$\hspace{-1mm})$
\EndFor
 \State\Return $\hull\left(x_d(T), d\in\cS^{n-1}\right)$ 
 \end{algorithmic}
 }%
 \end{algorithm}
 \end{minipage}
 \vspace{-3mm}
 \end{tcolorbox}
 \vspace{-6mm}
 \end{wrapfigure}

 At first sight, \OCP{d} and \BVP{d} suggest using  Algorithm \ref{alg:2} to reconstruct the convex hull of the reachable set $\hull(\X_T)$ (similar ideas are investigated in \cite{Gornov2015} and in \cite{Baier2009}). 
However, this procedure can be computationally expensive. Also, \OCP{d} is generally non-convex, so Algorithm \ref{alg:2}  could be prone to local minima and under-estimating the reachable sets. Thus, Algorithm \ref{alg:2} may be  unsuitable for applications that require efficient reachable set over-approximations. Carrying on the analysis using samples of $p_d(0)\in\cS^{n-1}$ and observing that the norm of $p_d(t)$ does not play a role in the problem (Section \ref{sec:structure_proof:guessing}) is key to our result.

Lemma \ref{lem:qt_solves_ode} implies that extremal trajectories are completely specified by the initial value of the adjoint vector $p_d(0)\in\cS^{n-1}$. Thus, given $p_d(0)=d^0$, we can integrate \PMPODE{d^0} to obtain the corresponding reachable extremal states $x_d(t)$, independently of the search direction $d$ (as $d$  is implicitly encoded in $p_d(0)$). This observation is the key insight behind Algorithm \ref{alg:1}, that consists of integrating \PMPODE{d^0} for different values of $d^0\in\cS^{n-1}$ to recover the convex hulls of the reachable sets. %
\rev{To gain further intuition with the linear case, see  Appendix \ref{apdx:linear_case}.}

\section{The boundary structure of $\hull(\X_t)$, geometric estimation, and error bounds}\label{sec:error_bounds}
How accurate are the estimates returned by Algorithm \ref{alg:1}, which approximates the reachable convex hulls $\hull(\X_t)$ with the convex hulls of a finite number of state trajectories?  
First, we show that the boundaries of the convex hulls of reachable sets are  smooth submanifolds under Assumptions \ref{assumption:f}-\ref{assumption:X0} (Lemma \ref{lem:partial_hull_reachable_submanifold}). This smooth boundary structure implies tight error bounds for convex-hull sampling-based estimators (Theorem \ref{thm:error_bound}). Indeed, error bounds of sample-based approximations typically rely on smoothness properties of the functions of interest and on sufficient coverage of the samples, as shown below.

\begin{corollary}[Naive error bound]\label{cor:error_bound}
Assume that $f$, $g$, $\W$, and $\X_0$ satisfy Assumptions \ref{assumption:f}-\ref{assumption:X0}. 
Let $\delta>0$, $Z_\delta\subset\cS^{n-1}$ be a $\delta$-cover of $\cS^{n-1}$ (i.e., $\cS^{n-1}\subset Z_\delta+B(0,\delta)$), and define $F=\eqref{eq:F}$. 
Then
\begin{equation}\label{eq:error_bound_naive}
\hspace{-0.5pt}
d_H(\hull(\X_t),\hull(F(Z_\delta,t)))\leq\bar{L}_t\delta\, 
\text{ for all }t\in[0,T],
\hspace{-1mm}
\end{equation}
where $\bar{L}_t$ denotes the Lipschitz 
\ifarxiv
    constant\footnote{$F(\cdot, t)$ is Lipschitz since it is differentiable and $\cS^{n-1}$ is compact.} 
\else
    constant\footnote{$F(\cdot, t)$ is Lipschitz since it is differentiable and $\cS^{n-1}$ is compact.} 
\fi
of $F(\cdot,t)$. 
\end{corollary}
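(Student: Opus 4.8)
The plan is to chain together Theorem~\ref{thm:hull_F} with two elementary Lipschitz facts, so that the $\delta$-cover estimate on $\cS^{n-1}$ is pushed first through the map $F(\cdot,t)$ and then through the convex-hull operation. By Theorem~\ref{thm:hull_F} we have $\hull(\X_t)=\hull(F(\cS^{n-1},t))$, so it suffices to bound $d_H(\hull(F(\cS^{n-1},t)),\hull(F(Z_\delta,t)))$. The whole argument rests on the observation that both ``apply a Lipschitz map'' and ``take the convex hull'' behave well with respect to the Hausdorff distance \eqref{eq:hausdorff_distance}.

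First I would record that the cover assumption gives $d_H(\cS^{n-1},Z_\delta)\leq\delta$. Indeed, since $Z_\delta\subset\cS^{n-1}$, the supremum $\sup_{z\in Z_\delta}d_{\cS^{n-1}}(z)$ in \eqref{eq:hausdorff_distance} vanishes, while the condition $\cS^{n-1}\subset Z_\delta+B(0,\delta)$ bounds the other supremum by $\delta$. Next, because $F(\cdot,t)$ is $\bar{L}_t$-Lipschitz on the compact set $\cS^{n-1}$, for each $d\in\cS^{n-1}$ I pick $z\in Z_\delta$ with $\norm{d-z}\leq\delta$ and obtain $\norm{F(d,t)-F(z,t)}\leq\bar{L}_t\delta$; hence $F(\cS^{n-1},t)\subset F(Z_\delta,t)+B(0,\bar{L}_t\delta)$, and since $F(Z_\delta,t)\subset F(\cS^{n-1},t)$ trivially, this yields $d_H(F(\cS^{n-1},t),F(Z_\delta,t))\leq\bar{L}_t\delta$. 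Stated abstractly, an $L$-Lipschitz map $\Phi$ satisfies $d_H(\Phi(A),\Phi(B))\leq L\,d_H(A,B)$ on compact sets.

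The one step that deserves an explicit statement is that the convex-hull operation is $1$-Lipschitz in the Hausdorff distance: for compact $A,B\subset\R^n$, $d_H(\hull(A),\hull(B))\leq d_H(A,B)$. I would derive this from the Minkowski-sum identity $\hull(A+B(0,r))=\hull(A)+B(0,r)$, which holds because $B(0,r)$ is convex: if $A\subset B+B(0,r)$ then $\hull(A)\subset\hull(B+B(0,r))=\hull(B)+B(0,r)$, and symmetrically, so $d_H(\hull(A),\hull(B))\leq r$ whenever $d_H(A,B)\leq r$. Applying this with $A=F(\cS^{n-1},t)$, $B=F(Z_\delta,t)$, and $r=\bar{L}_t\delta$, then invoking $\hull(\X_t)=\hull(F(\cS^{n-1},t))$ from Theorem~\ref{thm:hull_F}, gives exactly \eqref{eq:error_bound_naive}.

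I do not expect a genuine obstacle here, which is consistent with the result being labeled a ``naive'' bound: it is a direct composition of Theorem~\ref{thm:hull_F} with two non-expansiveness facts. The only point requiring care is the $1$-Lipschitzness of the hull map with respect to $d_H$; it is standard but should be stated (and briefly justified via the Minkowski-sum identity) rather than used implicitly, since the bound $\bar{L}_t\delta$ would otherwise appear to come for free.
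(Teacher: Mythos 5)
Your proof is correct and is essentially the paper's argument: the paper simply invokes Theorem \ref{thm:hull_F} together with ``a standard covering argument'' (citing an external lemma), and your writeup is exactly that argument spelled out — the $\delta$-cover bound on $d_H(\cS^{n-1},Z_\delta)$, the Lipschitz push-forward through $F(\cdot,t)$, and the $1$-Lipschitzness of the convex-hull operation in Hausdorff distance. No gaps.
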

Corollary \ref{cor:error_bound} follows from Theorem \ref{thm:hull_F} using a standard covering argument, see \cite[Lemma 4.2]{LewBonalliJansonPavone2023}.
It implies that given a sufficiently-dense sample $Z_\delta=\{d^i\}_{i=1}^M$ that $\delta$-covers $\cS^{n-1}$, padding the set estimates from Algorithm \ref{alg:1} by $\epsilon_t=\bar{L}_t\delta$ suffices to obtain over-approximations of the reachable sets $\X_t$. 
However, %
Corollary \ref{cor:error_bound} does not fully exploit the smoothness of the sets of interest. %

\begin{lemma}[$\partial\hull(\X_t)$ is smooth]\label{lem:partial_hull_reachable_submanifold}
Assume that $f$, $g$, $\W$, and $\X_0$ satisfy Assumptions \ref{assumption:f}-\ref{assumption:X0}. Then,  
$\partial\hull(\X_t)$ is an $(n-1)$-dimensional submanifold of $\R^n$ for any $t>0$.
\end{lemma}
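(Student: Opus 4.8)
The plan is to reduce the statement to showing that, for $t>0$, the compact convex body $K_t:=\hull(\X_t)$ is \emph{full-dimensional and strictly convex with a smooth boundary}; once this is done, $\partial\hull(\X_t)=\partial K_t$ is automatically a smoothly embedded $(n-1)$-dimensional submanifold of $\R^n$ (indeed an ovaloid in the sense of Section~\ref{sec:preliminary}). The parametrization I would use comes from Theorem~\ref{thm:hull_F}: the smooth map $F(\cdot,t)\colon\cS^{n-1}\to\R^n$ of Lemma~\ref{lem:partial_HY_subset_F_subset_Y}, together with the \emph{normal map}
$$
N_t\colon\cS^{n-1}\to\cS^{n-1},\qquad N_t(d^0)=-p_{d^0}(t)/\|p_{d^0}(t)\|,
$$
which is smooth and well defined because $p_{d^0}(t)\neq 0$ by Lemma~\ref{lem:singular_arcs}. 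The one fact I would extract from the derivation of \PMPODE{d^0} is the \emph{support correspondence}: every support point of $K_t$ in a direction $d$, being a maximizer of $d^\top x$ over trajectory endpoints, is an optimal solution of (the time-$t$ analogue of) \OCP{d}, hence by Lemma~\ref{lem:qt_solves_ode} equals $F(d^0,t)$ for some $d^0$ with $N_t(d^0)=d$; and by Lemma~\ref{lem:extreme_points_hull} together with Lemma~\ref{lem:partial_HY_subset_F_subset_Y} these support points exhaust $\partial K_t$.

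Full-dimensionality for $t>0$ is soft: since $g$ is invertible (Assumption~\ref{assumption:g}) and $\W$ has nonempty interior, the endpoint map $w\mapsto x_w(t)$ is a submersion at $w\equiv 0$, so $\X_t$, and hence $K_t$, has nonempty interior and $\partial K_t$ has the expected dimension. The crux is therefore to prove that $N_t$ is a diffeomorphism of $\cS^{n-1}$. Surjectivity is again soft: for any $d\in\cS^{n-1}$ the time-$t$ analogue of \OCP{d} admits a maximizer (by compactness of $\W$ and Lemma~\ref{lem:Y_is_compact}), which by Lemma~\ref{lem:qt_solves_ode} yields a $d^0$ with $N_t(d^0)=d$. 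Injectivity and regularity I would obtain by showing that the differential $\dd(N_t)_{d^0}$ is nonsingular on $T_{d^0}\cS^{n-1}$ for every $d^0$, after which $N_t$ is a proper local diffeomorphism of the compact connected sphere onto itself, hence a covering map, and surjectivity upgrades it to a global diffeomorphism.

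Granting that $N_t$ is a diffeomorphism, the rest is a clean convex-geometry argument. Injectivity of $N_t$ forces each support set of $K_t$ to contain a single extreme point (by the support correspondence above), hence to be a singleton, so $K_t$ is strictly convex; the unique maximizer in direction $d$ is then $\gamma_t(d):=F(N_t^{-1}(d),t)$, and $\partial K_t=\gamma_t(\cS^{n-1})$ because every boundary point of a strictly convex body is the unique maximizer for the normal of a supporting hyperplane (Lemma~\ref{lem:hyperplane}). The map $\gamma_t=F(\cdot,t)\circ N_t^{-1}$ is smooth and bijective onto $\partial K_t$; since the same variational estimate that makes $\dd N_t$ nonsingular also makes $\dd\gamma_t$ (equivalently, the shape operator of the image) nonsingular, $\gamma_t$ is an immersion, and a smooth bijective immersion from a compact manifold is an embedding. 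Thus $\partial\hull(\X_t)$ is a smoothly embedded $(n-1)$-dimensional submanifold of $\R^n$.

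I expect the single hard step to be the nonsingularity of $\dd(N_t)_{d^0}$: everything else is either soft (compactness, existence of maximizers, the submersion argument) or a direct appeal to the earlier lemmas. Establishing it requires analyzing the variational (Jacobi) equation associated with \PMPODE{d^0} and showing that the strictly positive curvature of $\partial\W$ and $\partial\X_0$ (Assumptions~\ref{assumption:W}--\AssumXb) propagates through the linearized flow so as to preclude conjugate points on $[0,t]$ — this is precisely the place where the ovaloid assumptions on the uncertainty sets are indispensable.
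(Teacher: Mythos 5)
Your overall architecture does not match what the lemma asserts, and its central step is false under the paper's hypotheses. You reduce the statement to showing that $K_t=\hull(\X_t)$ is \emph{strictly convex} and that the normal map $N_t$ is a diffeomorphism of $\cS^{n-1}$ (no conjugate points on $[0,t]$). But $\hull(\X_t)$ is in general not strictly convex: in Example \ref{example:selfintersect} the set $F(\cS^{n-1},t)$ self-intersects for $t$ large, so $\partial\hull(\X_t)$ contains straight segments bridging the concave parts of $\X_t$; the two endpoints of such a segment are distinct extreme points $F(d^0_1,t)\neq F(d^0_2,t)$ that both maximize $d^\top x$ for the same outward normal $d$, and by the very support correspondence you set up (the transversality condition $p(t)=-d$ together with Lemma \ref{lem:qt_solves_ode}) this forces $N_t(d^0_1)=N_t(d^0_2)=d$ with $d^0_1\neq d^0_2$. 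Hence $N_t$ is not injective, conjugate points do occur for large $t$, and the ``single hard step'' you defer --- nonsingularity of $\dd (N_t)_{d^0}$ upgrading to a global diffeomorphism --- cannot be carried out. Everything downstream (strict convexity, $\gamma_t=F(\cdot,t)\circ N_t^{-1}$ bijectively parametrizing the boundary, the claimed ovaloid conclusion) hinges on that step, so the proof does not go through; note also that this one genuinely hard claim is only described, never proved.

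The underlying confusion is between two dual properties of a convex body: \emph{strict convexity} (each direction has a unique support point), which fails here, and \emph{smoothness} (each boundary point has a unique support hyperplane), which is what the lemma needs and what actually holds. The paper proves the latter directly: since $\hull(\X_t)$ is compact with nonempty interior, by \cite[Theorem 2.2.4]{Schneider2014} it suffices to show uniqueness of the supporting hyperplane at every boundary point. At a point $x=F(d^0,T)\in\partial\hull(\X_T)$, two distinct normals $d\neq\tilde d$ would make the same extremal pair $(x_w,w)$ optimal for both \OCP{d} and \OCP{\tilde d}, forcing $w(T)=(n^{\partial\W})^{-1}\big(g(T,x)^\top d/\|g(T,x)^\top d\|\big)$ and simultaneously the same formula with $\tilde d$ in place of $d$ --- impossible because $g(T,x)$ is invertible (Assumption \ref{assumption:g}) and $n^{\partial\W}$ is a diffeomorphism (Assumption \ref{assumption:W}). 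Boundary points not of this form lie on segments whose endpoints are such points and inherit their unique hyperplane. This is where the ovaloid and invertibility assumptions actually enter: through the terminal maximality condition of the PMP, not through a curvature propagation along the linearized flow.
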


The proof of Lemma \ref{lem:partial_hull_reachable_submanifold} uses the structure of extremal trajectories and is in Section \ref{sec:error_bound:proof}.\footnote{A version of Lemma \ref{lem:partial_hull_reachable_submanifold} quantifying the smoothness of the boundary $\partial\hull(\X_t)$ can be derived by combining the interior smoothness properties of reachable sets  in \cite{Lorenz2005,Cannarsa2006}  
and of convex hulls in \cite{LewBonalliJansonPavone2023}.}
Thanks %
to the smoothness of the dynamics, of the input set $\cS^{n-1}$ for the map $F$, and of the reachable convex hull boundary $\partial\hull(\X_t)$ (see \cite{LewBonalliJansonPavone2023} for a quantitative definition of smoothness of sets), Algorithm \ref{alg:1} admits the following error bounds.

\begin{tcolorbox}[
title={\textit{Theorem \customlabel{thm:error_bound}{2}(Estimation error)}},colbacktitle=blue!10, 
coltitle=black,
boxrule=3pt,
colframe=blue!10, %
enhanced, colback=blue!4, boxsep=0pt, left=6pt, right=6pt]
Assume that $f$, $g$, $\W$, and $\X_0$ satisfy Assumptions \ref{assumption:f}-\ref{assumption:X0}. 
Let $\delta>0$, $Z_\delta\subset\cS^{n-1}$ be a $\delta$-cover of $\cS^{n-1}$, and define $F=\eqref{eq:F}$. 
Then, 
\begin{equation}\label{eq:error_bound}
d_H(\hull(\X_t),\hull(F(Z_\delta,t)))\leq
\left(\frac{\bar{L}_t+\bar{H}_t}{2}\right)\delta^2
\end{equation}
for all $t\in[0,T]$, where $(\bar{L}_t,\bar{H}_t)$ are the Lipschitz constants of $(F^t,\dd F^t)$, where $F^t(\cdot)=F(\cdot,t)$. 
\end{tcolorbox}
The error bound in \eqref{eq:error_bound} is quadratic in $\delta$. 
It is thus tighter than the naive error bound in Corollary \ref{cor:error_bound} for smaller values of $\delta$ (i.e., for sufficiently-many samples of $d^0$ so that $\delta\leq (2\bar{L}_t)/(\bar{L}_t+\bar{H}_t)$). 

\rev{According to Theorem \ref{thm:error_bound}, the sample complexity of Algorithm \ref{alg:1} is exponential in the dimension of the sample space $\cS^{n-1}$, as the minimum number of samples to $\delta$-cover a compact set scales exponentially with the dimension of the set \cite[Eq.(5.9)]{wainwright_2019}, so the performance of Algorithm \ref{alg:1} may degrade as the state dimension $n$ increases. This limitation is shared by other algorithms and  is known as the curse of dimensionality. Nevertheless, thanks to Theorem \ref{thm:hull_F}, the sample space is only of dimension $(n-1)$ as opposed to an infinite-dimensional space of disturbances, so we expect better performance than if  naively sampling disturbances, see Section \ref{sec:numerical}.}
\begin{remark}[Proving Theorem \ref{thm:error_bound}]\label{remark:deriving_error_bounds}
The proof of Theorem \ref{thm:error_bound} relies on Theorem \ref{thm:hull_F}. It takes inspiration from \cite[Theorem 1.1]{LewBonalliJansonPavone2023}, but requires new analysis due to several difficulties. First, the map $F(\cdot,t)$ is not a diffeomorphism onto its image: $\cS^{n-1}$ is an $(n-1)$-dimensional submanifold of $\R^n$, but the set $F(\cS^{n-1},t)$ may self-intersect and is thus not a submanifold of $\R^n$, see Example \ref{example:selfintersect}. 
$F$ is neither a submersion: $\dd F(d^0,t):T_{d^0}\cS^{n-1}\to T_{F(d^0,t)}\R^n$ cannot be surjective  since $\cS^{n-1}$ is only $(n-1)$-dimensional. %
To prove Theorem \ref{thm:error_bound}, we exploit properties of solutions to \PMPODE{d^0} and of $F$. 
Theorem \ref{thm:error_bound} relies on Lemma \ref{lem:partial_hull_reachable_submanifold}, whose proof relies on the PMP and the structure of extremal trajectories from the PMP coupled with properties of convex sets.
\end{remark}

\begin{figure}[t]
\begin{tcolorbox}[
title={\textit{Example \customlabel{example:selfintersect}{2}(Intersections)\phantom{${}^1$}}},colbacktitle=gray!14, 
coltitle=black,
boxrule=3pt,
colframe=gray!14, %
colback=gray!6,
boxsep=0pt, before skip=0pt, after skip=0pt]
\begin{minipage}{0.62\linewidth}
    \centering  
\includegraphics[width=1\linewidth]{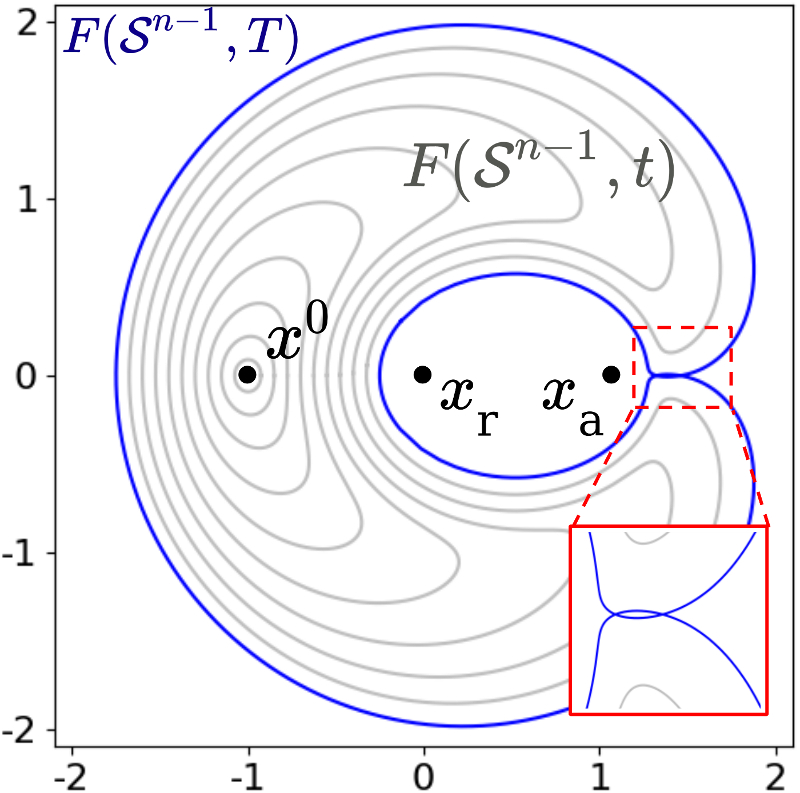}
\end{minipage}
\hspace{0.03\linewidth}
\begin{minipage}{0.32\linewidth}
\captionof{figure}{Solutions of \PMPODE{d^0} for all directions $d^0\in\cS^{n-1}$ at different times $t\in[0,T]$ for the attraction-repulsion system in Example \ref{example:selfintersect}.}
\label{fig:selfintersections}
\end{minipage}

Consider the $2$d dynamics $\dot{x}(t)=f(x(t))+w(t)$: $f(x)=\frac{x_\textrm{a}-x}{\|x_\textrm{a}-x\|^3}-\frac{x_\textrm{r}-x}{\|x_\textrm{r}-x\|^3}$ gives attraction-repulsion 
\ifarxiv
    terms\footnote{To satisfy Assumption \ref{assumption:f}, $f(\cdot)$ can be composed with a smooth cut-off function so it is well-defined at $x_\textrm{a}$ and $x_\textrm{r}$.}, 
\else
    terms\footnote{To satisfy Assumption \ref{assumption:f}, $f(\cdot)$ can be composed with a smooth cut-off function so it is well-defined at $x_\textrm{a}$ and $x_\textrm{r}$.}, 
\fi
$x_\textrm{a},x_\textrm{r},x^0\in\R^2$, %
and $\W=B(0,0.1)$. %
The sets $F(\cS^{n-1},t)$ are shown in Figure \ref{fig:selfintersections}. 
For $t$ large-enough, $F(\cS^{n-1},t)$ is not a submanifold of $\R^n$ since it self-intersects. Yet, Assumptions \ref{assumption:f}-\ref{assumption:X0} (and thus Theorems \ref{thm:hull_F} and \ref{thm:error_bound}) hold. 
Intuitively, when taking the convex hulls of $F(\cS^{n-1},t)$, intersections vanish. %
Practically, this convexity leads to a characterization that holds for arbitrarily-large times $T$, compared to results in \cite{Krener1989} that study the structure of the true reachable sets and rely on small-time assumptions.
\end{tcolorbox}
\end{figure}

\section{Proofs of Theorem \ref{thm:error_bound} and Lemma \ref{lem:partial_hull_reachable_submanifold}}
\label{sec:error_bound:proof}
First, we prove that $\partial\hull(\X_t)$ is a submanifold of $\R^n$ of dimension $(n-1)$ (Lemma  \ref{lem:partial_hull_reachable_submanifold}). The analysis leverages Theorem \ref{thm:hull_F} and properties of convex sets.

\textit{Proof of Lemma  \ref{lem:partial_hull_reachable_submanifold}:} 
$\hull(\X_t)$ is convex, compact (Lemma \ref{lem:Y_is_compact}), and has interior points ($\Int(\X_t)\neq\emptyset$ since the reachable state $x_w(t)$ associated to the disturbance $w(\cdot)=0$ is clearly in $\Int(\X_t)$). %
Thus, by \cite[Theorem 2.2.4]{Schneider2014}, it suffices to prove that there is a unique support hyperplane to $\hull(\X_t)$ at any boundary point $x\in\partial\hull(\X_t)$. %
In the following, as in the proof of Lemma \ref{lem:partial_HY_subset_F_subset_Y}, we prove the result for $t=T$  without loss of generality. 

First, let $x\in\partial\hull(\X_T)\cap F(\cS^{n-1},T)$. 
As $x\in\partial\hull(\X_T)$, 
by Lemma \ref{lem:hyperplane}, there exists a support hyperplane $\{y\in\R^n:d^\top (y-x)=0\}$ for $\hull(\X_T)$ at $x$ parameterized by some 
$d\in\cS^{n-1}$ such that $d^\top x\geq d^\top y$ for all $y\in\hull(\X_T)$. In particular, since $\X_T\subseteq\hull(\X_T)$, 
\begin{equation}\label{eq:normal_inequality}
d^\top x\geq d^\top y\quad\text{for all}\quad y\in\X_T.
\end{equation} 
As $x\in F(\cS^{n-1},T)$, there exists some $d^0\in\cS^{n-1}$ and $(x_w,p_w,w)$ that solve \PMPODE{d^0} with  $x=x_w(T)$. 
Then, by \eqref{eq:normal_inequality}, $d^\top x=d^\top x_w(T)\geq d^\top y$ for all $y\in\X_T$, so $(x_w,w)$ solves \OCP{d}. Define $q_w(t)=p_w(t)/\|p_w(t)\|$ as in \eqref{eq:q}. Then, $q_w(T)=d$ by \eqref{eq:pmp:pT}, so $w(T)
=(n^{\partial\W})^{-1}(g(T,x)^\top d/\|g(T,x)^\top d\|)$.

\begin{wrapfigure}{R}{0.41\linewidth}
\begin{minipage}{1\linewidth}
    \centering  
\includegraphics[width=1\linewidth]{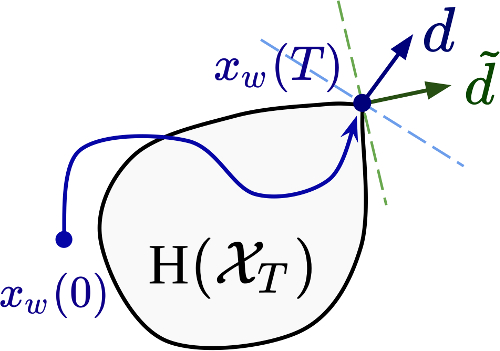}
\caption{Two support hyperplanes at $x{=}x_w(T)$.}
\label{fig:two_support_hyperplanes}
\end{minipage}
\end{wrapfigure}

By contradiction (see Figure \ref{fig:two_support_hyperplanes}), assume that there is a different support hyperplane for $\hull(\X_T)$ at $x=x_w(T)$  parameterized by $\tilde{d}\in\cS^{n-1}$ with $\tilde{d}\neq d$. Then, since $\tilde{d}^\top x\geq \tilde{d}^\top y$ for all $y\in\X_T$, from the previous reasoning, the same trajectory $(x_w,w)$ also solves \OCP{\tilde{d}}. 
Thus, $w$ satisfies $w(T)=(n^{\partial\W})^{-1}(g(T,x)^\top\tilde{d}/\|g(T,x)^\top\tilde{d}\|)$. 
This is a contradiction, since $g(T,x)$ is invertible by Assumption \ref{assumption:g}, $n^{\partial\W}$ is a diffeomorphism by Assumption \ref{assumption:W}, and $\tilde{d}\neq d$. %
Thus, $x$ has a unique support hyperplane.

Second, we consider boundary points that are not in $F(\cS^{n-1},T)$. Let $x\in\partial\hull(\X_T)\setminus F(\cS^{n-1},T)$. 
As $\text{Ext}(\hull(F(\cS^{n-1},T)))\subseteq F(\cS^{n-1},T)$ (Lemma \ref{lem:extreme_points_hull}) and $\hull(\X_T)=\hull(F(\cS^{n-1},T))$ (Theorem \ref{thm:hull_F}), $x$ is not an extreme point. Thus, $x$ can be written as $x=\alpha x_1+(1-\alpha)x_2$ for some $\alpha\in(0,1)$,  $x_1\in\hull(\X_T)$, and $x_2\in\partial\hull(\X_T)\cap F(\cS^{n-1},T)$ (see the proof of Lemma \ref{lem:extreme_points_hull} and note that $\hull(\X_T)=\hull(\partial\hull(\X_T)\cap F(\cS^{n-1},T))$ by Theorem \ref{thm:hull_F} and Lemma \ref{lem:extreme_points_hull}). Since $x\in\partial\hull(\X_T)$, there is a support hyperplane $H=\{y\in\R^n:d^\top (y-x)=0\}$  at $x$ parameterized by some $d\in\cS^{n-1}$ such that $d^\top x\geq d^\top y$ for all $y\in\hull(\X_T)$. Thus,
$$
x=\alpha x_1+(1-\alpha)x_2, \ \ 
d^\top x\geq d^\top x_1, \ \ 
d^\top x\geq d^\top x_2,
$$
from which one can show that $d^\top x=d^\top x_1=d^\top x_2$. Thus, $H$ is a support hyperplane at $x_2$. Since $x_2$ has a unique support hyperplane as shown previously, we conclude that $H$ is the unique support hyperplane at $x$. This concludes the proof of Lemma  \ref{lem:partial_hull_reachable_submanifold}.
\hfill $\blacksquare$ %

The proof of Theorem \ref{thm:error_bound} relies on the fact that $F$ maps tangent spaces of $\cS^{n-1}$ to tangent spaces of $\partial\hull(\X_t)$.
\begin{lemma}[Tangent vectors map to tangent vectors]\label{lem:tangent_to_tangent} 
Assume that $f$, $g$, $\W$, and $\X_0$ satisfy Assumptions \ref{assumption:f}-\ref{assumption:X0}. 
Let $t\in(0,T]$, and define $F^t(\cdot)=F(\cdot,t)$. Then, 
\begin{equation}
\dd F^t_{d^0}(T_{d^0}\cS^{n-1})\subseteq T_x\partial\hull(\X_t)
\end{equation}
for all 
$x\in\partial\hull(\X_t)\cap F(\cS^{n-1},t)$, where $x=F(d^0,t)$ and $d^0\in\cS^{n-1}$.
\end{lemma}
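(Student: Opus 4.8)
The plan is to reduce the claimed inclusion to the scalar identity $p_{d^0}(t)^\top \dd F^t_{d^0}(v)=0$ for every $v\in T_{d^0}\cS^{n-1}$, and to prove it via a conservation law along \PMPODE{d^0}. First I would identify the target tangent space. Since $\hull(\X_t)$ is convex with nonempty interior and $\partial\hull(\X_t)$ is a smooth $(n-1)$-submanifold (Lemma \ref{lem:partial_hull_reachable_submanifold}), the tangent space $T_x\partial\hull(\X_t)$ is the translate to the origin of the unique support hyperplane at $x$, i.e. $T_x\partial\hull(\X_t)=d^\perp$ where $d\in\cS^{n-1}$ is the outward unit normal. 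Arguing as in the proof of Lemma \ref{lem:partial_hull_reachable_submanifold}, for $x=F(d^0,t)\in\partial\hull(\X_t)$ the solution of \PMPODE{d^0} solves \OCP{d} and satisfies $q_{d^0}(t):=p_{d^0}(t)/\|p_{d^0}(t)\|=-d$, so $p_{d^0}(t)$ is a nonzero multiple of $d$ and $T_x\partial\hull(\X_t)=p_{d^0}(t)^\perp$. Hence it suffices to prove $p_{d^0}(t)^\top\xi(t)=0$, where $\xi(t):=\dd F^t_{d^0}(v)$.

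Next I would set up the variational (sensitivity) equations. Pick a smooth curve $s\mapsto d^0(s)\in\cS^{n-1}$ with $d^0(0)=d^0$ and $\partial_s d^0(s)|_{s=0}=v$, let $(x(\cdot,s),p(\cdot,s))$ solve \PMPODE{d^0(s)}, and set $\xi=\partial_s x|_{s=0}$ and $\eta=\partial_s p|_{s=0}$, so that $\xi(t)=\dd F^t_{d^0}(v)$. The right-hand side of \PMPODE{d^0} is smooth in $(x,p)$: the disturbance $w^*(t,x,p)=(n^{\partial\W})^{-1}(-g(t,x)^\top p/\|g(t,x)^\top p\|)$ is well defined and smooth because $g(t,x)^\top p\neq 0$ (Lemma \ref{lem:singular_arcs}) and $n^{\partial\W}$ is a diffeomorphism (Assumption \ref{assumption:W}). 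Therefore $(\xi,\eta)$ solve the linearized ODE, in particular $\dot\xi=\partial_x\Phi\,\xi+\partial_p\Phi\,\eta$ where $\Phi(t,x,p)=f(t,x)+g(t,x)w^*(t,x,p)$.

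The crux is the conservation identity $\tfrac{\dd}{\dd t}\big(p(t)^\top\xi(t)\big)=0$. Writing $M:=\nabla f(t,x)+\nabla g(t,x)w^*$ for the Jacobian of $\Phi$ at frozen $w^*$, the adjoint equation gives $\dot p=-M^\top p$, while $\partial_x\Phi=M+g\,\partial_x w^*$ and $\partial_p\Phi=g\,\partial_p w^*$. Substituting into $\dot p^\top\xi+p^\top\dot\xi$, the two $p^\top M\xi$ terms cancel and one is left with $\tfrac{\dd}{\dd t}(p^\top\xi)=(g^\top p)^\top\big(\partial_x w^*\,\xi+\partial_p w^*\,\eta\big)$. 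The bracketed quantity is the total derivative $\partial_s w^*(t,x(t,s),p(t,s))|_{s=0}$, which is tangent to $\partial\W$ at $w^*$ since $w^*(t,\cdot,\cdot)$ takes values in $\partial\W$; but $g^\top p$ spans the normal line of $\partial\W$ at $w^*$ (because $n^{\partial\W}(w^*)=-g^\top p/\|g^\top p\|$), so this inner product vanishes. I expect this tangency/cancellation step to be the main obstacle, as it is precisely where the optimality of $w^*$ (equivalently, the envelope structure of the PMP) enters; the matrix manipulations must be carried out carefully using the paper's definition of $\nabla g(t,x)w^*$.

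Finally I would discharge the initial condition and conclude. By conservation, $p_{d^0}(t)^\top\xi(t)=p(0)^\top\xi(0)$. Under \AssumXa the initial state is fixed, so $\xi(0)=0$. Under \AssumXb we have $x(0,s)=(n^{\partial\X_0})^{-1}(-d^0(s))\in\partial\X_0$, whence $\xi(0)\in T_{x(0)}\partial\X_0$; since $p(0)=d^0$ is normal to $\partial\X_0$ at $x(0)$ (as $n^{\partial\X_0}(x(0))=-d^0$), again $p(0)^\top\xi(0)=0$. In both cases $p_{d^0}(t)^\top\xi(t)=0$, i.e. $\dd F^t_{d^0}(v)\in p_{d^0}(t)^\perp=T_x\partial\hull(\X_t)$, which proves the lemma.
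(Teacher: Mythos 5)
Your proof is correct, but it takes a genuinely different route from the paper's. The paper argues by contradiction and purely geometrically: if $\dd F^t_{d^0}(v)\notin T_x\partial\hull(\X_t)$, then the curve $r\mapsto F^t(\gamma(r))$ (which lies in $\X_t\subseteq\hull(\X_t)$ by Theorem \ref{thm:hull_F}) has velocity transverse to the boundary of the convex body $\hull(\X_t)$ at $x$, and hence must exit $\hull(\X_t)$ by \cite[Lemma 4.6]{LewBonalliJansonPavone2023} --- a contradiction. That argument is three lines long but outsources the key step to an external lemma and never identifies the normal direction. You instead prove the stronger, quantitative statement $p_{d^0}(t)^\top \dd F^t_{d^0}(v)=0$ directly, via the conservation law $\tfrac{\dd}{\dd t}(p^\top\xi)=0$ along the variational equations of \PMPODE{d^0}; the cancellation at the heart of it (the $s$-variation of $w^*$ is tangent to $\partial\W$ while $g^\top p$ spans its normal line, and likewise $\xi(0)\perp p(0)$ on $\partial\X_0$) is exactly the transversality mechanism of the PMP, and your handling of both initial-condition cases \AssumXa and \AssumXb is right. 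What your route buys is self-containedness and an explicit identification of the outward normal to $\partial\hull(\X_t)$ at $x$ as $-p_{d^0}(t)/\|p_{d^0}(t)\|$; what it costs is (i) reliance on the identification $T_x\partial\hull(\X_t)=p_{d^0}(t)^\perp$, which in turn needs the matching of the \PMPODE{d^0} adjoint with the PMP adjoint of \OCP{d} (you correctly defer this to the argument in the proof of Lemma \ref{lem:partial_hull_reachable_submanifold}, where positive proportionality of the two adjoints follows from invertibility of $g$ and injectivity of $n^{\partial\W}$), and (ii) slightly more regularity bookkeeping, since differentiability of $p(\cdot,s)$ in $s$ implicitly uses second derivatives of $f$ and $g$; this is at the same level of implicit regularity the paper itself assumes when it asserts smoothness of $F(\cdot,t)$ and Lipschitzness of $\dd F^t$ in Theorem \ref{thm:error_bound}, so it is not a gap relative to the paper's own standard.
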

\begin{proof}
By Lemma \ref{lem:partial_hull_reachable_submanifold}, each tangent space $T_x\partial\hull(\X_t)$ is well-defined and of dimension $(n-1)$. Next, let $t\in(0,T]$ and $x\in\partial\hull(\X_t)\cap F(\cS^{n-1},t)$ be such that $x=F(d^0,t)$ for some $d^0\in\cS^{n-1}$. 

By contradiction, let $v\in T_{d^0}\cS^{n-1}$ be a tangent vector such that $\dd F^t_{d^0}(v)\notin T_x\partial\hull(\X_t)$. Then, there exists a smooth curve $\gamma:(-\epsilon,\epsilon)\to\cS^{n-1}$ such that $\gamma(0)=d^0$ and $\gamma'(0)=v$. Define the smooth curve $\alpha:(-\epsilon,\epsilon)\to\R^n$ by $\alpha(r)=F^t(\gamma(r))$ and note that $\alpha(r)\in\X_t$ for all $r\in(-\epsilon,\epsilon)$ by Theorem \ref{thm:hull_F}. 
Since $\alpha'(0)=\dd F^t_{d^0}(v)\notin T_x\partial\hull(\X_t)$, by \cite[Lemma 4.6]{LewBonalliJansonPavone2023}, there exists some $s\in(-\epsilon,\epsilon)$ such that $\alpha(s)\notin\hull(\X_t)$. This is a contradiction, since $\alpha(s)\in\X_t\subseteq\hull(\X_t)$. 
\end{proof}

Finally, Theorem \ref{thm:error_bound} follows from combining Lemmas  \ref{lem:partial_hull_reachable_submanifold}-\ref{lem:tangent_to_tangent} and recent geometric results in \cite{LewBonalliJansonPavone2023}.

\textit{Proof of Theorem \ref{thm:error_bound}:}
First, each tangent space $T_x\partial\hull(\X_t)$ at $x\in\partial\hull(\X_t)$ is well-defined by Lemma \ref{lem:partial_hull_reachable_submanifold}. 
Then, for all $x\in\partial\hull(\X_t)\cap F(\cS^{n-1},t)$ and $d^0,z\in\cS^{n-1}$ with $x=F(d^0,t)$,
\begin{equation}\label{thm:error_bound:tangent_bound}
d_{T_x\partial\hull(\X_t)}(F(z,t)-x)\leq\frac{1}{2}(\bar{L}_t+\bar{H}_t)\|z-d^0\|^2
\end{equation}
by \cite[Lemma 4.4]{LewBonalliJansonPavone2023} %
and due to Lemma \ref{lem:tangent_to_tangent} (the proof of \cite[Lemma 4.4]{LewBonalliJansonPavone2023} applies to our setting by replacing the use of\cite[Lemma 4.7]{LewBonalliJansonPavone2023} with Lemma \ref{lem:tangent_to_tangent}). Moreover, applying \cite[Lemma 4.3]{LewBonalliJansonPavone2023} gives
\begin{align}
\label{thm:error_bound:hausdorff_tangent}
&d_H(\hull(\X_t),\hull(F(Z_\delta,t)))\hfill
\\
&
\hfill\leq
\sup_{x\in\partial\hull(\X_t)\cap F(\cS^{n-1},t)}
\left(
\inf_{z\in Z_\delta}d_{T_x\partial\hull(\X_t)}(F(z,t)-x)
\right).
\nonumber
\end{align}
The conclusion follows from \eqref{thm:error_bound:tangent_bound} and \eqref{thm:error_bound:hausdorff_tangent}.
\hfill $\blacksquare$ %
\section{Approximate characterization for rectangular uncertainty sets $\W$ and $\X_0$}\label{sec:rectangular}
In the next three sections, we relax Assumptions \ref{assumption:g}-\ref{assumption:X0}. 
First, we relax Assumptions \ref{assumption:W} and \href{assumption:X0}{(A4b)}, which state that $\partial\W$ and $\partial\X_0$ are ovaloids. These assumptions prevent using rectangular uncertainties, as defined below. %
\begin{tcolorbox}[
title={\textit{Assumption \customlabel{assumption:W_box}{5}($\W$ is a hyper-rectangle)}
\phantom{$a^1$}
},
coltitle=black,
boxrule=2pt,
colframe=blue!6, %
enhanced, colback=blue!2, boxsep=0pt, left=6pt, right=6pt]
Let $\delta\bar{w}\in\R^n$. The set of disturbances is given as 
$$
\W=\{
w\in\R^n:
  |w_i|\leq\delta\bar{w}_i\ \forall i=1,\dots,n
\}.
$$
\end{tcolorbox}
\begin{tcolorbox}[
title={\textit{Assumption \customlabel{assumption:X0_box}{6}($\X_0$ is a hyper-rectangle)}
\phantom{$a^1$}
},
coltitle=black,
boxrule=2pt,
colframe=blue!6, %
enhanced, colback=blue!2, boxsep=0pt, left=6pt, right=6pt]
Let $\bar{x}_0,\delta\bar{x}_0\in\R^n$. The set of initial states is 
$$
\X_0=\{
x\in\R^n:
  |x_i-\bar{x}_{0i}|\leq\delta\bar{x}_{0i}
\ \forall i=1,\dots,n\}.
$$
\end{tcolorbox}
We propose an approximation scheme for problems with hyper-rectangular sets $(\W,\X_0)$ satisfying Assumptions \ref{assumption:W_box} and \ref{assumption:X0_box}. Given a relaxation parameter $\lambda>1$, we use smooth inner- and outer-approximations 
of $(\W,\X_0)$, \rev{defined in \eqref{eq:Wlambda_X0lambda_approximations}  as $\lambda$-norm ellipsoids}. 
The approximation scheme is shown in Figure  \ref{fig:rectangle_smooth_approx} and has three key properties. 
\begin{itemize}
\item
The approximations \rev{of $(\W,\X_0)$} 
satisfy Assumptions \ref{assumption:W} and \ref{assumption:X0}, so the convex hulls of the\rev{ir associated} reachable sets 
are characterized by Theorem \ref{thm:hull_F}. 
\item
The 
\rev{approximations} 
either inner- or outer-\rev{bound} $(\W,\X_0)$, 
so their reachable sets 
either inner- and outer-approximate the true reachable sets $\X_t$. 
\item
By choosing $\lambda$ large-enough, the \rev{approximations} 
can be made arbitrarily close to $(\W,\X_0)$, so the \rev{resulting} approximate reachable sets 
can be made arbitrarily close to the true reachable sets $\X_t$.
\end{itemize}
Combining these properties, we obtain arbitrarily-close inner- and outer-approximations of the convex hulls of the reachable sets $\X_t$ of dynamical systems with $(\W,\X_0)$ satisfying Assumptions \ref{assumption:W_box} and \ref{assumption:X0_box}. This characterization is given in Theorem \ref{thm:hull_F:rect_lambda} and is proved in Section \ref{sec:rectangular_proof}. Below, we state this result and necessary definitions.

Given a relaxation parameter $\lambda>1$, we define the maps 
$\left(\,\uwidehat{n^{\partial\W}_\lambda}\,\right)^{-1},
\left(\,\uwidehat{n^{\partial\X_0}_\lambda}\,\right)^{-1}:
\cS^{n-1}\to\R^n$ 
by 
\begin{subequations}
\begin{align}
\label{eq:gauss_map:Wlambda_under}
\left(\uwidehat{n^{\partial\W}_\lambda}\right)^{-1}(d)
&=
\frac{d\odot|d|^{\frac{2-\lambda}{\lambda-1}}\odot\delta\bar{w}^{\frac{\lambda}{\lambda-1}}}
{\big\||d\odot\delta\bar{w}|^{\frac{1}{\lambda-1}}\big\|_\lambda},
\\
\label{eq:gauss_map:X0lambda_under}
\left(\uwidehat{n^{\partial\X_0}_\lambda}\right)^{-1}(d)
&=
\bar{x}_0+
\frac{d\odot|d|^{\frac{2-\lambda}{\lambda-1}}\odot\delta\bar{x}_0^{\frac{\lambda}{\lambda-1}}}
{\big\||d\odot\delta\bar{x}_0|^{\frac{1}{\lambda-1}}\big\|_\lambda},
\end{align}
\end{subequations}
for any $d\in\cS^{n-1}$, the under-approximation ODE
\begin{tcolorbox}[
coltitle=black,
boxrule=2pt,
colframe=blue!6, %
enhanced, colback=blue!2, boxsep=1pt, left=6pt, right=6pt]
\vspace{-3mm}
\begin{align*}
\hspace{-2mm}
\customlabel{PMPODElambda_under}{}\uwidehat{\textbf{ODE}^\lambda_{d^0}}{:} \ \   
\begin{aligned}
\dot{x}(t)&=\eqref{eq:ODE},
\quad 
\dot{p}(t)=\eqref{eq:pmpode:p_dot},
\quad t\in[0,T],
\\
w(t)&=
\left(\,\uwidehat{n^{\partial\W}_\lambda}\,\right)^{-1}\left(\frac{g(t,x(t))^\top p(t)}{\|g(t,x(t))^\top p(t)\|}\right),
\\
x(0)&=
\left(\,\uwidehat{n^{\partial\X_0}_\lambda}\,\right)^{-1}(d^0), \quad 
p(0)=d^0,
\end{aligned}
\end{align*}
\end{tcolorbox}
\noindent 
the maps 
$\left(\,\widehat{n^{\partial\W}_\lambda}\,\right)^{-1},
\left(\,\widehat{n^{\partial\X_0}_\lambda}\,\right)^{-1}:
\cS^{n-1}\to\R^n$ by
\begin{subequations}
\begin{align}
\label{eq:gauss_map:Wlambda_over}
\left(\widehat{n^{\partial\W}_\lambda}\right)^{-1}(d)
&=
n^{\frac{1}{\lambda}}\frac{d\odot|d|^{\frac{2-\lambda}{\lambda-1}}\odot\delta\bar{w}^{\frac{\lambda}{\lambda-1}}}
{\big\||d\odot\delta\bar{w}|^{\frac{1}{\lambda-1}}\big\|_\lambda},
\\
\label{eq:gauss_map:X0lambda_over}
\left(\widehat{n^{\partial\X_0}_\lambda}\right)^{-1}(d)
&=
\bar{x}_0+
n^{\frac{1}{\lambda}}\frac{d\odot|d|^{\frac{2-\lambda}{\lambda-1}}\odot\delta\bar{x}_0^{\frac{\lambda}{\lambda-1}}}
{\big\||d\odot\delta\bar{x}_0|^{\frac{1}{\lambda-1}}\big\|_\lambda},
\end{align}
\end{subequations}
for any $d\in\cS^{n-1}$, and the over-approximation ODE
\begin{tcolorbox}[
coltitle=black,
boxrule=2pt,
colframe=blue!6, %
enhanced, colback=blue!2, boxsep=1pt, left=6pt, right=6pt]
\vspace{-3mm}
\begin{align*}
\hspace{-2mm}
\customlabel{PMPODElambda_over}{\widehat{\textbf{ODE}^\lambda_{d^0}}}
{:} \ \ 
\begin{aligned}
\dot{x}(t)&=\eqref{eq:ODE},
\quad 
\dot{p}(t)=\eqref{eq:pmpode:p_dot},
\quad t\in[0,T],
\\
w(t)&=
\left(\,\widehat{n^{\partial\W}_\lambda}\,\right)^{-1}\left(\frac{g(t,x(t))^\top p(t)}{\|g(t,x(t))^\top p(t)\|}\right),
\\
x(0)&=
\left(\,\widehat{n^{\partial\X_0}_\lambda}\,\right)^{-1}(d^0), \quad 
p(0)=d^0.
\end{aligned}
\end{align*}
\end{tcolorbox}

The result below approximately characterizes the convex hulls of reachable sets of systems with rectangular sets $\W$ and $\X_0$. Importantly,  the proposed approximations always inner- and outer-bound the true convex hulls and converge as the relaxation parameter $\lambda$ increases.

\begin{tcolorbox}[
title={\textit{Theorem  \customlabel{thm:hull_F:rect_lambda}{3}(Approximate characterization for hyper-rectangular uncertainty sets $\W$ and $\X_0$)}},
colbacktitle=blue!10, 
coltitle=black,
boxrule=3pt,
colframe=blue!10, %
breakable,
enhanced, colback=blue!4, boxsep=0pt, left=6pt, right=6pt]
Assume that $f$ and $g$ satisfy Assumptions \ref{assumption:f} and \ref{assumption:g}, and that $\W$ and $\X_0$ satisfy Assumptions \ref{assumption:W_box} and \ref{assumption:X0_box}. 
Given any $\lambda>1$ and direction $d^0\in\cS^{n-1}$, we define the augmented ODEs \PMPODElambdaunder{d^0} and \PMPODElambdaover{d^0}, with unique solutions
$(\uwidehat{x}_{d^0}^\lambda,\uwidehat{p}_{d^0}^\lambda)$ and $(\widehat{x}_{d^0}^\lambda,\widehat{p}_{d^0}^\lambda)$, respectively.
Define the two maps
\begin{align}
\label{eq:Flambdas}
\hspace{-1mm}
\uwidehat{F_\lambda}:
\cS^{n-1}\times[0,T]\to\R^n: 
(d^0,t)\mapsto\uwidehat{x}_{d^0}^\lambda(t),
\\
\hspace{-1mm}
\widehat{F_\lambda}:
\cS^{n-1}\times[0,T]\to\R^n: 
(d^0,t)\mapsto\widehat{x}_{d^0}^\lambda(t). 
\end{align}
Then, for all $t\in[0,T]$,
\begin{equation}\label{eq:hull_F:rect_lambda:subsets}
\hull\left(\uwidehat{F_\lambda}(\cS^{n-1},t)\right)
\subseteq
\hull(\X_t)
\subseteq
\hull\left(\widehat{F_\lambda}(\cS^{n-1},t)\right)
\end{equation}
and as $\lambda\to\infty$,
\begin{subequations}
\begin{align}
\label{eq:hull_F_lambda_converges_under}
d_H\left(
\hull(\X_t),
\hull\left(\uwidehat{F_\lambda}(\cS^{n-1},t)\right)
\right)\to 0,
\\[1mm]
\label{eq:hull_F_lambda_converges_over}
d_H\left(
\hull(\X_t),
\hull\left(\widehat{F_\lambda}(\cS^{n-1},t)\right)
\right)\to 0.
\end{align}
\end{subequations}
\end{tcolorbox}

\section{Proof of Theorem \ref{thm:hull_F:rect_lambda}}
\label{sec:rectangular_proof}
First, we describe the smooth set approximation %
used in Theorem \ref{thm:hull_F:rect_lambda}. 
Given $\bar{x}\in\R^n$ and $\delta\bar{x}\in\R^n$ with $\delta\bar{x}_i>0$ for all $i=1,\dots,n$, we define the hyper-rectangular set
\begin{align}
C&=
\left\{x\in\R^n: |x_i-\bar{x}_i|\leq \delta\bar{x}_i \text{ for all }i=1,\dots,n
\right\}
\nonumber
\\
&=
\left\{x\in\R^n: h(x)\leq 1\right\}
\label{eq:Crectangular}
\end{align}
with the continuous function $h:\R^n\to\R$ defined as
\begin{equation}\label{eq:C:h}
h(x)=
\|(x-\bar{x})\odot\delta\bar{x}^{-1}\|_\infty^2
=
\max_{i=1,\dots,n}\left(|x_i-\bar{x}_i|/\delta\bar{x}_i\right)^2.
\end{equation}
For any $\lambda>1$, we define the function $h_\lambda:\R^n\to\R$ as
\begin{equation}\label{eq:C:hlambda}
h_\lambda(x)=
\|(x-\bar{x})\odot\delta\bar{x}^{-1}\|_\lambda^2
=
\left(
\sum_{i=1}^n
\left|\frac{x_i-\bar{x}_i}{\delta\bar{x}_n}\right|^\lambda\right)^\frac{2}{\lambda},
\end{equation}
and, as shown in Figure \ref{fig:rectangle_smooth_approx}, the associated sets 
\begin{subequations}
\begin{align}
\uwidehat{C_\lambda} &=
\{x\in\R^n:h_\lambda(x)\leq 1\},
\label{eq:Clambda_under}
\\
\widehat{C_\lambda} &=
\{x\in\R^n:h_\lambda(x)\leq n^{\frac{2}{\lambda}}\}.
\label{eq:Clambda_over}
\end{align}
\end{subequations}
We define the map $n^{\partial C_\lambda}:(\R^n\setminus\{0\})\to\cS^{n-1}$ by 
\begin{equation}\label{eq:gauss_map:Clambda}
n^{\partial C_\lambda}(x)
=
\frac{(x-\bar{x})\odot|x-\bar{x}|^{\lambda-2}\odot\delta\bar{x}^{-\lambda}}{\||x-\bar{x}|^{\lambda-1}\odot\delta\bar{x}^{-\lambda}\|},
\end{equation}
and the maps 
$(n^{\partial\uwidehat{C_\lambda}})^{-1},
(n^{\partial\widehat{C_\lambda}})^{-1}:
\cS^{n-1}\to\R^n$ 
by 
\begin{subequations}
\begin{align}
\label{eq:gauss_map:Clambda_under:inverse}
\Big(n^{\partial\uwidehat{C_\lambda}}\Big)^{-1}(d)&=
\bar{x}+
\frac{d\odot|d|^{\frac{2-\lambda}{\lambda-1}}\odot\delta\bar{x}^{\frac{\lambda}{\lambda-1}}}
{\big\||d\odot\delta\bar{x}|^{\frac{1}{\lambda-1}}\big\|_\lambda},
\\
\label{eq:gauss_map:Clambda_over:inverse}
\left(n^{\partial\widehat{C_\lambda}}\right)^{-1}(d)&=
\bar{x}+
n^{\frac{1}{\lambda}}\frac{d\odot|d|^{\frac{2-\lambda}{\lambda-1}}\odot\delta\bar{x}^{\frac{\lambda}{\lambda-1}}}
{\big\||d\odot\delta\bar{x}|^{\frac{1}{\lambda-1}}\big\|_\lambda}.
\end{align}
\end{subequations}

\begin{figure}[!t]
\centering
\includegraphics[width=0.95\linewidth]{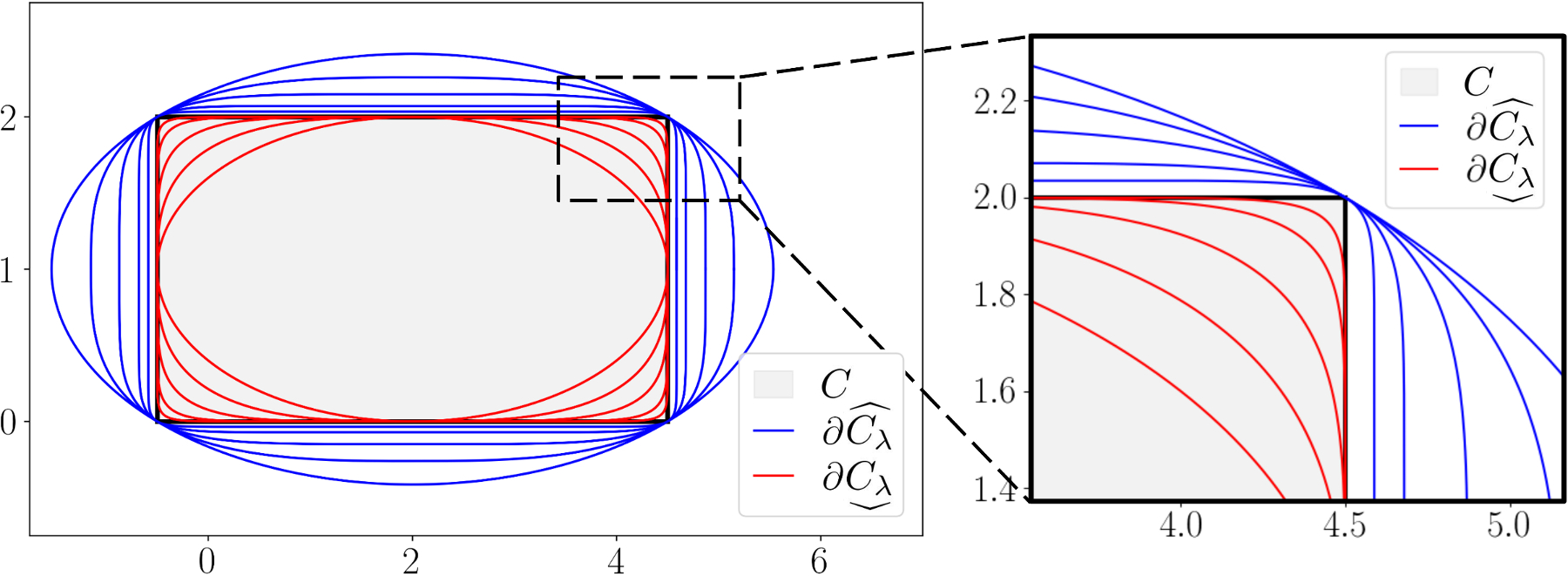}
\caption{Smooth under- and over-approximations of a rectangular set $C$ for different relaxation parameters $\lambda$. As $\lambda$ increases, the approximations converge to the set $C$.}
\label{fig:rectangle_smooth_approx}
\end{figure}

\begin{lemma}[The sets $(\uwidehat{C_\lambda},\widehat{C_\lambda})$ approximate $C$] 
\label{lem:Clambda}
Let $\lambda>1$ and define the sets 
$(C,\uwidehat{C_\lambda},\widehat{C_\lambda})$ %
as in \eqref{eq:Crectangular}-\eqref{eq:Clambda_over}.
\begin{itemize}
\item
The sets $(C,\uwidehat{C_\lambda},\widehat{C_\lambda})$ are convex, compact and
\begin{equation}
\uwidehat{C_\lambda}
\subseteq
C\subseteq
\widehat{C_\lambda}.
\end{equation}

\item
The sets $(\uwidehat{C_\lambda},\widehat{C_\lambda})$ approximate $C$ arbitrarily well by increasing $\lambda$:
\begin{subequations}
\begin{align}
d_H(\uwidehat{C_\lambda},C)\to 0\text{ as }\lambda\to\infty,
\\
d_H(\widehat{C_\lambda},C)\to 0\text{ as }\lambda\to\infty.
\end{align}

\item
The boundaries $(\partial\uwidehat{C_\lambda},\partial\widehat{C_\lambda})$ are ovaloids. 
Their Gauss maps are given by $n^{\partial\uwidehat{C_\lambda}}(x)=n^{\partial C_\lambda}(x)$ and $n^{\partial\widehat{C_\lambda}}(x)=n^{\partial C_\lambda}(x)$, where the map $n^{\partial C_\lambda}$ is defined in \eqref{eq:gauss_map:Clambda}. 
Their inverse Gauss maps $(n^{\partial\uwidehat{C_\lambda}})^{-1}$ and $(n^{\partial\widehat{C_\lambda}})^{-1}$ are given by
\eqref{eq:gauss_map:Clambda_under:inverse} and \eqref{eq:gauss_map:Clambda_over:inverse}.
\end{subequations}
\end{itemize}
\end{lemma}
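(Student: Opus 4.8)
The plan is to establish the three bullets in order, reducing everything to elementary $\lambda$-norm inequalities and to the fact, established in Example~\ref{example:gauss_maps}, that the standard $\lambda$-ball $C_\lambda=\{y:\|y\|_\lambda^2\le1\}$ has an ovaloid boundary. Throughout I write $v(x)=(x-\bar{x})\odot\delta\bar{x}^{-1}$, so that $h(x)=\|v(x)\|_\infty^2$ and $h_\lambda(x)=\|v(x)\|_\lambda^2$. Since $x\mapsto v(x)$ is affine and $\|\cdot\|_\infty,\|\cdot\|_\lambda$ are norms, both $h$ and $h_\lambda$ are convex (a norm is convex and $t\mapsto t^2$ is convex and nondecreasing on $[0,\infty)$) and coercive; hence $C$, $\uwidehat{C_\lambda}$, and $\widehat{C_\lambda}$ are convex and compact as sublevel sets of continuous coercive convex functions. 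For the inclusions I would apply the comparison $\|u\|_\infty\le\|u\|_\lambda\le n^{1/\lambda}\|u\|_\infty$ (valid for all $u\in\R^n$, $\lambda\ge1$) to $u=v(x)$ and square, obtaining $h(x)\le h_\lambda(x)\le n^{2/\lambda}h(x)$: the left inequality gives $h_\lambda(x)\le1\Rightarrow h(x)\le1$, i.e. $\uwidehat{C_\lambda}\subseteq C$, and the right gives $h(x)\le1\Rightarrow h_\lambda(x)\le n^{2/\lambda}$, i.e. $C\subseteq\widehat{C_\lambda}$.

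For the convergence, the key observation is that $\uwidehat{C_\lambda}=T(C_\lambda)$ and $\widehat{C_\lambda}=T_n(C_\lambda)$ for the affine maps $T(y)=\bar{x}+\delta\bar{x}\odot y$ and $T_n(y)=\bar{x}+n^{1/\lambda}\delta\bar{x}\odot y$; equivalently $\widehat{C_\lambda}=\bar{x}+n^{1/\lambda}(\uwidehat{C_\lambda}-\bar{x})$ is the dilation of $\uwidehat{C_\lambda}$ by the factor $n^{1/\lambda}$ about $\bar{x}$. Writing any $x\in\widehat{C_\lambda}$ as $x=\bar{x}+n^{1/\lambda}(y-\bar{x})$ with $y\in\uwidehat{C_\lambda}\subseteq C$, and using $\|y-\bar{x}\|\le\|\delta\bar{x}\|$ on $C$, yields $\|x-y\|\le(n^{1/\lambda}-1)\|\delta\bar{x}\|$, so $d_H(\uwidehat{C_\lambda},\widehat{C_\lambda})\le(n^{1/\lambda}-1)\|\delta\bar{x}\|\to0$ as $\lambda\to\infty$. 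Because the three sets are nested, both $d_H(\uwidehat{C_\lambda},C)$ and $d_H(\widehat{C_\lambda},C)$ are dominated by $d_H(\uwidehat{C_\lambda},\widehat{C_\lambda})$, hence both tend to $0$.

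For the ovaloid and Gauss-map claims, note $T,T_n$ are affine diffeomorphisms, since their linear parts $\diag(\delta\bar{x})$ and $n^{1/\lambda}\diag(\delta\bar{x})$ are invertible ($\delta\bar{x}_i>0$). I would argue that an affine diffeomorphism carries an ovaloid to an ovaloid: for a smooth convex level set $\{\rho=c\}$ the principal curvatures are positive exactly when $\nabla^2\rho$ restricted to the tangent space is positive definite, and precomposing $\rho$ with $T^{-1}$ replaces $\nabla^2\rho$ by $A^{-\top}\nabla^2\rho\,A^{-1}$ while mapping tangent spaces by $A^{-1}$, which preserves positive-definiteness. Since $\partial C_\lambda$ is an ovaloid (Example~\ref{example:gauss_maps}), so are $\partial\uwidehat{C_\lambda}=T(\partial C_\lambda)$ and $\partial\widehat{C_\lambda}=T_n(\partial C_\lambda)$. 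As both boundaries are level sets of $h_\lambda$, formula \eqref{eq:gauss_map_level_set} applies: computing $\nabla h_\lambda(x)=2\|v(x)\|_\lambda^{2-\lambda}\,(x-\bar{x})\odot|x-\bar{x}|^{\lambda-2}\odot\delta\bar{x}^{-\lambda}$ and normalizing, the positive scalar prefactor cancels and leaves exactly \eqref{eq:gauss_map:Clambda}, which is independent of the level $c$; this is precisely why $n^{\partial\uwidehat{C_\lambda}}$ and $n^{\partial\widehat{C_\lambda}}$ both equal $n^{\partial C_\lambda}$. The inverse formulas \eqref{eq:gauss_map:Clambda_under:inverse}--\eqref{eq:gauss_map:Clambda_over:inverse} I would then confirm by direct substitution, checking that $n^{\partial C_\lambda}$ composed with each candidate inverse returns $d$ and that the image satisfies $h_\lambda=1$ and $h_\lambda=n^{2/\lambda}$ respectively (equivalently, via \eqref{eq:gauss_map_level_set:inv}).

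The main obstacle is the ovaloid claim: the affine reduction is clean, but it rests entirely on the strict positivity of all principal curvatures of $\partial C_\lambda$ for every $\lambda>1$, which is delicate near the coordinate hyperplanes, where a coordinate of $y$ vanishes and the Hessian of $y\mapsto\sum_i|y_i|^\lambda$ degenerates for $1<\lambda<2$. This is exactly the smoothness point settled in Example~\ref{example:gauss_maps}, on which I rely; the remaining two parts are routine bookkeeping with the norm comparison and the dilation identity.
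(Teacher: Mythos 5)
Your first bullet follows the paper's own argument (convexity of norms plus the comparison $\|u\|_\infty\le\|u\|_\lambda\le n^{1/\lambda}\|u\|_\infty$), but your second bullet takes a genuinely different and more elementary route. The paper proves the Hausdorff convergence by developing a general result on sub-level sets (Lemma \ref{lem:sublevel_set_hausdoorff_to_zero}: uniform convergence of $h_\lambda$ to $h$ plus an interior-point condition, proved via a compactness/subsequence argument). You instead note that $\widehat{C_\lambda}=\bar{x}+n^{1/\lambda}(\uwidehat{C_\lambda}-\bar{x})$ and squeeze $C$ between the two dilates; since for nested compact sets $A\subseteq B\subseteq D$ one has $d_H(A,B)\le d_H(A,D)$ and $d_H(B,D)\le d_H(A,D)$, both distances are bounded by $d_H(\uwidehat{C_\lambda},\widehat{C_\lambda})\le(n^{1/\lambda}-1)\|\delta\bar{x}\|$. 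This is correct, bypasses the auxiliary lemma entirely, and yields an explicit rate where the paper's argument is only asymptotic — a real simplification for this particular family of sets.

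The third bullet, however, has a genuine gap. Your affine reduction is sound (positive-definiteness of the second fundamental form is preserved under affine diffeomorphisms, with the curvatures rescaled by positive factors), and the Gauss-map and inverse-Gauss-map computations match the paper's. But the entire argument bottoms out on the claim that $\partial C_\lambda=\{y:\|y\|_\lambda^2=1\}$ is an ovaloid for every $\lambda>1$, which you attribute to Example \ref{example:gauss_maps}. That example only records the Gauss-map formulas; it nowhere verifies strict positivity of the principal curvatures, and the paper's actual verification is carried out \emph{inside the proof of this very lemma}: it computes $\nabla n^{\partial\uwidehat{C_\lambda}}(x)$ explicitly and shows that the shape operator satisfies $v^\top\nabla n^{\partial\uwidehat{C_\lambda}}(x)\,v=\tfrac{\lambda-1}{\||x|^{\lambda-1}\|}\sum_{i}|x_i|^{\lambda-2}v_i^2$ for tangent $v$, concluding positivity. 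Citing the example is therefore circular, and the one step you yourself flag as the main obstacle is left unproven; you need to supply this shape-operator (equivalently, tangential-Hessian) computation. Note also that your diagnosis of where the difficulty sits is off: for $1<\lambda<2$ the second derivative $|y_i|^{\lambda-2}$ \emph{blows up} near a coordinate hyperplane (the issue there is $C^2$-regularity of $h_\lambda$), whereas the curvature form actually threatens to \emph{degenerate} for $\lambda>2$ at boundary points with some $x_i=0$, where $e_i$ is tangent and $|x_i|^{\lambda-2}v_i^2$ contributes nothing — this is precisely the delicate case any complete proof must address.
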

The proof of Lemma \ref{lem:Clambda} is provided in Appendix \ref{sec:lem:Clambda:proof}. With this result, we prove Theorem \ref{thm:hull_F:rect_lambda}.

\textit{Proof of Theorem \ref{thm:hull_F:rect_lambda}:} 
First, we define the sets
\begin{subequations}\label{eq:Wlambda_X0lambda_approximations}
\begin{align}
\uwidehat{\W^\lambda}&=\{w\in\R^n:h_\lambda^\W(w)\leq 1\},
\\
\uwidehat{\X_0^\lambda}&=\{x\in\R^n:h_\lambda^\X(x)\leq 1\},
\\
\widehat{\W^\lambda}&=\{w\in\R^n:h_\lambda^\W(w)\leq n^{\frac{2}{\lambda}}\},
\\
\widehat{\X_0^\lambda}&=\{x\in\R^n:h_\lambda^\X(x)\leq n^{\frac{2}{\lambda}}\},
\end{align}
\end{subequations}
where $h_\lambda^\W(w)=
\|w \odot \delta\bar{w}^{-1}\|_\lambda^2$ and $h_\lambda^\X(x)=
\|(x-\bar{x}_0)\odot\delta\bar{x}^{-1}_0\|_\lambda^2$ as in \eqref{eq:C:hlambda}. 
By Lemma \ref{lem:Clambda}, 
$(\uwidehat{\W^\lambda},\widehat{\W^\lambda})$ satisfy Assumption \ref{assumption:W}, 
$(\uwidehat{\X_0^\lambda},\widehat{\X_0^\lambda})$ satisfy Assumption \ref{assumption:X0},
\begin{subequations}
\begin{alignat}{2}
\label{eq:Wlambda_X0lambda_under_over_approx}
&\quad\, 
\uwidehat{\W^\lambda}
\subseteq\W\subseteq
\widehat{\W^\lambda},
\quad
\uwidehat{\X_0^\lambda}
\subseteq\X_0\subseteq
\widehat{\X_0^\lambda},
\\
\label{eq:Wlambda_converges_to_W}
&\lim_{\lambda\to 0}
d_H(\uwidehat{\W^\lambda},\W)
=
\lim_{\lambda\to 0}
d_H(\widehat{\W^\lambda},\W)\,
=0,
\\
\label{eq:X0lambda_converges_to_X0}
&\lim_{\lambda\to 0}
d_H(\uwidehat{\X_0^\lambda},\X_0)
=
\lim_{\lambda\to 0}
d_H(\widehat{\X_0^\lambda},\X_0)
=0,
\end{alignat}
\end{subequations}
and the inverse Gauss maps of $(\uwidehat{\W^\lambda},\uwidehat{\X_0^\lambda},\widehat{\W^\lambda},\widehat{\X_0^\lambda})$ 
are given by 
(\eqref{eq:gauss_map:Wlambda_under},
\eqref{eq:gauss_map:X0lambda_under},
\eqref{eq:gauss_map:Wlambda_over},
\eqref{eq:gauss_map:X0lambda_over}). 

Second, we define the reachable sets
\begin{align*}
\uwidehat{\X_t^\lambda}=
\left\lbrace
x_{(w,x^0)}(t): 
w\in L^\infty([0,T],\uwidehat{\W^\lambda}),
\, 
x^0\in\uwidehat{\X_0^\lambda}
\right\rbrace,
\\[1mm]
\widehat{\X_t^\lambda}=
\left\lbrace
x_{(w,x^0)}(t): 
w\in L^\infty([0,T],\widehat{\W^\lambda}),
\, 
x^0\in\widehat{\X_0^\lambda}
\right\rbrace.
\end{align*}
By definition and thanks to \eqref{eq:Wlambda_X0lambda_under_over_approx}, for all $t\in[0,T]$,
\begin{equation}
\label{eq:Xlambda_under_over_approx}
\uwidehat{\X_t^\lambda}\subseteq\X_t\subseteq\widehat{\X_t^\lambda}.
\end{equation}
By Theorem \ref{thm:hull_F}, for all $t\in[0,T]$,
\begin{subequations}
\begin{align}
\label{eq:hull_Flambda_equals_hull_Xt_under}
\hull\left(\uwidehat{\X_t^\lambda}\right)
&=
\hull\left(\uwidehat{F_\lambda}(\cS^{n-1},t)\right),
\\
\label{eq:hull_Flambda_equals_hull_Xt_over}
\hull\left(\widehat{\X_t^\lambda}\right)
&=
\hull\left(\widehat{F_\lambda}(\cS^{n-1},t)\right).
\end{align}
\end{subequations}
Combining \eqref{eq:Xlambda_under_over_approx}, \eqref{eq:hull_Flambda_equals_hull_Xt_under}, and \eqref{eq:hull_Flambda_equals_hull_Xt_over}  gives \eqref{eq:hull_F:rect_lambda:subsets}.
Combining \eqref{eq:Wlambda_converges_to_W},  \eqref{eq:X0lambda_converges_to_X0} and a standard continuity result (see Lemma \ref{lem:stability_reachset_errors} in the appendix) %
gives  \eqref{eq:hull_F_lambda_converges_under} and \eqref{eq:hull_F_lambda_converges_over}.
\phantom{a} \hfill $\blacksquare$

\section{Approximate characterization for non-invertible $g(t,x)$}\label{sec:noninvertible}
Assumption \ref{assumption:g} states that $g(t,x)$ is invertible, so Theorem \ref{thm:hull_F} does not directly apply to problems that have more states than disturbances. To relax this assumption, given two integers $m<n$, we consider the system
\begin{align}
\label{eq:ODE:not_full_rank}
\dot{x}(t)&=f(t,x(t))+\sum_{i=1}^mg_i(t,x(t))w_i(t)
\nonumber
\\
&=f(t,x(t))+g(t,x(t))w(t),
\ t\in[0,T],
\end{align}
where 
$x(0)\in\X_0$ with $\X_0\subset\R^n$ satisfying Assumption \ref{assumption:X0}, 
$w\in L^\infty([0,T],\W)$ with $\W\subset\R^m$ satisfying Assumption \ref{assumption:W}, $f$ satisfies Assumption \ref{assumption:f}, and $g(t,x)=(g_1,\dots,g_m)(t,x)\in\R^{n\times m}$. The reachable sets of \eqref{eq:ODE:not_full_rank} are   defined as in \eqref{eq:Y_reachable_set} and are denoted by $\X_t$.  We relax the invertibility assumption on $g$ (Assumption \ref{assumption:g}) as follows.

\begin{tcolorbox}[
title={\textit{Assumption \customlabel{assumption:g:full_rank}{7}($g$ is full rank)}
\phantom{$a^1$}
},
coltitle=black,
boxrule=2pt,
colframe=blue!6, %
enhanced, colback=blue!2, boxsep=0pt, left=6pt, right=6pt]
$g(t,x)$ %
is full rank for all $(t,x)\in[0,T]\times\R^n$.
\end{tcolorbox}
Assumption \ref{assumption:g:full_rank} is standard and holds in many  practical applications. 
By appropriately completing the range of $g$, we approximate the system  \eqref{eq:ODE:not_full_rank} with a system that has similar reachable sets and satisfies Assumption \ref{assumption:f}-\ref{assumption:X0}. 
First, we rely on the choice of a particular set $\widehat{\W}\subset\R^n$. Below,   
$\pi(w_1,\dots,w_m,w_{m+1},\dots,w_n)=(w_1,\dots,w_m)$ denotes the projection map.

\begin{figure}[!t]
\centering
\includegraphics[width=0.8\linewidth]{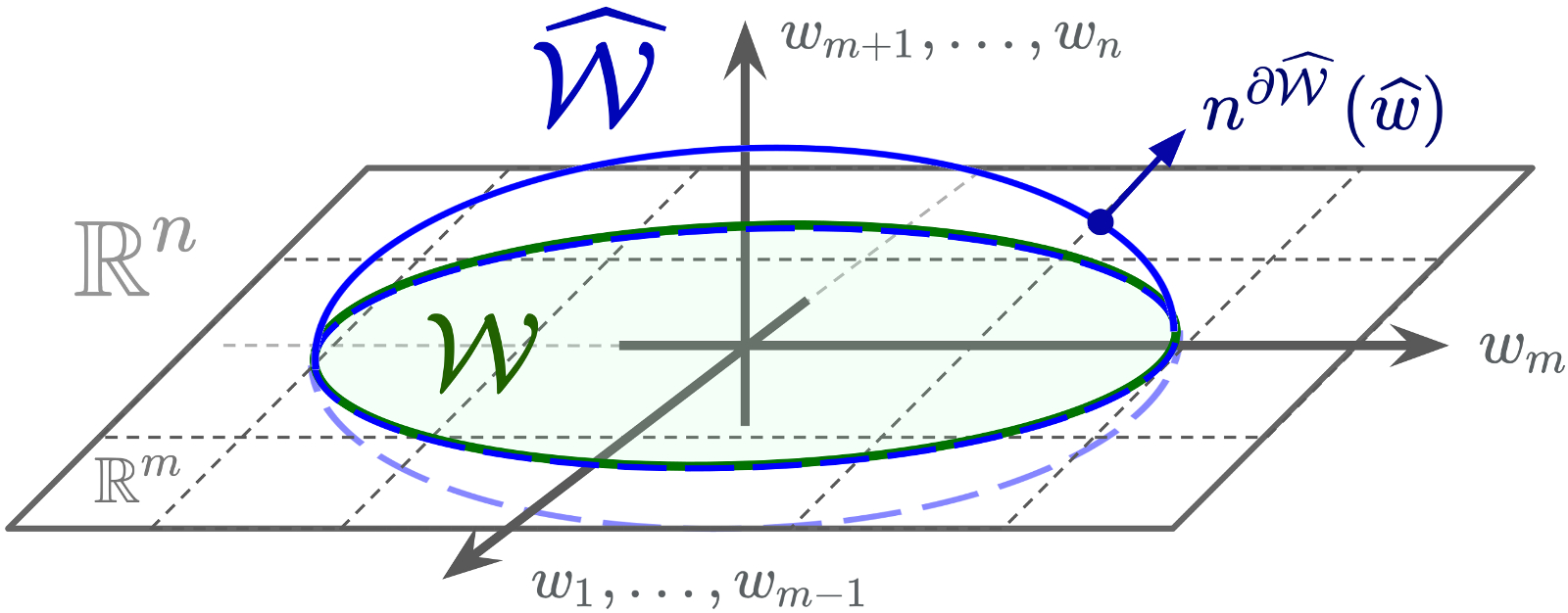}
\caption{Smooth approximation $\widehat{\W}$ satisfying Assumption \ref{assumption:Wbar:smooth_fulldim}.}
\label{fig:noninvertible:What}
\end{figure}

\begin{tcolorbox}[
title={\textit{Assumption \customlabel{assumption:Wbar:smooth_fulldim}{8}($\widehat{\W}$ smoothly approximates $\W$ in $\R^n$)}
},
coltitle=black,
boxrule=2pt,
colframe=blue!6, %
enhanced, colback=blue!2, boxsep=0pt, left=6pt, right=6pt]
The set $\widehat{\W}\subset\R^n$ satisfies Assumption \ref{assumption:W}. Moreover, 
$\W\times\{0\}\subset\widehat{\W}$ and $\pi(\widehat{\W})=\W$.  %
\end{tcolorbox}

\begin{tcolorbox}[
title={\textit{Example \customlabel{example:gauss_maps:not_full_rank}{3}(%
Set $\widehat{\W}$ satisfying 
Assumption \ref{assumption:Wbar:smooth_fulldim})}},
colbacktitle=gray!20, 
coltitle=black,
boxrule=3pt,
colframe=gray!20, %
breakable,
boxsep=0pt, before skip=0pt, after skip=0pt]
If $\W=B(0,1)\subset\R^m$, then $\widehat{\W}=B(0,1)\subset\R^n$ satisfies Assumption \ref{assumption:Wbar:smooth_fulldim}. More generally, if 
the set
$$
\W=\{w\in\R^m:h(w)\leq 1\}
$$
satisfies Assumption \ref{assumption:W}, then the set $\widehat{\W}$ defined as
$$
\widehat{\W}=\left\{w\in\R^n:h(w_{1:m})+\frac{1}{2}\|w_{m+1:n}\|^2\leq 1\right\}
$$
satisfies Assumption \ref{assumption:Wbar:smooth_fulldim}, where we use the notation $w=(w_{1:m},w_{m+1:n})\in\R^n$. If also $h(\nabla h^{-1}(n^{\partial\W}(w)))=1/\|\nabla h(w)\|^2$ for all $w\in\partial\W$ (see Example \ref{example:gauss_maps} and Appendix \ref{apdx:sec:gauss_map_level_set:inv} for details), then the inverse Gauss map of  $\partial\widehat{\W}$ is (see \eqref{eq:gauss_map_level_set:inv})
$$
\left(n^{\partial\widehat{\W}}\right)^{-1}(d)
=
\nabla\widehat{h}^{-1}
\Big(
d\, /\, \widehat{h}\big(\nabla\widehat{h}^{-1}(d)\big)^{\frac{1}{2}}
\Big),
$$
where $\widehat{h}(w)=h(w_{1:m})+\frac{1}{2}\|w_{m+1:n}\|^2$ and 
$$
\nabla\widehat{h}^{-1}(d)=\left(\nabla h^{-1}(d_{1:m}),d_{m+1:n}\right).
$$
\end{tcolorbox}

Second, we define $\widehat{g}_\epsilon:\R\times\R^n\to\R^{n\times n}$ by
\begin{equation}\label{eq:g_bar_epsilon}
\widehat{g}_\epsilon(t,x)
=
\begin{bmatrix}
g_1\ \, \dots\ \, g_m\ \  
\epsilon g_{m+1}\ \, \dots\ \, \epsilon g_n
\end{bmatrix}(t,x),
\end{equation}
where $\epsilon>0$ and the functions $g_i:\R\times\R^n\to\R^n$ with $i=m+1,\dots,n$ are chosen as follows. %
\begin{tcolorbox}[
title={\textit{Assumption \customlabel{assumption:g_bar_epsilon}{9}($\|g_i\|=1$ and $\widehat{g}_\epsilon(t,x)$ is invertible)}
},
coltitle=black,
boxrule=2pt,
colframe=blue!6, %
enhanced, colback=blue!2, boxsep=0pt, left=6pt, right=6pt]
The functions $g_i:\R\times\R^n\to\R^n$ satisfy    $\|g_i(t,x)\|=1$ for all $i=m+1,\dots,n$ and are such that $\widehat{g}_\epsilon(t,x)$ is invertible for all $(t,x)\in[0,T]\times\R^n$.
\end{tcolorbox}
For $g(t,x)$ that is constant and satisfies Assumption \ref{assumption:g:full_rank}, choosing the maps $(g_{m+1},\dots,g_n)$ with constant values sampled at random on the sphere $\cS^{n-1}$ (from a uniform distribution) suffices to satisfy Assumption \ref{assumption:g_bar_epsilon}. We refer to \cite{Silva2010} for insightful discussion related to this assumption and to Section \ref{sec:numerical:dubins} for an example.

Third, we define the extended system
\begin{equation}
\label{eq:ODE:not_full_rank:extended}
\dot{x}(t)=f(t,x(t))+\widehat{g}_\epsilon(t,x(t))\widehat{w}(t),
\ t\in[0,T],
\end{equation}
where $x(0)\in\X_0$ and $\widehat{w}\in L^\infty([0,T],\widehat{\W})$, with associated reachable sets denoted by $\widehat{\X}_t^\epsilon$. 
The system \eqref{eq:ODE:not_full_rank:extended} satisfies the assumptions of Theorem \ref{thm:hull_F}. This suggests defining the following augmented ODE:

\begin{tcolorbox}[
title={\center\customlabel{PMPODEepsilon_full_rank}{$\textbf{ODE}^\epsilon_{d^0}$}},
coltitle=black,
boxrule=2pt,
colframe=blue!6, %
enhanced, colback=blue!2, boxsep=1pt, left=6pt, right=6pt]
\vspace{-3mm}
\begin{align*}
\dot{x}(t)&=f(t,x(t))+\widehat{g}_\epsilon(t,x(t))\widehat{w}(t),
\quad t\in[0,T],
\\
\dot{p}(t)&=
-(\nabla f(t,x(t))+\nabla\widehat{g}_\epsilon(t,x(t))\widehat{w}(t))^\top p(t),
\\
\widehat{w}(t)&=
\left(n^{\partial\widehat{\W}}\right)^{-1}\left(\frac{\widehat{g}_\epsilon(t,x(t))^\top p(t)}{\|\widehat{g}_\epsilon(t,x(t))^\top p(t)\|}\right),
\\
x(0)&=
\left(n^{\partial\X_0}\right)^{-1}(d^0), \quad 
p(0)=d^0,
\end{align*}
\end{tcolorbox}
As stated below, solutions to \PMPODEepsilonfullrank{d^0} characterize the convex hulls of the reachable sets $\X_t$ of the system \eqref{eq:ODE:not_full_rank} arbitrarily well by selecting $\epsilon$ small-enough.

\begin{tcolorbox}[
title={\textit{Theorem  \customlabel{thm:hull_F:not_full_rank}{4}(Approximate characterization for non-invertible $g(t,x)$)}},
colbacktitle=blue!10, 
coltitle=black,
breakable,
boxrule=3pt,
colframe=blue!10, %
enhanced, colback=blue!4, boxsep=0pt, left=6pt, right=6pt]
Consider the dynamical system in 
\eqref{eq:ODE:not_full_rank} with $m<n$. Assume that $f$ and $g$ satisfy Assumptions \ref{assumption:f} and \ref{assumption:g:full_rank}, 
and that $\W$ and $\X_0$ satisfy Assumptions \ref{assumption:W} and \ref{assumption:X0}. 
Choose a set $\widehat{\W}\subset\R^n$ satisfying
Assumption \ref{assumption:Wbar:smooth_fulldim}, $\epsilon>0$, and $(n-m)$ functions $g_i:\R\times\R^n\to\R^n$ 
satisfying Assumption \ref{assumption:g_bar_epsilon}. 
Given any direction $d^0\in\cS^{n-1}$, define $(x_{d^0}^\epsilon,p_{d^0}^\epsilon)$ as the solution to \PMPODEepsilonfullrank{d^0}.
Define the map
\begin{align}
\label{eq:Fepsilon_full_rank}
\hspace{-1mm}
F_\epsilon:
\cS^{n-1}\times[0,T]\to\R^n: 
(d^0,t)\mapsto x_{d^0}^\epsilon(t). 
\end{align}
Then, for all $t\in[0,T]$,
\begin{equation}\label{eq:hull_F:not_full_rank:subsets}
\hull(\X_t)
\subseteq
\hull\left(F_\epsilon(\cS^{n-1},t)\right),
\end{equation}
and there exists a constant $C_T^{\widehat{\W}}\geq 0$ such that
\begin{align}
\label{eq:hull_F:not_full_rank:Feps_converges_under}
d_H\left(
\hull(\X_t),
\hull\left(F_\epsilon(\cS^{n-1},t)\right)
\right)\leq C_T^{\widehat{\W}}\epsilon.
\end{align}
In particular, $\lim\limits_{\epsilon\to 0}
d_H(
\hull(\X_t),
\hull(F_\epsilon(\cS^{n-1},t))
)=0$.
\end{tcolorbox}
\begin{remark}[Error \rev{bounds and stable integration}] \rev{Error} bounds for obtaining convex hull approximations of the reachable sets of system 
\eqref{eq:ODE:not_full_rank} with Algorithm \ref{alg:1} \rev{can be derived by
combining \eqref{eq:error_bound} and \eqref{eq:hull_F:not_full_rank:Feps_converges_under}}. 

\rev{The disturbances $\widehat{w}(t)$ may become discontinuous as $\epsilon\to 0$, see  \cite{Silva2010}. Thus, a stable integration scheme should be used to integrate \PMPODEepsilonfullrank{d^0} for small values of $\epsilon$, and similarly for \PMPODElambdaunder{d^0} and \PMPODElambdaover{d^0} for large values of $\lambda$.}
\end{remark}

\begin{proof}
First, we define the intermediate system
\begin{equation}
\label{eq:ODE:not_full_rank:intermediate}
\dot{x}(t)=f(t,x(t))+\widehat{g}(t,x(t))\widehat{w}(t),
\ t\in[0,T] 
\end{equation}
with reachable sets $\widehat{\X}_t$, where $x(0)\in\X_0$, $\widehat{w}\in L^\infty([0,T],\widehat{\W})$, and the map $\widehat{g}:\R\times\R^n\to\R^{n\times n}$ is defined by \rev{$\widehat{g}(t,x)
=
[g_1(t,x),\dots,g_m(t,x),
0,\dots,0]$.}
%

\rev{Since} for any $\widehat{w}(t)\in\widehat{\W}$,  
we have $
\widehat{g}(t,x(t))\widehat{w}(t)=\sum_{i=1}^mg_i(t,x(t))\widehat{w}_i(t)
$  
and $(\widehat{w}_1,\dots,\widehat{w}_m)(t)\in\W$, 
the reachable sets $\widehat{\X}_t$ of system  \eqref{eq:ODE:not_full_rank:intermediate} satisfy\footnote{Note that Theorem \ref{thm:hull_F} does not give information about $\widehat{\X}_t$, since $\widehat{g}(t,x)$ is not invertible.}
\begin{equation}
\label{eq:hull_F:not_full_rank:barXt_is_Xt}
\widehat{\X}_t=\X_t 
\text{ for all } t\in[0,T].
\end{equation}

Second, given any $w(t)=(w_1,\dots,w_m)(t)\in\W$, the extended disturbance $\widehat{w}(t)=(w_1,\dots,w_m,0,\dots,0)(t)$ satisfies $\widehat{w}(t)\in\widehat{\W}$ and $\widehat{g}(t,x)\widehat{w}(t)=\widehat{g}_\epsilon(t,x)\widehat{w}(t)$. 
Thus, the reachable sets $\widehat{\X}_t^\epsilon$ of the system \eqref{eq:ODE:not_full_rank:extended} satisfy
\begin{equation}\label{eq:hull_F:not_full_rank:barXt_subset_barXteps}
\widehat{\X}_t\subseteq\widehat{\X}_t^\epsilon  
\text{ for all } t\in[0,T].
\end{equation}
Applying Theorem \ref{thm:hull_F} to the system \eqref{eq:ODE:not_full_rank:extended} gives $\hull(\widehat{\X}_t^\epsilon)=
\hull\left(F_\epsilon(\cS^{n-1},t)\right)$. Combining this last result with \eqref{eq:hull_F:not_full_rank:barXt_is_Xt} and \eqref{eq:hull_F:not_full_rank:barXt_subset_barXteps}
gives \eqref{eq:hull_F:not_full_rank:subsets}.

To show \eqref{eq:hull_F:not_full_rank:Feps_converges_under}, \rev{note} $
\|\widehat{g}-\widehat{g}_\epsilon\|_\infty\leq\epsilon
$, so $d_H(\widehat{\X}_t,\widehat{\X}_t^\epsilon)\leq C_T^{\widehat{\W}}\epsilon$ by a standard continuity result (Lemma \ref{lem:stability_reachset_errors} in the appendix). %
Thus, $d_H(\hull(\widehat{\X}_t),\hull(\widehat{\X}_t^\epsilon))\leq C_T^{\widehat{\W}}\epsilon$, so 
\rev{\eqref{eq:hull_F:not_full_rank:Feps_converges_under}} follows from  \eqref{eq:hull_F:not_full_rank:barXt_is_Xt} and $\hull(\widehat{\X}_t^\epsilon)=
\hull\left(F_\epsilon(\cS^{n-1},t)\right)$.
\end{proof}

\section{Results and applications}\label{sec:numerical}
We evaluate Algorithm \ref{alg:1} on three nonlinear systems, and use its reachable set estimates to design a robust model predictive controller (Algorithm \ref{alg:mpc}). 
Computation times are measured on a laptop with an 1.10GHz Intel Core i7-10710U CPU. Code to reproduce results is available at 
\scalebox{0.95}{\url{https://github.com/StanfordASL/chreach}}.

\subsection{Validating the relaxation schemes on Dubins car}\label{sec:numerical:dubins}
Consider the dynamical system with state $x(t)=(p_1,p_2,\theta)(t)\in\R^3$ evolving according to the ODE \rev{$\dot{x}(t)=(v\cos(\theta(t)),v\sin\theta(t)),\omega)+Gw(t)$ with $t\in[0,6]$,}   
$v=\omega=0.5$,  $G\in\R^{3\times m}$ a (constant) matrix, and $x(0)\in\X_0=\mathcal{E}(0,10^{-3}\cdot\text{diag}([1,1,10^{-1}]))$. 
We consider two different choices of $g$ and $\W$ that violate Assumption \ref{assumption:W} ($\partial\W$ is an ovaloid in $\R^n$) and Assumption \ref{assumption:g} ($g(t,x)$ is invertible), which allows us to validate the relaxation schemes in Section \ref{sec:rectangular} and \ref{sec:noninvertible}.

\subsubsection{Rectangular disturbances set}
Let $G=I_3$ and $\W\subset\R^3$ be the rectangular set in Assumption \ref{assumption:W_box} with $\delta\bar{w}=10^{-2}(1,1,1)$. 
As Assumption \ref{assumption:W} does not hold, we use the relaxation scheme in Section \ref{sec:rectangular}. We apply Algorithm \ref{alg:1} to \PMPODElambdaunder{d^0} and \PMPODElambdaover{d^0} for different values of the relaxation parameter $\lambda$. Theorem \ref{thm:hull_F:rect_lambda} ensures that these approximations inner- and outer-approximate the true convex hulls of the reachable sets, and that these approximations converge as the relaxation parameter $\lambda$ increases. Indeed, the results in Figure \ref{fig:dubins} (left) show that the approximations converge as $\lambda$ increases.

\subsubsection{Non-invertible $g(t,x)$}
Let $G^\top=\AverageSmallMatrix{
1&0&0\\0&0&1
}$ and $\W=B(0,10^{-2})\subset\R^2$. As Assumption \ref{assumption:g} does not hold, we use the relaxation scheme in Section \ref{sec:noninvertible}. We apply Algorithm \ref{alg:1} to \PMPODEepsilonfullrank{d^0} using the approximation of $\W$ in Example \ref{example:gauss_maps:not_full_rank} and $g_3(t,x)=(0,1,0)$. 
Since Assumptions \ref{assumption:Wbar:smooth_fulldim} and \ref{assumption:g_bar_epsilon} are satisfied, Theorem \ref{thm:hull_F:not_full_rank} guarantees that the approximations get closer to the true convex hulls of the reachable sets as the relaxation parameter $\epsilon$ decreases. Indeed, the results in Figure \ref{fig:dubins} (right) show that the approximations converge as $\epsilon$ decreases.

\begin{figure*}[!t]
\centering
\begin{minipage}{1\textwidth}
\begin{minipage}{0.67\textwidth}
\includegraphics[width=0.49\textwidth]{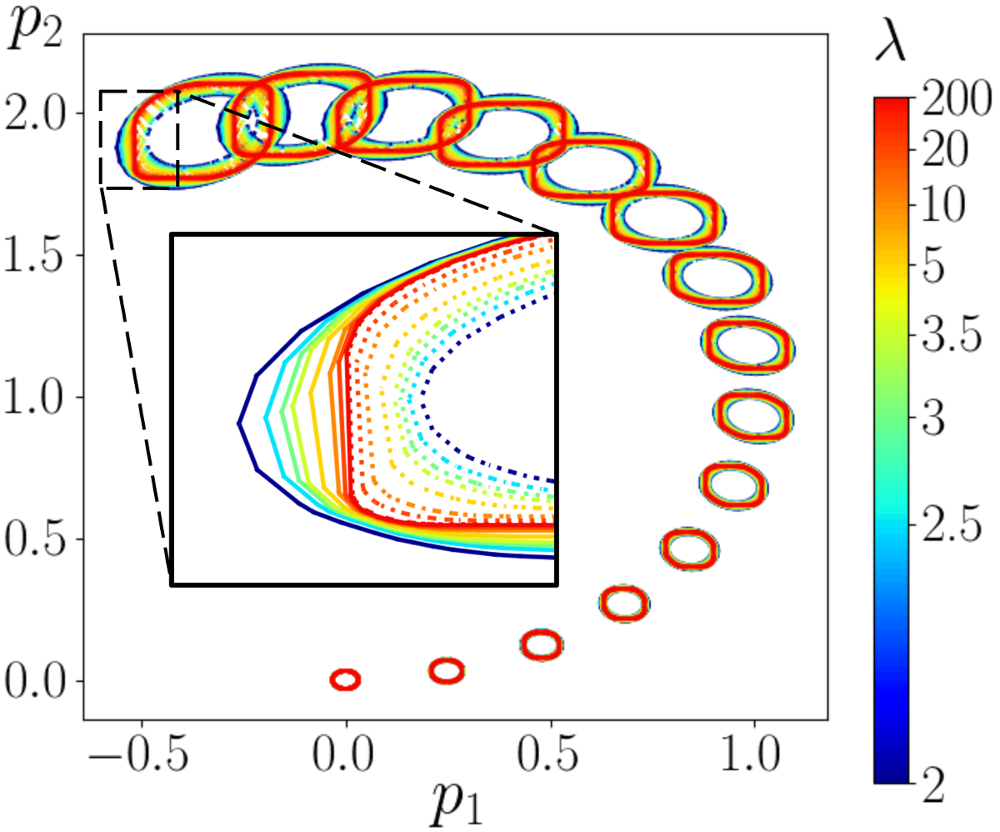}
\hspace{0.01\linewidth}
\includegraphics[width=0.49\linewidth]{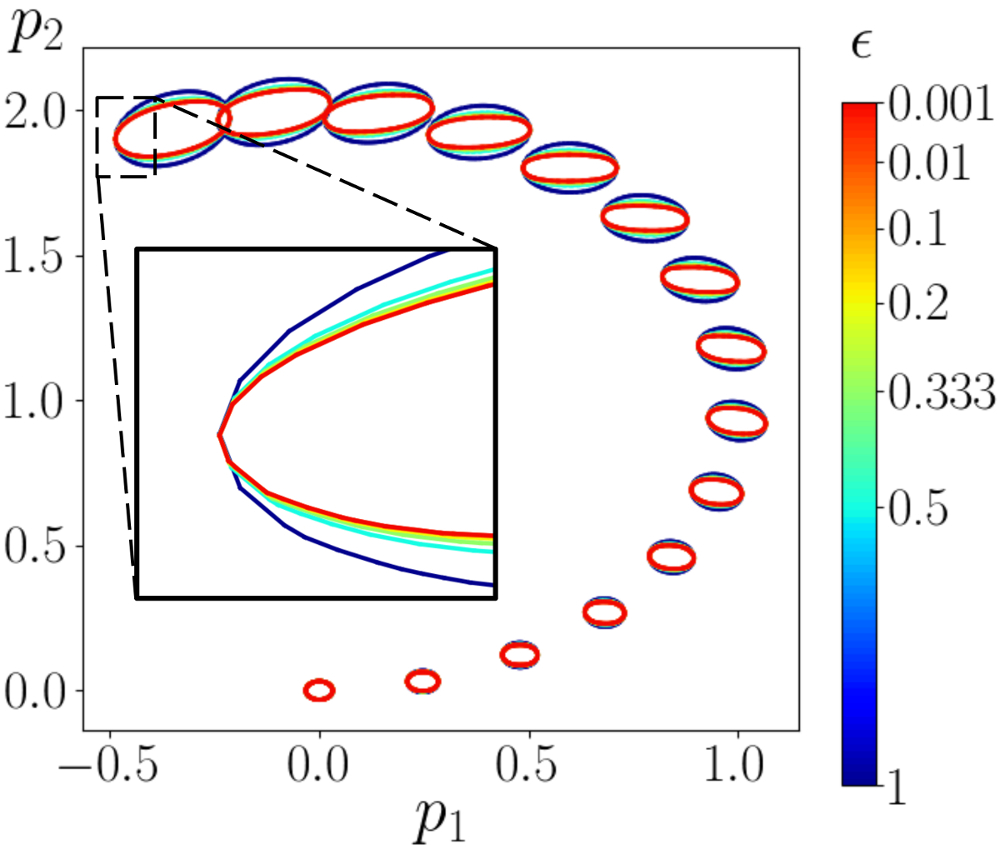}
\end{minipage}
\hspace{0.01\textwidth}
\begin{minipage}{0.3\textwidth}
\includegraphics[width=1\linewidth]{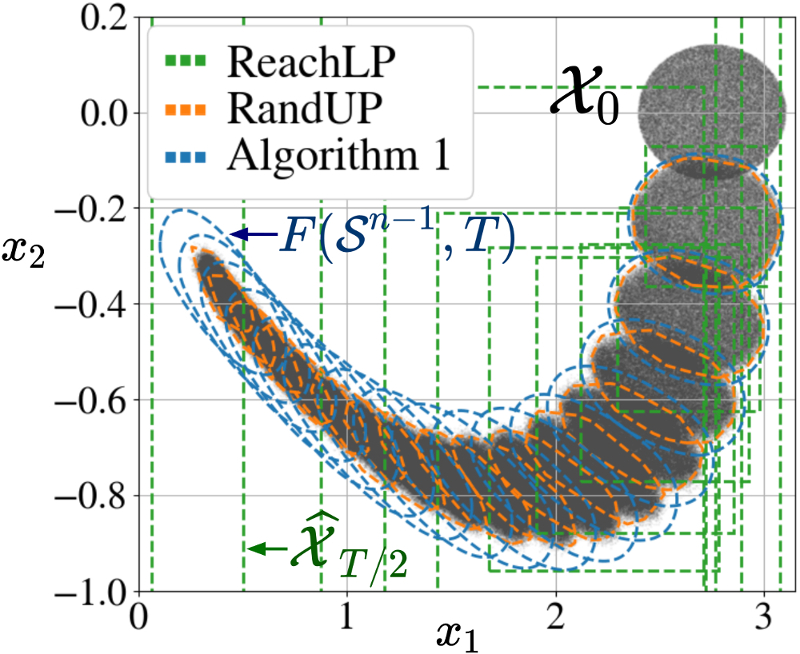}
\end{minipage}
\end{minipage}
\\
\begin{minipage}{1\textwidth}
\begin{minipage}{0.67\textwidth}
\caption{Output of Algorithm \ref{alg:1} on Dubins car dynamics (Section \ref{sec:numerical:dubins}) for different relaxation parameters. 
\textit{Left} (rectangular disturbance set): inner- (dashed lines) and outer- (full lines) approximations using relaxation scheme in Theorem \ref{thm:hull_F:rect_lambda}. 
\textit{Right} (non-invertible $g(t,x)$): outer-approximations using relaxation scheme in Theorem \ref{thm:hull_F:not_full_rank}.}
\label{fig:dubins}
\end{minipage}
\hspace{0.01\textwidth}
\begin{minipage}{0.3\textwidth}
\caption{Neural feedback loop analysis. Estimates of the reachable set convex hulls estimates ($M=10^3$) and Monte-Carlo samples (in gray).}
\label{fig:neural_net}
\end{minipage}
\end{minipage}
\vspace{-6mm}
\end{figure*}

\subsection{Neural feedback loop analysis}\label{sec:numerical:nn}
\begin{figure}[!t]
\centering
\includegraphics[width=0.515\linewidth,trim=5 5 5 5, clip]{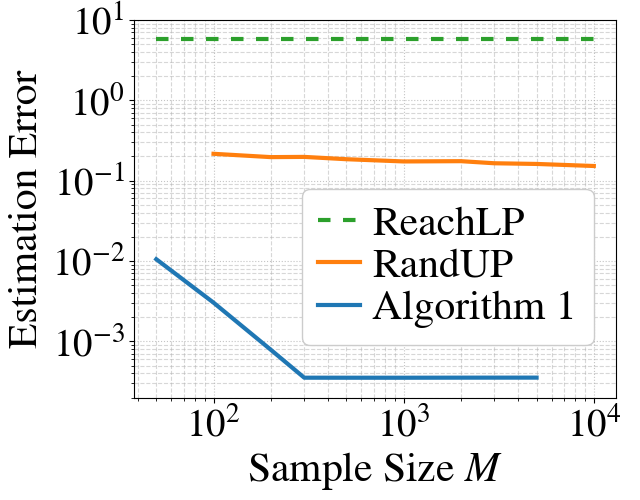}
\includegraphics[width=0.47\linewidth,trim=35 5 5 5, clip]{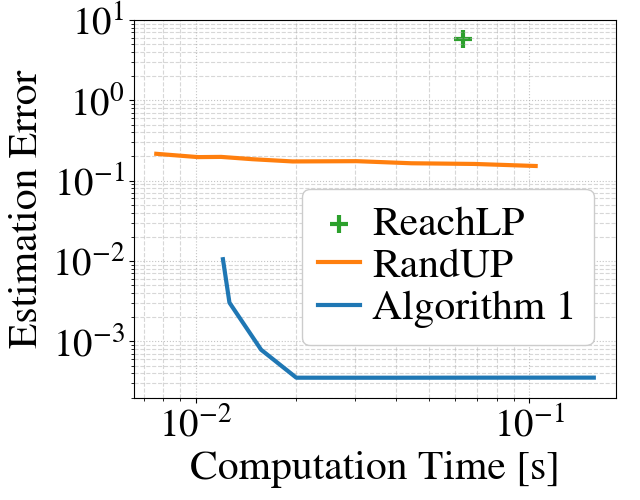}
\caption{Neural feedback loop analysis. Estimation error $d_H(\hull(\X_T),\hull(\{x_i\}_{i=1}^M)$ vs sample size and computation time.}
\label{fig:neural_net:comp_times}
\end{figure}

Consider the system $\dot{x}(t)=Ax(t)+B\pi(x(t))+w(t)$, where $\pi$ is a neural network and $n=2$. The dynamics parameters $(A,B,\pi)$ are as in \cite{LewBonalliEtAl2023} and \cite[Sec. VIII.A-C]{Everett21_journal}. 
The sets of initial conditions and disturbances $\X_0$ and $\W$ are ellipsoidal sets, %
see Appendix \ref{appendix:numerical:nn} for  details.

\subsubsection{Validating Theorem \ref{thm:hull_F}}
We run Algorithm \ref{alg:1} with $M=10^3$ samples of $d^0$ evenly covering the circle $\cS^{n-1}$. %
Then, we uniformly sample $10^5$ disturbances $w(t)\in\W$ at each timestep and evaluate the corresponding trajectories. We verify that all resulting trajectories are within the convex hulls computed by Algorithm \ref{alg:1}, which empirically validates Theorem \ref{thm:hull_F}. Thus, using Algorithm \ref{alg:1}, it suffices to sample initial directions $d^0\in\cS^{n-1}$ to reconstruct the convex hulls of the reachable sets.

\subsubsection{Comparisons}
We consider a first baseline that randomly samples disturbances $w(t)\in\W$ at each timestep and returns the convex hulls of closed-loop trajectories (\texttt{RandUP} \cite{LewJansonEtAl2022}). As ground truth, we use Algorithm \ref{alg:1} with a very large number of samples ($10^4$), which is justified by Theorem \ref{thm:hull_F}. We report results in Figures \ref{fig:neural_net} and \ref{fig:neural_net:comp_times}. Algorithm \ref{alg:1} returns estimates that are orders of magnitude more accurate than the baseline's estimates. Given a desired accuracy, Algorithm \ref{alg:1} is thus orders of magnitude faster than the baseline. This difference is a direct consequence of Theorem \ref{thm:hull_F}: only initial directions $d^0$ on the sphere $\cS^{n-1}$ need to be sampled using Algorithm \ref{alg:1}. In contrast, the baseline requires sampling a much larger number of variables that increases with the number of prediction timesteps, resulting in significantly worse sample complexity (see \cite{LewJansonEtAl2022,LewBonalliJansonPavone2023} for error bounds) and trajectories that are far from the reachable set boundaries. 
Recall that Algorithm \ref{alg:1} returns under-approximations of the convex hulls of
the reachable sets, since any extremal trajectory $x_{d^0}$
solving \PMPODE{d^0} is contained in the true reachable
sets. These simulations show that naive Monte-Carlo
estimates poorly approximate the convex hulls of the
reachable sets for this problem.

In Figure \ref{fig:neural_net} and \ref{fig:neural_net:comp_times}, we also report the over-approximations from a formal method (\texttt{ReachLP} \cite{Everett21_journal}) that significantly over-estimate the true reachable sets and is unable to capture the closed-loop stability of the system. In contrast, Algorithm \ref{alg:1} returns accurate approximations using a small number of samples. \rev{Other reachability methods may return more accurate approximations for this type of problems \cite{ManzanasLopez2023}, albeit potentially at the expense of additional computation time.}

\subsection{Robust MPC for attitude control of a spacecraft}\label{sec:numerical:spacecraft}
We design an attitude controller for a spacecraft with state $x=(q,\omega)\in\R^7$, control $u\in\R^3$, and  dynamics %
\begin{subequations}\label{eq:spacecraft:dynamics}
\begin{align}
\dot{q}(t)
&=
\Omega(\omega(t))q(t),
\\
\dot{\omega}(t) &= J^{-1}(u(t)-S(\omega(t))J\omega(t) + w(t)),
\label{eq:spacecraft:dynamics:omega}
\end{align}
\end{subequations}
with $x(0)=x^0=(q^0,\omega^0)$, $w(t)\in \W=B(0,10^{-2})$, inertia matrix $J=\textrm{diag}(5,2,1)$, and matrices $(\Omega,S)(\omega)$ defined in \cite{Leeman2023,LewBonalliEtAl2023}. We constrain $\omega(t)$ and $u(t)$ as
\begin{equation}\label{eq:spacecraft:constraints}
\|\omega(t)\|_\infty\leq 0.1,
\quad 
\|u(t)\|_\infty\leq 0.1,
\quad 
t\in[0,T].
\end{equation}
We consider feedback controls parameterized as $u(t)=\bar{u}(t)+K\omega(t)$, where $\bar{u}\in L^\infty([0,T],\R^3)$  is an open-loop control and 
$K=-\textrm{diag}(5,2,1)$ is a feedback gain. %
To enforce \eqref{eq:spacecraft:constraints}, we define the reachable set
\begin{equation}
\cR_t(\bar{u})=
\left\{\omega_w(\bar{u},t):\  \begin{aligned}
\omega_w(\bar{u})\ \text{solves}\ \eqref{eq:spacecraft:dynamics:omega}
\\[-1mm]
u(t)=\bar{u}(t)+K\omega(t)
\\[-1mm]
\hfill w\in L^\infty([0,T],\W)
\end{aligned}
\right\}
\end{equation}
for any $\bar{u}\in L^\infty([0,T],\R^3)$ and $t\in[0,T]$. 
The convex hulls of the reachable sets $\hull(\cR_t(\bar{u}))$ can be estimated using Algorithm \ref{alg:1}, where \PMPODE{d^0} is defined using \eqref{eq:spacecraft:dynamics:omega} with $u(t)=\bar{u}(t)+K\omega(t)$ and $\X_0=\{x^0\}$ is a singleton. %
Thus, given $M$ samples $d^i\in\cS^{n-1}$ and $\epsilon_t>0$ large-enough, the constraints  \eqref{eq:spacecraft:constraints} can be approximated by 
\begin{subequations}
\label{eq:spacecraft:constraints:approx}
\begin{gather}
-0.1+\epsilon_t\leq \omega^i(\bar{u},t)_j
\leq 0.1-\epsilon_t,
\\
-0.1+\epsilon_t\leq 
(\bar{u}(t)+ K\omega^i(\bar{u},t))_j
\leq 0.1-\epsilon_t,
\label{eq:spacecraft:constraints:approx:controls}
\end{gather}
\end{subequations}
for all $j=1,2,3$, $i=1,\dots,M$, $t\in[0,T]$. 
The conservatism of \eqref{eq:spacecraft:constraints:approx} follows from the convexity of  \eqref{eq:spacecraft:constraints} and Corollary \ref{cor:error_bound} or Theorem \ref{thm:error_bound}, see \cite[Corollary 5.5]{LewBonalliJansonPavone2023}. 
Given a reference $x_\text{r}=(1,0,\dots,0)$ and $(Q,R)=(10I_7,I_3)$, we define the robust control problem %
\customlabel{OCPMPC}{$\textbf{OCP}(x^0)$}:
\begin{equation*}
\begin{aligned}
\inf_{\bar{u}} \ 
  &\int_0^T((x_{\bar{u}}(t)-x_\text{r})^\top Q(x_{\bar{u}}(t)-x_\text{r}) + \bar{u}(t)^\top R \bar{u}(t))\dd t
\\
\textrm{s.t.} \  
& \,
  x_{\bar{u}} \text{ solves }\eqref{eq:spacecraft:dynamics} \text{ with }w=0,
\text{ and }
\bar{u}\text{ satisfies }\eqref{eq:spacecraft:constraints:approx}.
\end{aligned}
\end{equation*}
By recursively solving \OCPMPC{x^0} and applying the computed control inputs, we obtain the  receding horizon robust MPC controller in Algorithm \ref{alg:mpc}. We use $M=50$ samples of $d^0$ and the error bounds $\epsilon_t$ in Theorem \ref{thm:error_bound}. We solve \OCPMPC{x^0} using a standard direct method based on sequential convex programming (SCP). %
We refer to Appendix \ref{appendix:numerical:spacecraft} and the open-source code for further details.

\begin{figure}[!t]
 \centering
\begin{tcolorbox}[
title={\textit{Alg. \customlabel{alg:mpc}{3}(Robust model predictive control (MPC))}\phantom{${}^1$}},
colbacktitle=ForestGreen!10, 
coltitle=black,
boxrule=3pt,
colframe=ForestGreen!10, %
colback=ForestGreen!4,
enhanced, boxsep=0pt, left=6pt, right=6pt]
 \vspace{-4mm}
 \begin{algorithm}[H]
 \textbf{Input}: $M$ initial directions $d^0\subset\cS^{n-1}$, relaxation constants $\epsilon_t>0$, initial state $x(0)$
 \begin{algorithmic}
 \ForAll {$k=0,1,2\dots$}
 \State $\bar{u}^k\gets\textrm{Solve}(\textbf{OCP}(x(k\Delta t))$
 \State Apply $u^k(t)=\bar{u}^k(t)+K\omega(t)$ for $t\in[0,\Delta t]$
\EndFor
 \end{algorithmic}
 \end{algorithm}
 \vspace{-8mm}
 \end{tcolorbox}
\vspace{-6mm}
 \end{figure}

\subsubsection{MPC results}
We evaluate the controller in $100$ experiments with uniformly-sampled disturbances and initial states. Results in Figure \ref{fig:spacecraft:mpc} show that despite disturbances, the system converges to the reference and the constraints \eqref{eq:spacecraft:constraints} are always satisfied. %
The optimization problem is always feasible in these experiments. %
 We observe that the error bounds from Theorem \ref{thm:error_bound} introduce reasonable conservatism. By increasing the sample size $M$, this conservatism can be made arbitrarily small.

\begin{figure}[!t]
\centering
\begin{minipage}{0.475\linewidth}
\includegraphics[width=1\linewidth]{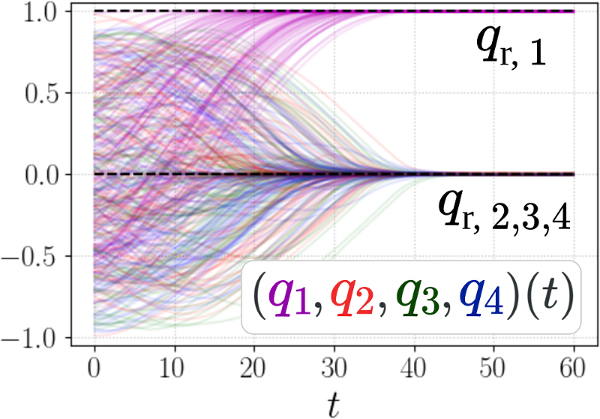}
    \centering  
\caption{$100$ closed-loop trajectories using robust MPC (Algorithm \ref{alg:mpc}) to stabilize the attitude of the spacecraft from different initial conditions under external disturbances.}
\label{fig:spacecraft:mpc}
\end{minipage}
\begin{minipage}{0.5\linewidth}
\includegraphics[width=1\linewidth]{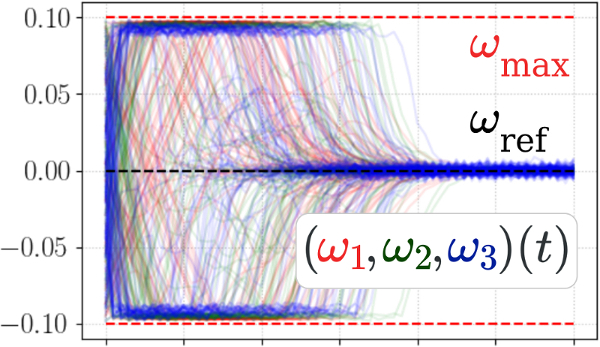}
\\[1mm]
\includegraphics[width=1\linewidth]{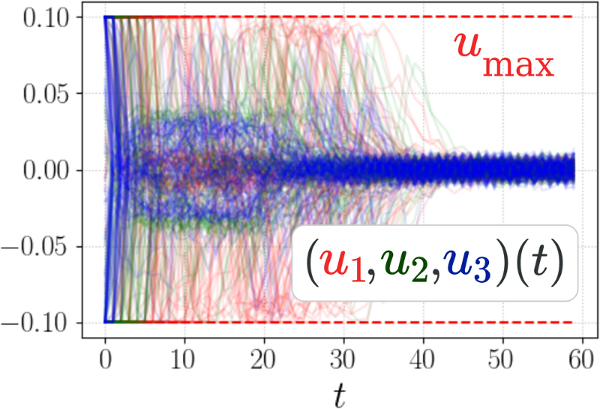}
\end{minipage}
\vspace{-6mm}
\end{figure}

As is common in MPC, in Algorithm \ref{alg:mpc}, we warm-start the optimization using the previously computed solution and only perform a single SCP iteration per timestep, yielding a replanning rate with MPC of approximately $20$\textrm{Hz} with our Python implementation. 
We report solver statistics and computation times from a zero-initial guess in the appendix in Figures \ref{fig:spacecraft:comp_times} and \ref{fig:spacecraft:comp_times_Ms}. We observe that a few SCP iterations suffice to reach accurate solutions. 
 Computation time roughly scales linearly with the sample size (most computation time is spent  evaluating \eqref{eq:spacecraft:constraints:approx}) and could be reduced via parallelization on a GPU.

\subsubsection{Comparisons with other reachability methods}
We compare the reachable set convex hull estimates from Algorithm \ref{alg:1} with those from two other standard methods. The first baseline is a sampling-based method (\texttt{RandUP}  \cite{LewPavone2020}) that estimates the convex hulls $\hull(\X_t)$ with the convex hulls of trajectories from \eqref{eq:dynamics:discrete-time} with samples of $w(k\Delta t)$. The second standard baseline propagates uncertainty from the disturbances using a linear model of \eqref{eq:dynamics:discrete-time} and bounds the approximation error using the Lipschitz constant of the Jacobian $\nabla_x\bar{f}(x,u)$. %

Given a control trajectory $\bar{u}$ solving \OCPMPC{x^0}, we present reachable set estimates  in Figure \ref{fig:spacecraft:reach_comparison}. %
First, the Lipschitz-based and the naive sampling-based baselines are the fastest (with runtimes at $35\mu\textit{s}$ and $150\mu\textit{s}$, respectively), followed by Algorithm \ref{alg:1} ($350\mu\textit{s}$). 
However, the over-approximations of the reachable sets from the Lipschitz-based method are significantly more conservative than those from Algorithm \ref{alg:1}. A controller using the reachable set estimates from this baseline would deem $\bar{u}$ to potentially violate constraints and would thus be more conservative than the proposed robust MPC approach. 
Also, the naive sampling-based baseline significantly under-estimates the true convex hulls. %
One can show that this baseline performs worse as the discretization is refined, see also Section \ref{sec:numerical:nn}. In contrast, Algorithm \ref{alg:1} is derived in continuous time so its sample complexity is independent of the discretization of the dynamics. 
Since Algorithm \ref{alg:1} only samples on the $(n-1)$-dimensional sphere $\cS^{n-1}$, it is more efficient and its precision only depends on the accuracy of the discretization of \PMPODE{d^0}.

\begin{figure}[!t]
\centering
\includegraphics[width=1\linewidth,trim=10 10 0 5, clip]{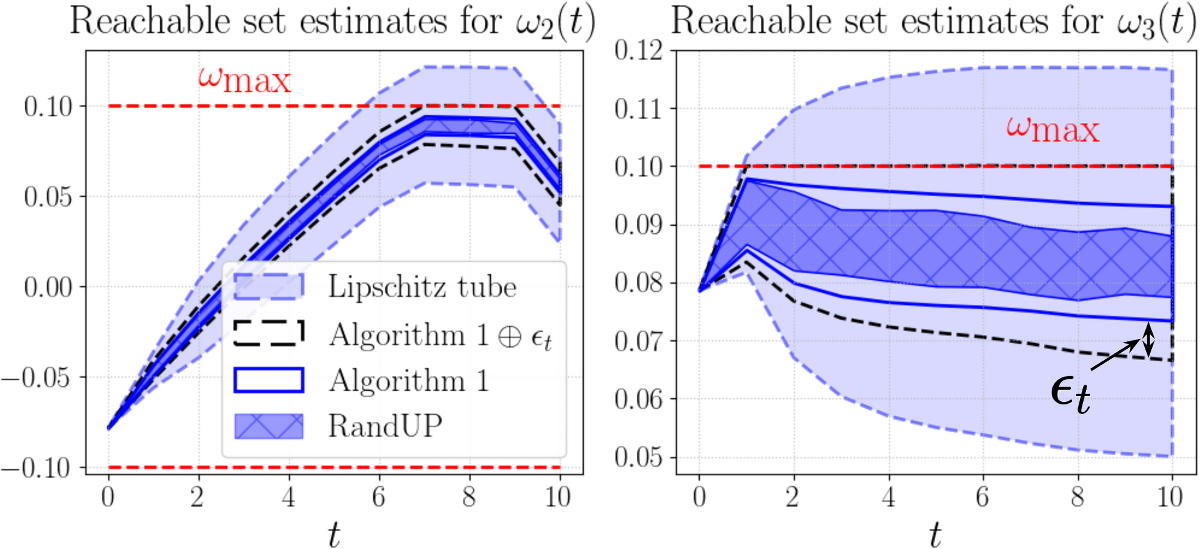}
 \caption{Reachability comparison: convex hull reachable sets estimates computed with Algorithm \ref{alg:1} and with two baselines.}
\label{fig:spacecraft:reach_comparison}
\vspace{-4mm}
\end{figure}

\section{Conclusion}\label{sec:conclusion}
We showed that estimating the convex hulls of reachable sets of nonlinear systems with disturbances and uncertain initial conditions is equivalent to studying the solutions of an ODE with initial conditions on the sphere. 
This result is a significantly simpler finite-dimensional characterization of the convex hulls of reachable sets \rev{that could inform the design of efficient reachability analysis algorithms for nonlinear systems.}

\rev{Algorithm \ref{alg:1} 
has two main limitations. 
First, the accuracy of sampling-based techniques decreases as the number of uncertain variables increases. Thanks to our characterization result, the sample space is only of dimension 
$(n-1)$ as opposed to an infinite-dimensional space of disturbances. 
However, obtaining provably-accurate approximations for high-dimensional systems in reasonable computation time remains difficult. 
This limitation is unfortunately shared by other reachability analysis algorithms for nonlinear systems. 
It would be interesting to develop methods to bias sampling to get accurate approximations with fewer samples, e.g., using  adversarial sampling \cite{LewPavone2020}, or use additional properties of the dynamics to design a method that out-performs sampling-based-only algorithms. 
Second, convex hull approximations of non-convex reachable sets may be conservative for some systems. Given additional computation time, this limitation could be addressed by splitting the sample space $\cS^{n-1}$ into distinct regions, running Algorithm \ref{alg:1} on each region, and approximating the reachable sets with the non-convex union of the outputs.}

\ifarxiv
    
    \section*{References}
    \bibliographystyle{IEEEtran}
    \bibliography{ASL_papers,main}

\else

    \section*{References}
    \vspace{-5mm}
    \bibliographystyle{IEEEtran}
    \bibliography{ASL_papers,main}

@Preamble{"\newcommand{\noopsort}[1]{} " #
"\newcommand{\printfirst}[2]{#1} " #
"\newcommand{\singleletter}[1]{#1} " #
"\newcommand{\switchargs}[2]{#2#1} "}

@String{jrn_AR_CRAS                    = {{Annual Review of Control, Robotics, and Autonomous Systems}}}

@String{jrn_EDPS_ESAIMCOCV             = {{ESAIM: Control, Optimisation \& Calculus of Variations}}}

@String{jrn_IEEE_A                     = {{IEEE Access}}}

@String{jrn_IEEE_TAC                   = {{IEEE Transactions on Automatic Control}}}

@String{jrn_IFAC_A                     = {{Automatica}}}

@String{jrn_SIAM_JCO                   = {{SIAM Journal on Control and Optimization}}}

@String{jrn_Spr_DCG                    = {{Discrete \& Computational Geometry}}}

@String{jrn_Spr_JOTA                   = {{Journal of Optimization Theory \& Applications}}}

@String{proc_ACM_HSCC                  = {{Hybrid Systems: Computation and Control}}}

@String{proc_CoRL                      = {{Conf.\ on Robot Learning}}}

@String{proc_EUCA_ECC                  = {{European Control Conference}}}

@String{proc_IEEE_CDC                  = {{Proc.\ IEEE Conf.\ on Decision and Control}}}

@String{proc_IEEE_ICRA                 = {{Proc.\ IEEE Conf.\ on Robotics and Automation}}}

@String{proc_L4DC                      = {{Learning for Dynamics \& Control Conference}}}

@String{proc_Spr_CAV                   = {{Proc.\ Int.\ Conf.\ Computer Aided Verification}}}

@String{pub_CUP                        = {{Cambridge Univ.\ Press}}}

@String{pub_Springer                   = {{Springer}}}

@String{pub_Springer_BH                = {{Springer Berlin Heidelberg}}}

@String{pub_Springer_NY                = {{Springer New York}}}

@String{pub_TF                         = {{Taylor \& Francis}}}

@inproceedings{SinghMajumdarEtAl2017,
  author       = {Singh, S. and Majumdar, A. and Slotine, J.-J. E. and Pavone, M.},
  title        = {Robust Online Motion Planning via Contraction Theory and Convex Optimization},
  booktitle    = proc_IEEE_ICRA,
  year         = {2017},
  asl_abstract = {We present a framework for online generation of robust motion plans for robotic systems with nonlinear dynamics subject to bounded disturbances, control constraints, and online state constraints such as obstacles. In an offline phase, one computes the structure of a feedback controller that can be efficiently implemented online to track any feasible nominal trajectory. The offline phase leverages contraction theory and convex optimization to characterize a fixed-size ``tube'' that the state is guaranteed to remain within while tracking a nominal trajectory (representing the center of the tube). In the online phase, when the robot is faced with obstacles, a motion planner uses such a tube as a robustness margin for collision checking, yielding nominal trajectories that can be safely executed (i.e., tracked without collisions under disturbances). In contrast to recent work on robust online planning using funnel libraries, our approach is not restricted to a fixed library of maneuvers computed offline and is thus particularly well-suited to applications such as UAV flight in densely cluttered environments where complex maneuvers may be required to reach a goal. We demonstrate our approach through simulations of a 6-state planar quadrotor navigating cluttered environments in the presence of a cross-wind. We also discuss applications of our approach to Tube Model Predictive Control (TMPC) and compare the merits of our method with state-of-the-art nonlinear TMPC techniques.},
  asl_address  = {Singapore},
  asl_month    = may,
  asl_url      = {/wp-content/papercite-data/pdf/Singh.Majumdar.Slotine.Pavone.ICRA17.pdf},
  owner        = {bylard},
  timestamp    = {2018-06-30}
}

@inproceedings{LewPavone2020,
  title        = {Sampling-based Reachability Analysis: A Random Set Theory Approach with Adversarial Sampling},
  author       = {Lew, T. and Pavone, M.},
  booktitle    = proc_CoRL,
  year         = {2020},
  asl_abstract = {Reachability analysis is at the core of many applications, from neural network verification, to safe trajectory planning of uncertain systems. However, this problem is notoriously challenging, and current approaches tend to be either too restrictive, too slow, too conservative, or approximate and therefore lack guarantees. In this paper, we propose a simple yet effective sampling-based approach to perform reachability analysis for arbitrary dynamical systems. Our key novel idea consists of using random set theory to give a rigorous interpretation of our method, and prove that it returns sets which are guaranteed to converge to the convex hull of the true reachable sets. Additionally, we leverage recent work on robust deep learning and propose a new adversarial sampling approach to robustify our algorithm and accelerate its convergence. We demonstrate that our method is faster and less conservative than prior work, present results for approximate reachability analysis of neural networks and robust trajectory optimization of high-dimensional uncertain nonlinear systems, and discuss future applications.},
  asl_month    = aug,
  asl_url      = {https://arxiv.org/abs/2008.10180},
  owner        = {lew},
  timestamp    = {2020-11-07}
}

@inproceedings{LewJansonEtAl2022,
  author       = {Lew, T. and Janson, L. and Bonalli, R. and Pavone, M.},
  title        = {A Simple and Efficient Sampling-based Algorithm for General Reachability Analysis},
  year         = {2022},
  booktitle    = proc_L4DC,
  asl_abstract = {In this work, we analyze an efficient sampling-based algorithm for general-purpose reachability analysis, which remains a notoriously challenging problem with applications ranging from neural network verification to safety analysis of dynamical systems. By sampling inputs, evaluating their images in the true reachable set, and taking their $\epsilon$-padded convex hull as a set estimator, this algorithm applies to general problem settings and is simple to implement. Our main contribution is the derivation of asymptotic and finite-sample accuracy guarantees using random set theory. This analysis informs algorithmic design to obtain an $\epsilon$-close reachable set approximation with high probability, provides insights into which reachability problems are most challenging, and motivates safety-critical applications of the technique. On a neural network verification task, we show that this approach is more accurate and significantly faster than prior work. Informed by our analysis, we also design a robust model predictive controller that we demonstrate in hardware experiments.},
  asl_month    = mar,
  asl_url      = {https://arxiv.org/abs/2112.05745},
  keywords     = {pub},
  owner        = {lew},
  timestamp    = {2022-03-01}
}

@inproceedings{LewBonalliEtAl2020,
  author       = {Lew, T. and Bonalli, R. and Pavone, M.},
  title        = {Chance-Constrained Sequential Convex Programming for Robust Trajectory Optimization},
  booktitle    = proc_EUCA_ECC,
  year         = {2020},
  asl_abstract = {Planning safe trajectories for nonlinear dynamical systems subject to model uncertainty and disturbances is challenging. In this work, we present a novel approach to tackle chance-constrained trajectory planning problems with nonconvex constraints, whereby obstacle avoidance chance constraints are reformulated using the signed distance function. We propose a novel sequential convex programming algorithm and prove that under a discrete time problem formulation, it is guaranteed to converge to a solution satisfying first-order optimality conditions. We demonstrate the approach on an uncertain 6 degrees of freedom spacecraft system and show that the solutions satisfy a given set of chance constraints.},
  asl_address  = {St.\ Petersburg, Russia},
  asl_month    = may,
  asl_url      = {/wp-content/papercite-data/pdf/Lew.Bonalli.Pavone.ECC20.pdf},
  owner        = {lew},
  timestamp    = {2020-03-16}
}

@article{BonalliLewESAIM2022,
  author    = {Bonalli, R. and Lew, T. and Pavone, M.},
  title     = {Sequential Convex Programming For Non-Linear Stochastic Optimal Control},
  journal   = jrn_EDPS_ESAIMCOCV,
  volume    = {28},
  year      = {2022},
  owner     = {lew},
  timestamp = {2022-01-29},
  asl_url   = {https://arxiv.org/abs/2009.05182}
}

@article{BonalliLewEtAl2022,
  author    = {Bonalli, R. and Lew, T. and Pavone, M.},
  title     = {Analysis of Theoretical and Numerical Properties of Sequential Convex Programming for Continuous-Time Optimal Control},
  abstract  = {Sequential Convex Programming (SCP) has recently gained significant popularity as an effective method for solving optimal control problems and has been successfully applied in several different domains. However, the theoretical analysis of SCP has received comparatively limited attention, and it is often restricted to discrete-time formulations. In this paper, we present a unifying theoretical analysis of a fairly general class of SCP procedures for continuous-time optimal control problems. In addition to the derivation of convergence guarantees in a continuous-time setting, our analysis reveals two new numerical and practical insights. First, we show how one can more easily account for manifold-type constraints, which are a defining feature of optimal control of mechanical systems. Second, we show how our theoretical analysis can be leveraged to accelerate SCP-based optimal control methods by infusing techniques from indirect optimal control.},
  journal   = jrn_IEEE_TAC,
volume = {68},
number = {8},
pages = {4570--4585},
  year      = {2022},
  keywords  = {pub},
  owner     = {lew},
  timestamp = {2020-12-07},
  asl_url   = {https://arxiv.org/abs/2009.05038}
}

@inproceedings{LewBonalliEtAl2023,
  author       = {Lew, T. and Bonalli, R. and Pavone, M.},
  title        = {Exact Characterization of the Convex Hulls of Reachable Sets},
  booktitle    = proc_IEEE_CDC,
  year         = {2023},
  asl_abstract = {We study the convex hulls of reachable sets of nonlinear systems with bounded disturbances. Reachable sets play a critical role in control, but remain notoriously challenging to compute, and existing over-approximation tools tend to be conservative or computationally expensive. In this work, we exactly characterize the convex hulls of reachable sets as the convex hulls of solutions of an ordinary differential equation from all possible initial values of the disturbances. This finite-dimensional characterization unlocks a tight estimation algorithm to over-approximate reachable sets that is significantly faster and more accurate than existing methods. We present applications to neural feedback loop analysis and robust model predictive control.},
  keywords     = {pub},
  owner        = {lew},
  timestamp    = {2023-04-03},
  asl_url      = {https://arxiv.org/abs/2303.17674}
}

@String{jrn_AR_CRAS               = {{Annual Review of Control, Robotics, and Autonomous Systems}}}

@String { jrn_EDPS_ESAIMCOCV        = {{ESAIM: Control, Optimisation \& Calculus of Variations}} }

@String { jrn_IEEE_TAC              = {{IEEE Transactions on Automatic Control}} }

@String { jrn_IFAC_A                = {{Automatica}} }

@String { jrn_SIAM_JCO              = {{SIAM Journal on Control and Optimization}} }

@String { jrn_Spr_DCG               = {{Discrete \& Computational Geometry}} }

@String { jrn_Spr_JOTA              = {{Journal of Optimization Theory \& Applications}} }

@String { proc_ACM_HSCC             = {{Hybrid Systems: Computation and Control}} }

@String { proc_EUCA_ECC             = {{European Control Conference}} }

@String { proc_IEEE_CDC             = {{Proc.\ IEEE Conf.\ on Decision and Control}} }

@String { proc_IEEE_ICRA            = {{Proc.\ IEEE Conf.\ on Robotics and Automation}} }

@String { pub_CUP                   = {{Cambridge Univ.\ Press}} }

@String { pub_Springer              = {{Springer}} }

@String { pub_Springer_BH           = {{Springer Berlin Heidelberg}} }

@String { pub_Springer_NY           = {{Springer New York}} }

@String { pub_TF                    = {{Taylor \& Francis}} }

@Book{Schneider2014,
  Title                    = {Convex Bodies: The {Brunn}-{Minkowski} Theory},
  Author                   = {Schneider, R.},
  Publisher                = pub_CUP,
  Edition   			   = {Second},
  Year                     = {2014},
  Owner                    = {lew},
  Timestamp                = {2021-06-17}
}

@book{Lee2012,
  title 				   = {Introduction to Smooth Manifolds},
  author 				   = {Lee, J.~M.},
  year 					   = {2012},
  publisher 			   = pub_Springer_NY,
  edition   = {Second},
  Owner                    = {lew},
  Timestamp                = {2021-06-23}
}

@book{Lee2018,
  title 				   = {Introduction to {Riemannian} Manifolds},
  author 				   = {Lee, J.~M.},
  year 					   = {2018},
  publisher 			   = pub_Springer,
  edition   = {Second},
  Owner                    = {lew},
  Timestamp                = {2022-10-12}
}

@article{Everett21_journal,
    Author = {Everett, M. and Habibi, G. and Chuangchuang, S. and How, J.~P.},
    Title = {Reachability Analysis of Neural Feedback Loops},
    Year = {2021},
    journal = jrn_IEEE_A,
    volume={9},
    pages={163938--163953},
    owner     = {lew},
	Timestamp= {2021-09-07}
}

@Article{Althoff2021,
    title = {Set Propagation Techniques for Reachability Analysis},
    author = {Althoff, M. and Frehse, G. and Girard, A.},
    pages = {369--395},
    volume = {4},
    number={1},
    year = {2021},
    journal = jrn_AR_CRAS,
    owner     = {lew},
    timestamp = {2021-10-06}
}

@InProceedings{Kurzhanski2000,
    title={Ellipsoidal Techniques for Reachability Analysis},
    author={Kurzhanski, A.~B. and Varaiya, P.},
    booktitle=proc_ACM_HSCC,
    year={2000},
    owner     = {lew},
    timestamp = {2021-10-06}
}

@InProceedings{ThorpeL4DC2021,
	title = {Learning Approximate Forward Reachable Sets
	Using Separating Kernels},
	author = {Thorpe, A.~J. and Ortiz, K.~R. and Oishi, M.~M.~K.},
	year = {2021},
    booktitle = {Learning for Dynamics \& Control Conference},
    owner     = {lew},
    timestamp = {2021-10-06}
}

@InProceedings{Chen2013,
	author={Chen, X. and Abraham, E. and Sankaranarayanan, S.},
	title={Flow*: An Analyzer for Non-linear Hybrid Systems},
	booktitle=proc_Spr_CAV,
	year={2013},
	owner     = {lew},
	timestamp = {2021-10-10}
}

@article{Rauch1974,
  year = {1974},
  volume = {9},
  number = {4},
  author = {Rauch, J.},
  title = {An inclusion theorem for ovaloids with comparable second fundamental forms},
  journal = {Journal of Differential Geometry}
}

@article{Gonzalez2009,
  year = {2009},
  publisher = {Springer Science and Business Media {LLC}},
  volume = {42},
  number = {1},
  pages = {49--64},
  author = {Gonz{\'{a}}lez, A.},
  title = {Measurement of Areas on a Sphere Using Fibonacci and Latitude-Longitude Lattices},
  journal = {Mathematical Geosciences}
}

@Book{Grothendieck1973,
  author    = {Grothendieck, A.},
  title     = {Topological Vector Spaces},
  publisher = {New York: Gordon and Breach Science Publishers},
  note = {Translated by Chaljub, Orlando},
  edition   = {First},
  year      = {1973},
}

@UNPUBLISHED{Trelat2023,
    title={Control in finite and infinite dimension},
    author={Tr{\'{e}}lat, E.},
    year={2023},
    note= {Available at \url{https://hal.science/hal-04361042}},
}

@article{Krener1989,
  year = {1989},
  volume = {27},
  number = {1},
  pages = {120--147},
  author = {Krener, A.~J. and Sch\"{a}ttler, H.},
  title = {The Structure of Small-Time Reachable Sets in Low Dimensions},
  journal = jrn_SIAM_JCO
}

@article{LewBonalliJansonPavone2023,
    title={Estimating the convex hull of the image of a set with smooth boundary: error bounds and applications},
    author={Lew, T. and Bonalli, R. and Janson, L. and Pavone, M.},
    year={2024},
  journal   = jrn_Spr_DCG,
}

@article{Trelat2012,
  year = {2012},
  volume = {154},
  number = {3},
  pages = {713--758},
  author = {Tr{\'{e}}lat, E.},
  title = {Optimal Control and Applications to Aerospace: Some Results and Challenges},
  journal = jrn_Spr_JOTA
}

@Book{Pontryagin1987,
  author    = {Pontryagin, L. S.},
  title     = {Mathematical Theory of Optimal Processes},
  publisher = pub_TF,
  year      = {1987},
  owner     = {bylard},
  timestamp = {2018-09-05},
}

@UNPUBLISHED{Leeman2023,
    title={Robust Nonlinear Optimal Control via System Level Synthesis},
    author={Leeman, A.~P. and K{\"{o}}ller, J. and Zanelli, A. and Bennani, S. and Zeilinger, M.~N.},
    year={2024},
    note      = {Available at \url{https://arxiv.org/abs/2301.04943}},
}

@article{Yu2013,
  year = {2013},
  volume = {62},
  number = {2},
  pages = {194--200},
  author = {Yu, S. and Maier, C. and Chen, H. and Allg\"{o}wer, F.},
  title = {Tube {MPC} scheme based on robust control invariant set with application to Lipschitz nonlinear systems},
  journal = {Systems \& Control Letters}
}

@InProceedings{koller2018,
    title = {Learning-based Model Predictive Control for Safe Exploration},
    author = {Koller, T. and Berkenkamp, F. and Turchetta, M. and Krause, A.},
    booktitle=proc_IEEE_CDC,
    year = {2018},
    owner     = {lewt},
    timestamp = {2020-01-22}
}

@inproceedings{meyer2019hscc,
  title={{TIRA}: Toolbox for Interval Reachability Analysis},
  author={Meyer, P.-J. and Devonport, A. and Arcak, M.},
  booktitle=proc_ACM_HSCC,
  pages={224-229},
  year={2019}
}

@article{Abate2022,
  year = {2022},
  volume = {67},
  number = {9},
  pages = {4947--4954},
  author = {Abate, M. and Coogan, S.},
  title = {Robustly Forward Invariant Sets for Mixed-Monotone Systems},
  journal = {{IEEE} Transactions on Automatic Control}
}

@inproceedings{Coogan2015,
  year = {2015},
  author = {Coogan, S. and Arcak, M.},
  title = {Efficient finite abstraction of mixed monotone systems},
  booktitle = proc_ACM_HSCC,
}

@book{Schttler2012,
  year = {2012},
  publisher = {Springer New York},
  author = {Sch\"{a}ttler, H. and Ledzewicz, U.},
  title = {Geometric Optimal Control}
}

@book{BonnardChyba2003,
  year = {2003},
  publisher = pub_Springer_BH,
  author = {Bonnard, B. and Chyba, M.},
  title = {Singular Trajectories and their Role in Control Theory}
}

@article{Pecsvaradi1971,
  year = {1971},
  volume = {19},
  number = {4},
  pages = {319--344},
  author = {Pecsvaradi, T. and Narendra, K.~S.},
  title = {Reachable sets for linear dynamical systems},
  journal = {Information and Control}
}

@article{Gornov2015,
  year = {2015},
  volume = {66},
  number = {1},
  pages = {53--64},
  author = {Gornov, A.~Y. and Zarodnyuk, T.~S. and Finkelstein, E.~A. and Anikin, A.~S.},
  title = {The method of uniform monotonous approximation of the reachable set border for a controllable system},
  journal = {Journal of Global Optimization}
}

@InProceedings{Althoff2008,
    title={Reachability analysis of nonlinear systems with uncertain parameters using conservative linearization},
    author={Althoff, M. and Stursberg, O. and Buss, M.},
    year={2008},
    booktitle = {{Proc.\ IEEE Conf.\ on Decision \& Control}},
}

@Book{Meyer2021,
  author    = {Meyer, P.-J. and Devonport, A. and Arcak, M.},
  title     = {Interval Reachability Analysis},
  publisher = {Springer Cham},
  year      = {2021},
}

@book{Agrachev2004,
  year = {2004},
  publisher = {Springer Berlin Heidelberg},
  author = {Agrachev, A.~A. and Sachkov, Y.~L.},
  title = {Control Theory from the Geometric Viewpoint}
}

@article{Stellato2020,
  year = {2020},
  volume = {12},
  number = {4},
  pages = {637--672},
  author = {Stellato, B. and Banjac, G. and Goulart, P. and Bemporad, A. and Boyd, S.},
  title = {{OSQP}: an operator splitting solver for quadratic programs},
  journal = {Mathematical Programming Computation}
}

@software{jax2018github,
  author = {Bradbury, J. and Frostig, R. and Hawkins, P. and Johnson, M.~J. and Leary, C. and Maclaurin, D. and Necula, G. and Paszke, A. and Vander{P}las, J. and Wanderman-{M}ilne, S. and Zhang, Q.},
  title = {{JAX}: composable transformations of {P}ython+{N}um{P}y programs},
  version = {0.3.13},
  year = {2018},
}

@article{Cannarsa2006,
  year = {2006},
  volume = {12},
  number = {2},
  author = {Cannarsa, P. and Frankowska, H.},
  title = {Interior sphere property of attainable sets and time optimal control problems},
  journal = {{ESAIM}: Control,  Optimisation and Calculus of Variations},
  pages = {350--370},
}

@article{Lorenz2005,
  year = {2005},
  volume = {54},
  number = {9},
  author = {Lorenz, T.},
  title = {Boundary regularity of reachable sets of control systems},
  journal = {System \& Control Letters},
  pages = {919--924},
}

@article{Scott2013,
  year = {2013},
  volume = {49},
  number = {1},
  pages = {93--100},
  author = {Scott, J.~K. and Barton, P.~I.},
  title = {Bounds on the reachable sets of nonlinear control systems},
  journal = jrn_IFAC_A
}

@article{Berz1998,
  title = {Verified Integration of {ODEs} and Flows Using Differential Algebraic Methods on High-Order {Taylor} Models},
  year = {1998},
  publisher = {Springer Science and Business Media {LLC}},
  volume = {4},
  number = {4},
  pages = {361--369},
  author = {Berz, M. and Makino, K.},
  journal = {Reliable Computing}
}

@article{Ramdani2009,
  year = {2009},
  volume = {54},
  number = {10},
  pages = {2352--2364},
  author = {Ramdani, N. and Meslem, N. and Candau, Y.},
  title = {A Hybrid Bounding Method for Computing an Over-Approximation for the Reachable Set of Uncertain Nonlinear Systems},
  journal = jrn_IEEE_TAC
}

@article{Kousik2023,
  year = {2023},
  author = {Kousik, S. and Dai, A. and Gao, G.~X.},
  title = {Ellipsotopes: Uniting Ellipsoids and Zonotopes for Reachability Analysis and Fault Detection},
  pages = {3440–3452},
  number = {6},
  volume = {68},
  journal = jrn_IEEE_TAC
}

@article{Reiig2007,
  title = {Convexity of reachable sets of nonlinear ordinary differential equations},
  volume = {68},
  ISSN = {1608-3032},
  number = {9},
  journal = {Automation and Remote Control},
  author = {Reißig,  G.},
  year = {2007},
  pages = {1527–1543}
}

@article{Camilli1999,
  year = {1999},
  volume = {18},
  number = {1},
  pages = {3--12},
  author = {F. Camilli},
  title = {A Note on Convergence of Level Sets},
  journal = {Zeitschrift f\"{u}r Analysis und ihre Anwendungen}
}

@article{Silva2010,
  title = {Smooth Regularization of Bang-Bang Optimal Control Problems},
  volume = {55},
  ISSN = {1558-2523},
  number = {11},
  journal = jrn_IEEE_TAC,
  author = {Silva, C. and Tr{\'{e}}lat, E.},
  year = {2010},
  pages = {2488–2499}
}

@inproceedings{Baier2009,
  title = {A computational method for non-convex reachable sets using optimal control},
  booktitle = proc_EUCA_ECC,
  author = {Baier, R. and Gerdts, M.},
  year = {2009},
}

@book{Kurzhanski2014,
  title = {Dynamics and Control of Trajectory Tubes: Theory and Computation},
  publisher = {Springer International Publishing},
  author = {Kurzhanski, A.~B. and Varaiya, P.},
  year = {2014}
}

@article{Maidens2015,
  title = {Reachability Analysis of Nonlinear Systems Using Matrix Measures},
  volume = {60},
  number = {1},
  journal = jrn_IEEE_TAC,
  author = {Maidens, J. and Arcak, M.},
  year = {2015},
  pages = {265–270}
}

@article{Fan2017,
  title = {Simulation-Driven Reachability Using Matrix Measures},
  volume = {17},
  ISSN = {1558-3465},
  number = {1},
  journal = {ACM Transactions on Embedded Computing Systems},
  author = {Fan, C. and Kapinski, J. and Jin, X. and Mitra, S.},
  year = {2017},
  month = dec,
  pages = {1--28}
}

@inproceedings{Huang2012,
  title = {Computing bounded reach sets from sampled simulation traces},
  author = {Huang, Z. and Mitra, S.},
  year = {2012},
booktitle = proc_ACM_HSCC,
}

@inproceedings{Donz2007,
title = {Systematic Simulation Using Sensitivity Analysis},
author = {Donz{\'{e}}, A. and Maler, O.},
year = {2007},
booktitle = proc_ACM_HSCC,
}

@book{wainwright_2019, 
	place={Cambridge}, 
	title={High-Dimensional Statistics: A Non-Asymptotic Viewpoint}, 
	publisher={Cambridge University Press}, 
	author={Wainwright, M.~J.}, 
	year={2019}, 
}

@inproceedings{ManzanasLopez2023,
  title = {{ARCH-COMP23} Category Report: Artificial Intelligence and Neural Network Control Systems for Continuous and Hybrid Systems Plants},
  booktitle = {Workshop on Applied Verification of Continuous and Hybrid Systems},
  author = {Manzanas Lopez, D. and Althoff,  M. and Forets, M. and Johnson, T.~T. and Ladner, T. and Schilling, C.},
	year={2023}, 
}
    \vspace{-10mm}

\fi

\ifarxiv

\else
    \begin{IEEEbiography}[{\includegraphics[width=1in,height=1.25in,clip,keepaspectratio]{figs/bios/lew.jpg}}]{Thomas Lew} 
    is a Research Scientist at the Toyota Research Institute. He completed his PhD degree in Aeronautics and Astronautics at Stanford University in 2023, his MSc degree at ETH Zurich in 2019, his BSc degree at EPFL in 2017, and research internships at Google Brain Robotics and at the NASA Jet Propulsion Laboratory. His research focuses on the design of decision-making algorithms for autonomous systems using techniques in optimization, control theory, geometry, and machine learning. He is a recipient of the Outstanding Student Paper Award at the 2023 IEEE Conference on Decision and Control and of the 2023 IEEE Control Systems Magazine Outstanding Paper Award.
    \vspace{-12mm}
    \end{IEEEbiography}
    
    \begin{IEEEbiography}[{\includegraphics[width=1in,height=1.25in,clip,keepaspectratio]{figs/bios/bonalli.jpg}}]{Riccardo Bonalli} 
    obtained his MSc in Mathematical Engineering from Politecnico di Milano, Italy in 2014 and his PhD in applied mathematics from Sorbonne Universite, France in 2018 in collaboration with ONERA--The French Aerospace Lab, France. He was a postdoctoral researcher with the Department of Aeronautics and Astronautics, Stanford University. Currently, Riccardo is a tenured CNRS researcher with the Laboratory of Signals and Systems (L2S), Universit\'e Paris-Saclay, Centre National de la Recherche Scientifique (CNRS), CentraleSup\'elec, France. His research interests concern theoretical and numerical robust optimal control with techniques from differential geometry, statistical analysis, and machine learning, and applications in aerospace systems and robotics. He is a recipient of the ONERA Best PhD Student Award 2018 in Systems and Control, the Control Systems Magazine Outstanding Paper Award 2023, and the IEEE CDC 2023 Outstanding Student Paper Award (as advisor).
    \vspace{-12mm}
    \end{IEEEbiography}
    
    \begin{IEEEbiography}[{\includegraphics[width=1in,height=1.25in,clip,keepaspectratio]{figs/bios/pavone.jpg}}]{Marco Pavone} 
    is   an   Associate Professor of Aeronautics and Astronautics at Stanford University,  where he is the Director of the Autonomous Systems Laboratory. Before  joining  Stanford, he  was  a  Research  Technologist  within the  Robotics  Section  at  the  NASA  Jet Propulsion  Laboratory.   He  received  a Ph.D. degree in Aeronautics and Astronautics from the Massachusetts Institute of  Technology  in  2010.   His  main  research  interests  are  in the  development  of  methodologies  for  the  analysis,  design, and  control  of  autonomous  systems,  with  an  emphasis  on self-driving cars, autonomous aerospace vehicles, and future mobility systems.  He is a recipient of a number of awards, including  a  Presidential  Early  Career  Award  for  Scientists and Engineers, an ONR YIP Award, an NSF CAREER Award, and a NASA Early Career Faculty Award.  He was identified by the American Society for Engineering Education (ASEE) as one of America’s 20 most highly promising investigators under the age of 40.  
    \end{IEEEbiography}

\vfill


\fi

\appendix

\subsection{The linear case with ellipsoidal uncertainties}\label{apdx:linear_case}
\rev{To gain further intuition, we describe how the results specialize to the problem setting with
linear dynamics
$$\dot{x}(t)=A(t)x(t)+B(t)w(t)$$ and ellipsoidal uncertainty sets 
 $\mathcal{X}_0=\mathcal{E}(\mu,Q)=\{x\in\R^n:(x-\mu)^\top Q^{-1}(x-\mu)\leq 1\}$ (see Example \ref{example:gauss_maps}) and $\W=\mathcal{E}(\bar{w},W)$. In this setting, \PMPODE{d^0} simplifies to
\begin{align*}
\dot{x}(t)&=A(t)x(t)+B(t)w(t),
\qquad\qquad\quad  t\in[0,T],
\nonumber
\\
\dot{p}(t)&=
-A(t)^\top p(t),
\\
w(t)&=
(n^{\partial\W})^{-1}\left(\frac{A(t){}^\top p(t)}{\|A(t){}^\top p(t)\|}\right)
\\
&= \bar{w}+WA(t){}^\top p(t)/\sqrt{p(t){}^\top A(t) W A(t){}^\top p(t)}
\\
x(0)&=
(n^{\partial\X_0})^{-1}(d^0)
\\
&=\mu+Qd^0/\sqrt{d^0{}^\top Q d^0},
\\
p(0)&=d^0.
\end{align*}
We can also draw a direct connection with the work of 
Kurzhanski and Varaiya in 
\cite{Kurzhanski2000}.}
\begin{lemma}[3, Equation 30] 
\rev{Consider  the linear system 
 $\dot{x}(t)=A(t)x(t)+B(t)w(t)$ with  $x(0)\in\mathcal{X}_0$ and $w(t)\in\mathcal{W}$, where $B(t)$ is always invertible, 
 $\mathcal{X}_0=\mathcal{E}(\mu,Q)$ 
 is a nondegenerate ellipsoid, and $\partial\mathcal{W}$ is an ovaloid. Given a direction $d\in\mathcal{S}^{n-1}$, the initial state $x(0)\in\mathcal{X}_0$ that corresponds to the reachable state $x(T)\in\mathcal{X}_T$ that is the furthest in the direction $d$ is given by $$
 x(0)=\mu+Qp(0)/\sqrt{p(0)^\top Qp(0)},
 $$ where $p(0)=M(T)^{-1}d$ with $M(t)$ the solution to the matrix ODE $\dot{M}(t)=-A(t)^\top M(t)$ with $M(0)=I$.}
\end{lemma}
\begin{proof}
    \rev{By formulating OCP${}_d$ and BVP${}_d$ as in Section \ref{sec:structure_proof}.A-B, we know that the adjoint vector satisfies $p(T)=d$ and $\dot{p}(t)=-A(t)^\top p(t)$, so its initial value is $p(0)=M(T)^{-1}p(T)=M(T)^{-1}d$ with $M(t)$ solving the matrix ODE $\dot{M}(t)=-A(t)^\top M(t)$ with $M(0)=I$. 
    Thus, using (27), the initial state is $x(0)=n^{\partial\mathcal{X}_0}(p(0))=\mu+Qp(0)/\sqrt{p(0)^\top Qp(0)}$. }
\end{proof}

\rev{This result is precisely \cite[Equation 30]{Kurzhanski2000}. %
In the nonlinear case however, it is difficult to derive a closed-form expression for $p(0)$ as a function of $p(T)=d$, and one has to solve \BVP{d} instead to find $x(0)$. This is because the evolution of $p(t)$ depends on the state trajectory and on the value of $x(T)$ that is unknown. If instead, the objective is to reconstruct the convex hull of the entire reachable set, then Theorem \ref{thm:hull_F} states that it suffices to integrate the coupled \PMPODE{d^0} forward in time from $(x(0),p(0))=((n^{\partial\X_0})^{-1}(d^0),d^0)$ for all $d^0\in\cS^{n-1}$. See also Section \ref{sec:structure_proof:discussion} for further comments.}

\section{Additional results and proofs}
\subsection{Inverse Gauss map of level set ovaloids}\label{apdx:sec:gauss_map_level_set:inv}
We prove \eqref{eq:gauss_map_level_set:inv} in Example \ref{example:gauss_maps}.
\begin{lemma}[Inverse of Gauss maps of sublevel sets]\label{lem:gauss_map_level_set:inv}
Let $h:\R^n\to\R$ be smooth and $C=\{x\in\R^n:h(x)\leq 1\}$ be a compact set whose boundary $\partial C$ is an ovaloid. Assume that
\begin{equation}
\label{eq:gauss_map_level_set:inv:condition}
h\left(\nabla h^{-1}(n(x))\right)=1/\|\nabla h(x)\|^2
\ \ \text{for all}\  x\in\partial C.
\end{equation}
Then, the inverse Gauss map of $\partial C$ is given as
\begin{equation}\tag{\ref{eq:gauss_map_level_set:inv}}
n^{-1}(d)=\nabla h^{-1}\left(\frac{d}{\sqrt{h(\nabla h^{-1}(d)}}\right).
\end{equation}
\end{lemma}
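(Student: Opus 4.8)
The plan is to verify \eqref{eq:gauss_map_level_set:inv} by running the Gauss map backwards rather than by checking a candidate point directly. Since $\partial C$ is an ovaloid, the Gauss map $n:\partial C\to\cS^{n-1}$ is a diffeomorphism, so for each $d\in\cS^{n-1}$ there is a unique boundary point $x=n^{-1}(d)$, and it suffices to show that this $x$ equals the right-hand side of \eqref{eq:gauss_map_level_set:inv}. Working from $x=n^{-1}(d)$ (rather than defining a candidate $x^\star$ from the formula and then separately verifying $h(x^\star)=1$) is what makes the argument short: the membership $x\in\partial C$ is automatic, and the only quantity left to determine is the length $\|\nabla h(x)\|$ that the normalization in the Gauss map discards.

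First I would invoke the level-set form of the Gauss map, \eqref{eq:gauss_map_level_set}, giving $d=n(x)=\nabla h(x)/\|\nabla h(x)\|$; since $\nabla h$ is nonzero on the ovaloid $\partial C$, this rearranges to $\nabla h(x)=\|\nabla h(x)\|\,d$ with $\|\nabla h(x)\|>0$, and applying $\nabla h^{-1}$ yields $x=\nabla h^{-1}(\|\nabla h(x)\|\,d)$. The whole problem thus reduces to identifying the scalar $\|\nabla h(x)\|$. This is exactly where the standing hypothesis \eqref{eq:gauss_map_level_set:inv:condition} enters: evaluated at this particular $x\in\partial C$ and using $n(x)=d$, it reads $h(\nabla h^{-1}(d))=1/\|\nabla h(x)\|^2$, whence $\|\nabla h(x)\|=1/\sqrt{h(\nabla h^{-1}(d))}$. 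Substituting this value back into $x=\nabla h^{-1}(\|\nabla h(x)\|\,d)$ produces precisely $x=\nabla h^{-1}\big(d/\sqrt{h(\nabla h^{-1}(d))}\big)$, which is \eqref{eq:gauss_map_level_set:inv}.

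There is no deep obstacle here; the only care needed is in checking that every expression is well posed. I would confirm that $\nabla h$ is invertible on the relevant range (so that $\nabla h^{-1}$ is meaningful both at $d$ and at $d/\sqrt{h(\nabla h^{-1}(d))}$), that $\|\nabla h(x)\|>0$ on $\partial C$ so that \eqref{eq:gauss_map_level_set} and the division are legitimate, and that \eqref{eq:gauss_map_level_set:inv:condition} forces $h(\nabla h^{-1}(d))>0$, so the square root and final division are real and positive and the positive branch of the square root is the correct one. The single substantive ingredient is \eqref{eq:gauss_map_level_set:inv:condition} itself, which is precisely the scaling identity that recovers the radial length $\|\nabla h(x)\|$; everything else is bookkeeping about the normalization in the Gauss map.
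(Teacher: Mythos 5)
Your argument is correct and uses exactly the same two ingredients as the paper's proof: the level-set formula $n(x)=\nabla h(x)/\|\nabla h(x)\|$ and the scaling identity \eqref{eq:gauss_map_level_set:inv:condition} to recover $\|\nabla h(x)\|$. The only difference is presentational — you derive the formula from the known inverse $x=n^{-1}(d)$, while the paper verifies that the candidate formula is a two-sided inverse of $n$ — and both treatments share the same implicit assumption that $\nabla h^{-1}$ is well defined on the relevant domain, which you rightly flag.
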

\begin{remark}[On the condition in \eqref{eq:gauss_map_level_set:inv:condition}]\label{remark:gauss_map_level_set:inv:condition}
The condition in \eqref{eq:gauss_map_level_set:inv:condition} is necessary. Indeed, one verifies that the level set function $h(x)=\|x\|^4$ violates both \eqref{eq:gauss_map_level_set:inv:condition} and  \eqref{eq:gauss_map_level_set:inv}, although its sublevel set $\{x\in\R^n:h(x)\leq 1\}$ is the ball $B(0,1)$. One verifies that 
$$
\nabla h^{-1}\left(d / \sqrt{h(\nabla h^{-1}(d)}\right)=
\frac{d}{2^{1/9}},
$$
which is different than the inverse Gauss map $n^{-1}(d)=d$ of the ball $B(0,1)$.  
The condition in \eqref{eq:gauss_map_level_set:inv:condition} holds for the three sublevel sets used in this work (see  Example \ref{example:gauss_maps}) because their level set functions $h(x)$ behave approximately like the squared norm function $h(x)=\|x\|^2$. 
\end{remark}
\begin{proof}
The Gauss map $n:\partial C\to\cS^{n-1}, x\mapsto\nabla h(x)/\|\nabla h(x)\|$ is well-defined under the assumptions. Thus, the map $\nabla h$ and its inverse are well-defined. First,
\begin{align*}
n(n^{-1}(d))
&=
\nabla h(n^{-1}(d))/\|\nabla h(n^{-1}(d))\|
\mathop{=}^\eqref{eq:gauss_map_level_set:inv}d
\end{align*}  for all $d\in\cS^{n-1}$. Second, for any $x\in\partial C$,
\begin{align*}
n^{-1}(n(x))
&=
\nabla h^{-1}\left(\frac{n(x)}{\sqrt{h(\nabla h^{-1}(n(x))}}\right)
\\
&\mathop{=}^\eqref{eq:gauss_map_level_set:inv:condition}
\nabla h^{-1}\left(
\frac{\nabla h(x)/\|\nabla h(x)\|}{\sqrt{1/\|\nabla h(x)\|^2}}
\right)=x,
\end{align*}
so the map $n^{-1}(\cdot)$ in \eqref{eq:gauss_map_level_set:inv} is a well-defined inverse.
\end{proof}

\vspace{3mm}

\begin{tcolorbox}[
title={\textit{Example \ref{example:gauss_maps} revisited}},
colbacktitle=gray!20, 
coltitle=black,
breakable,
boxrule=3pt,
colframe=gray!20, %
boxsep=0pt, before skip=0pt, after skip=0pt]
The Gauss maps of the boundaries of common ovaloids are given in Example \ref{example:gauss_maps}. Here, we show that \eqref{eq:gauss_map_level_set:inv:condition} holds for these sets. Thus, by applying Lemma \ref{lem:gauss_map_level_set:inv}, the inverse Gauss map of these sets can be computed using \eqref{eq:gauss_map_level_set:inv}.

1) \textit{Ball}: Let $h(x)=\|x-\bar{x}\|^2/r^2$, so $C=B(\bar{x},r)$. Then, $\nabla h(x)=\frac{2}{r^2}(x-\bar{x})$, so $\nabla h^{-1}(y)=\bar{x}+y\frac{r^2}{2}$, so \eqref{eq:gauss_map_level_set:inv:condition} is easily verified.

2) \textit{Ellipsoid}: Let $h(x)=(x-\bar{x})^\top Q^{-1}(x-\bar{x})$, so $C=\mathcal{E}$. Then, $\nabla h(x)=2Q^{-1}(x-\bar{x})$, so $\nabla h^{-1}(y)=\bar{x}+\frac{1}{2}Qy$ and $n(x)=\frac{Q^{-1}(x-\bar{x})}{\|Q^{-1}(x-\bar{x})\|}$. Then, $\nabla h^{-1}(n(x))=\bar{x}+\frac{1}{2}\frac{(x-\bar{x})}{\|Q^{-1}(x-\bar{x})\|}$, so that
\begin{align*}
h(\nabla h^{-1}(n(x)))
&=
\frac{1}{4}\frac{(x-\bar{x})^\top Q^{-1}(x-\bar{x})}{\|Q^{-1}(x-\bar{x})\|^2}
\\
&\hspace{-1cm}=
\frac{1}{4}\frac{1}{\|Q^{-1}(x-\bar{x})\|^2}
=
\frac{1}{\|\nabla h(x)\|^2},
\end{align*}
as $h(x)=1$ for all $x\in\partial C$. Thus, \eqref{eq:gauss_map_level_set:inv:condition} is verified.

3) \textit{$\lambda$-balls}: For $\lambda>1$, let $h(x)=\|x-\bar{x}\odot\delta\bar{x}^{-1}\|_\lambda^2$, so $C=\eqref{eq:Clambda_under}$. Then, $\nabla h(x)=2\frac{(x-\bar{x})\odot|x-\bar{x}|^{\lambda-2}\odot\delta\bar{x}^{-\lambda}}{\|(x-\bar{x})\odot\delta\bar{x}^{-1}\|_\lambda^{\lambda-2}}$, so 
$\nabla h^{-1}(y)
=
\bar{x}+\frac{1}{2}\||y\odot\delta\bar{x}|^{\frac{1}{\lambda-1}}\|_\lambda^{\lambda-2}y^{\frac{1}{\lambda-1}}\odot\delta\bar{x}^{\frac{\lambda}{\lambda-1}}$. Then, with
\begin{equation}\tag{\ref{eq:gauss_map:Clambda}}
n(x)
=
\frac{(x-\bar{x})\odot|x-\bar{x}|^{\lambda-2}\odot\delta\bar{x}^{-\lambda}}{\||x-\bar{x}|^{\lambda-1}\odot\delta\bar{x}^{-\lambda}\|},
\end{equation}
one verifies that
\begin{align*}
\nabla h^{-1}(n(x)))
&=
\bar{x}+\frac{1}{2}\frac{\|(x-\bar{x})\odot\delta\bar{x}^{-1}\|^{\lambda-2}}{\||x-\bar{x}|^{\lambda-1}\odot\delta\bar{x}^{-\lambda}\|}
\big(
\\
&\quad(x-\bar{x})\odot|x-\bar{x}|\big)^{\frac{1}{\lambda-1}}.
\end{align*}
By computing $h(\nabla h^{-1}(n(x))))$ and $\|\nabla h(x)\|^2$, one verifies that \eqref{eq:gauss_map_level_set:inv:condition} holds.
\end{tcolorbox}

\subsection{Proof of Lemma \ref{lem:Clambda}}\label{sec:lem:Clambda:proof}
To prove Lemma \ref{lem:Clambda}, we use the following result.

\begin{lemma}[Hausdorff convergence of sub-level sets]\label{lem:sublevel_set_hausdoorff_to_zero}
Let $K\subset\R^n$ be a compact set, $h,h_\lambda:\R^n\to\R$ with $\lambda\geq 0$ be continuous maps, and define the compact sets
$$
C=\{x\in K:h(x)\leq 0\}, \ \
C_\lambda=\{x\in K:h_\lambda(x)\leq 0\}.
$$
Assume that 
\begin{itemize}
\item %
  $\sup_{x\in K}|h_\lambda(x)-h(x)|\to 0\text{ as }\lambda\to\infty$.
\item %
  For any $x\in C$ and $\epsilon>0$, there exists $z\in C$ such that $\|x-z\|\leq\epsilon$ and $h(z)<0$.\footnote{Only assuming the uniform convergence of the sub-level set functions is not sufficient \cite{Camilli1999} (as a counter-example, take $h(x)=0$ and $h_\lambda(x)=1/\lambda$). Assuming that the boundary of $C$ is smooth (e.g., $h(x)=\|x\|-R$ so $C$ is a ball of radius $R>0$) suffices to ensure the satisfaction of the interior point condition, albeit it is more conservative. For example, rectangular sets (with $h(x)=\|x\|_\infty-R$) do not have a smooth boundary but satisfy the interior point condition.}
\end{itemize}
Then,
\begin{equation}\label{eq:sublevel_set:hausdorff_to_zero}
d_H(C_\lambda,C)\to 0\text{ as }\lambda\to\infty.
\end{equation}
\end{lemma}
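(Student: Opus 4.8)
The plan is to split the Hausdorff distance into its two one-sided components,
$$d_H(C_\lambda, C) = \max\Big(\sup_{x \in C_\lambda} d_C(x),\ \sup_{y \in C} d_{C_\lambda}(y)\Big),$$
and show that each supremum tends to $0$ as $\lambda \to \infty$. Throughout I would write $\eta_\lambda = \sup_{x \in K} |h_\lambda(x) - h(x)|$, which vanishes as $\lambda \to \infty$ by the first hypothesis, and I would use that $C$ and each $C_\lambda$ are compact (closed sublevel sets of continuous functions intersected with the compact $K$). The two directions are asymmetric: the bound on $\sup_{x \in C_\lambda} d_C(x)$ (every point of $C_\lambda$ is near $C$) uses only the uniform convergence and compactness of $K$, whereas the bound on $\sup_{y \in C} d_{C_\lambda}(y)$ (every point of $C$ is near $C_\lambda$) genuinely requires the interior-point hypothesis, and this is where I expect the main difficulty.

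For the first direction I would argue by contradiction. If $\sup_{x \in C_\lambda} d_C(x) \not\to 0$, there exist $\epsilon_0 > 0$, a sequence $\lambda_k \to \infty$, and points $x_k \in C_{\lambda_k}$ with $d_C(x_k) > \epsilon_0$. Since $x_k \in K$ and $K$ is compact, I extract a convergent subsequence $x_k \to x^*$. From $h_{\lambda_k}(x_k) \leq 0$ and $|h(x_k) - h_{\lambda_k}(x_k)| \leq \eta_{\lambda_k} \to 0$ I obtain $\limsup_k h(x_k) \leq 0$, and continuity of $h$ then yields $h(x^*) \leq 0$, i.e.\ $x^* \in C$. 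But $d_C$ is $1$-Lipschitz, so $d_C(x^*) = \lim_k d_C(x_k) \geq \epsilon_0 > 0$, contradicting $x^* \in C$.

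For the second direction -- the heart of the proof -- I would fix $\epsilon > 0$ and invoke the interior-point hypothesis to produce, for each $y \in C$, a strictly feasible nearby witness $z_y \in C$ with $\|y - z_y\| < \epsilon/2$ and $h(z_y) < 0$. The obstacle is uniformity: the margin $-h(z_y)$ a priori depends on $y$, so no single $\Lambda$ obviously makes all witnesses feasible for $C_\lambda$ at once. I would resolve this with compactness: the open $\tfrac{\epsilon}{2}$-balls centered at the points of $C$ form an open cover of the compact set $C$, so finitely many, centered at $y_1,\dots,y_N$, already cover $C$. Setting $\delta = \min_i\,(-h(z_{y_i})) > 0$ (a minimum over finitely many positive numbers), I choose $\Lambda$ with $\eta_\lambda < \delta$ for all $\lambda \geq \Lambda$, so that $h_\lambda(z_{y_i}) \leq h(z_{y_i}) + \eta_\lambda < 0$ and hence each witness $z_{y_i}$ lies in $C_\lambda$. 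Any $y \in C$ lies in the $\tfrac{\epsilon}{2}$-ball about some $y_i$, whence $\|y - z_{y_i}\| < \epsilon$ and $d_{C_\lambda}(y) < \epsilon$; taking the supremum over $y$ and then letting $\epsilon \to 0$ closes this direction. Combining the two directions gives $d_H(C_\lambda, C) \to 0$.
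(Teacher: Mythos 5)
Your proof is correct and follows essentially the same route as the paper's: the same two-sided split of the Hausdorff distance, the same compactness-and-convergent-subsequence contradiction to show $\sup_{x\in C_\lambda} d_C(x)\to 0$, and the same use of the interior-point witnesses (made feasible for $C_\lambda$ once $\eta_\lambda$ is below their margin) for the other direction. If anything, your finite-subcover step making the margin $\delta$ uniform over $y\in C$ is more careful than the paper's own write-up, which lets $\delta$ --- and hence the threshold on $\lambda$ --- depend on the individual point of $C$ without addressing uniformity.
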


\textit{Proof of Lemma \ref{lem:sublevel_set_hausdoorff_to_zero}:} 
We proceed in two steps. 

First, we show that
\begin{equation}\label{eq:sublevel_set:hausdorff_to_zero:1}
\sup_{x\in C}\Inf_{y\in C_\lambda}\|x-y\|\to 0\text{ as }\lambda\to\infty.
\end{equation}
Let $\epsilon>0$. We claim that $\exists\lambda>0$ such that $\forall\bar\lambda\geq\lambda$,
\begin{equation}\label{eq:sublevel_set:hausdorff_to_zero:1:epsilon}
\sup_{x\in C}\Inf_{y\in C_{\bar\lambda}}\|x-y\|\leq\epsilon.
\end{equation}
Let $x\in C$ be arbitrary. Then, 
$\exists z\in C$ such that $\|x-z\|\leq\epsilon$ with $h(z)=-\delta<0$ for some $\delta>0$. 
Moreover, %
$\exists\lambda>0$ such that $\forall\bar\lambda\geq\lambda$, $\sup_{x\in K}\|h_{\bar\lambda}(x)-h(x)\|\leq\delta/2$. Thus,
\begin{align*}
h_{\bar\lambda}(z)&=h_{\bar\lambda}(z)-h(z)+h(z)
\\
&\leq
\sup_{x\in K}
\|h_{\bar\lambda}(x)-h(x)\|+h(z)
=-\delta/2
<0,
\end{align*}
so that $z\in C_{\bar\lambda}$. We obtain that $\Inf_{y\in C_{\bar\lambda}}\|x-y\|\leq\epsilon$, which implies that \eqref{eq:sublevel_set:hausdorff_to_zero:1:epsilon}, and thus \eqref{eq:sublevel_set:hausdorff_to_zero:1}.

Second, we prove that 
\begin{equation}
\label{eq:sublevel_set:hausdorff_to_zero:2}
\sup_{x\in C_\lambda}\Inf_{y\in C}\|x-y\|\to 0\text{ as }\lambda\to\infty.
\end{equation}
Let $\epsilon>0$. We claim that $\exists\lambda>0$ such that $\forall\bar\lambda\geq\lambda$,
\begin{equation}
\label{eq:sublevel_set:hausdorff_to_zero:2:epsilon}
\sup_{x\in C_{\bar\lambda}}\Inf_{y\in C}\|x-y\|
\leq\epsilon.
\end{equation}
By contradiction, assume that $\forall\lambda>0$, $\exists\bar\lambda\geq\lambda$ such that 
$$
\sup_{x\in C_{\bar\lambda}}\Inf_{y\in C}\|x-y\|
>\epsilon.
$$
Then, there exists a sequence $(x_{\bar\lambda_\lambda})_{\lambda>0}$ such that $x_{\bar\lambda_\lambda}\in C_{\bar\lambda_\lambda}$ for all $\lambda>0$ and 
\begin{equation}\label{eq:sublevel_set:hausdorff_to_zero:2:epsilon:contradiction}
\Inf_{y\in C}\|x_{\bar\lambda_\lambda}-y\|
>\epsilon/2
\text{ for all }\lambda>0.
\end{equation}
Since $x_{\bar\lambda_\lambda}\in C_{\bar\lambda_\lambda}\subseteq K$ for all $\lambda>0$ and $K$ is compact, there exists a subsequence $(x_{{\bar\lambda_{\lambda_k}}})_{k\in\bN}$ and $x\in K$ such that $x_{{\bar\lambda_{\lambda_k}}}\to x$ as $k\to\infty$. Thus, $\exists N\in\bN$ such that
$$
\|x_{{\bar\lambda_{\lambda_k}}}-x\|
\leq\epsilon/2\text{ for all }k\geq N.
$$
Moreover, since $h$ is continuous and $h_{{\bar\lambda_{\lambda_k}}}(x_{{\bar\lambda_{\lambda_k}}})\leq 0$,
\begin{align*}
h(x)&=
h\Big(\lim_{k\to\infty}x_{{\bar\lambda_{\lambda_k}}}\Big)
=
\lim_{k\to\infty}h(x_{{\bar\lambda_{\lambda_k}}})
\\
&=
\lim_{k\to\infty}
\left(
h(x_{{\bar\lambda_{\lambda_k}}})-h_{{\bar\lambda_{\lambda_k}}}(x_{{\bar\lambda_{\lambda_k}}})
+
h_{{\bar\lambda_{\lambda_k}}}(x_{{\bar\lambda_{\lambda_k}}})
\right)
\\
&\leq
\lim_{k\to\infty}
\left(
\sup_{z\in K}
\|
h(z)-h_{{\bar\lambda_{\lambda_k}}}(z)\|
+
h_{{\bar\lambda_{\lambda_k}}}(x_{{\bar\lambda_{\lambda_k}}})
\right)
\\
&\leq
\lim_{k\to\infty}
\sup_{z\in K}
\|
h(z)-h_{{\bar\lambda_{\lambda_k}}}(z)\|
= 0,
\end{align*}
so $x\in C$.  Thus,
$$
\Inf_{y\in C}
\|x_{{\bar\lambda_{\lambda_k}}}-y\|
\leq
\|x_{{\bar\lambda_{\lambda_k}}}-x\|
\leq\epsilon/2\text{ for all }k\geq N,
$$
contradicting \eqref{eq:sublevel_set:hausdorff_to_zero:2:epsilon:contradiction}. Thus,  \eqref{eq:sublevel_set:hausdorff_to_zero:2:epsilon} holds, which implies \eqref{eq:sublevel_set:hausdorff_to_zero:2}. 

Combining \eqref{eq:sublevel_set:hausdorff_to_zero:1} and \eqref{eq:sublevel_set:hausdorff_to_zero:2}  gives \eqref{eq:sublevel_set:hausdorff_to_zero}.
\phantom{a} \hfill $\blacksquare$

\textit{Proof of Lemma \ref{lem:Clambda}:}
The first claims follow from the convexity of the map $x\mapsto \|x\|_\lambda$ for any $\lambda>1$, the continuity of the maps $h$ and $h_\lambda$ defined in \eqref{eq:C:h} and \eqref{eq:C:hlambda}, and the fact that $
\|x\|_\infty\leq\|x\|_\lambda\leq n^{\frac{1}{\lambda}}\|x\|_\infty\leq n^{\frac{1}{\lambda}}\|x\|_\lambda
$.

The third claims are shown in three steps. We prove these claims for $\partial\uwidehat{C_\lambda}$; the proof for $\partial\widehat{C_\lambda}$ is similar. 
First, 
$$
\partial \uwidehat{C_\lambda}=\{x\in\R^n:h_\lambda(x)=1\}.
$$
Second, $\partial\uwidehat{C_\lambda}$ is an $(n-1)$-dimensional submanifold, since $\nabla h_\lambda(x)$ is full rank for all $x\in\partial\uwidehat{C_\lambda}$ \cite[Theorem 5.12]{Lee2012}. Thus, the set $\partial\uwidehat{C_\lambda}$ has a well-defined unit-norm outward-pointing normal vector at any $x\in\partial\uwidehat{C_\lambda}$, denoted by $n^{\partial\uwidehat{C_\lambda}}(x)$, which is given by \cite[Chapter 8]{Lee2018}
\begin{equation*}
n^{\partial\uwidehat{C_\lambda}}(x)=
\nabla h_\lambda(x)/\|\nabla h_\lambda(x)\|
=\eqref{eq:gauss_map:Clambda}.
\end{equation*}
One checks that the map $n^{\partial\uwidehat{C_\lambda}}:\partial\uwidehat{C_\lambda}\to\cS^{n-1}$ is a well-defined diffeomorphism, and that its inverse $n^{\partial\uwidehat{C_\lambda}}:\cS^{n-1}\to\partial\uwidehat{C_\lambda}$ is given by \eqref{eq:gauss_map:Clambda_under:inverse}. 
Third, we show that $\partial\uwidehat{C_\lambda}$ has strictly positive curvature. Assuming that $\bar{x}_i=0$ and $\delta\bar{x}_i=1$ for all $i=1,\dots,n$ without loss of generality\footnote{The proof in the general case follows similarly after making the affine change of variable $x\mapsto\bar{x}+x\odot\delta\bar{x}^{\lambda/\lambda-1}$.}, 
the Jacobian of $n^{\partial\uwidehat{C_\lambda}}$ is
\begin{align*}
\nabla n^{\partial\uwidehat{C_\lambda}}(x)
&=
\bigg(\frac{\lambda-1}{\left\||x|^{\lambda-1}\right\|}
\text{diag}\left(|x|^{\lambda-2}\right)
-
\\
&\frac{\lambda-1}{\left\||x|^{\lambda-1}\right\|^3}\left(x\odot |x|^{\lambda-2}\right)\left(x\odot |x|^{2\lambda-4}\right)^\top\bigg).
\end{align*}
Since $n^{\partial\uwidehat{C_\lambda}}(x)\in N_x\uwidehat{C_\lambda}$, 
any tangent vector $v\in T_x\uwidehat{C_\lambda}$ satisfies $v^\top n^{\partial\uwidehat{C_\lambda}}(x)=0$. Thus, from $n^{\partial\uwidehat{C_\lambda}}(x)=\eqref{eq:gauss_map:Clambda}$, we obtain $v^\top(x\odot |x|^{\lambda-2})=0$. Thus, for all $v\in T_x\partial\uwidehat{C_\lambda}$,
$$
v^\top \nabla n^{\partial\uwidehat{C_\lambda}}(x) v 
= 
\frac{\lambda-1}{\left\||x|^{\lambda-1}\right\|}
\sum_{i=1}^n
|x_i|^{\lambda-2}v_i^2\geq 0,
$$
and $v^\top \nabla n^{\partial\uwidehat{C_\lambda}}(x) v=0$ if and only if $v=0$. Thus, the $(n-1)$ eigenvalues of the shape operator $v\in T_x\M\mapsto S_x(v)=\nabla n^{\partial\uwidehat{C_\lambda}}(x) v\in T_x\M$ are strictly positive. We conclude that the principal curvatures of $\M$ are strictly positive, so $\partial\uwidehat{C_\lambda}$ is an ovaloid.

The second claims follow from Lemma \ref{lem:sublevel_set_hausdoorff_to_zero}, noting that 
$$
\sup_{x\in\widehat{C_1}}|h_\lambda(x)-h(x)|\to 0\text{ as }\lambda\to\infty
$$
and where the second condition of Lemma \ref{lem:sublevel_set_hausdoorff_to_zero} holds for any $x\in\partial \uwidehat{C_\lambda}$ (or any $x\in\partial\widehat{C_\lambda}$) by taking $z=x-\epsilon n^{\partial C_\lambda}(x)$ for $\epsilon>0$ small-enough, since $n^{\partial C_\lambda}(x)$ is outward-pointing (this argument can be made rigorous using boundary charts, see e.g. \cite[Lemma 4.6]{LewBonalliJansonPavone2023}).
\phantom{a} \hfill $\blacksquare$

\subsection{Small differences in dynamics, initial conditions, and disturbances imply small reachable set errors}
\begin{lemma}[Continuity of the reachable sets]\label{lem:stability_reachset_errors}
Consider two sets of disturbances $\W^1,\W^2\subset\R^m$ and initial conditions $\X_0^1,\X_0^2\subset\R^n$, two dynamical systems parameterized by $f_1,f_2:\R\times\R^n\to\R^n$ and $g_1,g_2:\R\times\R^n\to\R^{n\times m}$, and defined by the ODEs
\begin{align}\label{eq:ODE_i}
\dot{x}(t)=f_i(t,x(t))+g_i(t,x(t))w(t),
\, t\in[0,T]
\end{align}
from $x(0)=x_0^i\in\X_0^i$ with $w\in L^\infty([0,T],\W^i)$ for $i=1,2$. 
Denote the solutions to \eqref{eq:ODE_i} by $x^i_{(w,x^0)}(\cdot)$ and the reachable sets for $t\in[0,T]$ by
\begin{align*}
\X_t^i=
\big\lbrace
x^i_{(w,x^0)}(t): 
w\in L^\infty([0,T],\W^i),
x^0\in\X_0^i
\big\rbrace.
\end{align*}
Assume that $((f_1,f_2),(g_1,g_2),(\W^1,\W^2),(\X_0^1,\X_0^2))$ satisfy Assumptions \ref{assumption:f}-\ref{assumption:X0}, respectively, 
and that for some $\epsilon>0$,
\begin{subequations}
\begin{align}
&\|f_1-f_2\|_\infty\leq\epsilon,
\quad
\|g_1-g_2\|_\infty\leq\epsilon,
\\
\label{eq:dH_W1W2_X01X02_epsilon}
&d_H(\X_0^1,\X_0^2)\leq\epsilon,\ \text{ and }\,
d_H(\W^1,\W^2)\leq\epsilon.
\end{align}
\end{subequations}
Then, for a finite constant $C_T$ and all $t\in[0,T]$, %
$$
d_H(\X_t^1,\X_t^2)\leq C_T\epsilon.
$$
\end{lemma}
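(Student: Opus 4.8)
The plan is to bound the Hausdorff distance by a Grönwall comparison between trajectories of the two systems. By the symmetry of $d_H$ in \eqref{eq:hausdorff_distance}, it suffices to bound $\sup_{x\in\X_t^1}d_{\X_t^2}(x)$ by $C_T\epsilon$ uniformly in $t\in[0,T]$, the reverse term following by exchanging the two systems. So I would fix $x\in\X_t^1$ and choose $w\in L^\infty([0,T],\W^1)$ and $x^0\in\X_0^1$ with $x=x^1_{(w,x^0)}(t)$.

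First I would build a comparable input for the second system. Since $d_H(\X_0^1,\X_0^2)\le\epsilon$, there is $\tilde{x}^0\in\X_0^2$ with $\norm{x^0-\tilde{x}^0}\le\epsilon$. For the disturbance I would take the pointwise projection $\tilde{w}=\Proj_{\W^2}\circ\,w$ onto the convex set $\W^2$ (convex by Assumption \ref{assumption:W}). As the projection onto a nonempty closed convex set is $1$-Lipschitz, $\tilde{w}$ is measurable and essentially bounded, $\tilde{w}(t)\in\W^2$ a.e., and $\norm{w(t)-\tilde{w}(t)}=d_{\W^2}(w(t))\le d_H(\W^1,\W^2)\le\epsilon$ because $w(t)\in\W^1$. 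This resolves the only delicate point: producing an admissible $L^\infty$ disturbance for the second system that stays $\epsilon$-close to $w$.

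Next I would compare $y^1=x^1_{(w,x^0)}$ and $y^2=x^2_{(\tilde{w},\tilde{x}^0)}$. Inserting intermediate terms into $\dot{y}^1-\dot{y}^2$, I split the difference into a Lipschitz-in-state part, bounded by $(L_f+L_g M_W)\norm{y^1-y^2}$ via the global Lipschitz constants $L_f,L_g$ of $f_1,g_1$ and $M_W=\sup_{v\in\W^1}\norm{v}<\infty$, and an $O(\epsilon)$ part collecting $\norm{f_1-f_2}_\infty\le\epsilon$, $\norm{g_1-g_2}_\infty\le\epsilon$, and $\norm{w-\tilde{w}}\le\epsilon$. To control the term $g_2(t,y^2)(w-\tilde{w})$ I would first record a standard a priori estimate: all trajectories of both systems remain in a fixed compact set on $[0,T]$ (Grönwall, using the linear growth from global Lipschitzness together with compactness of $\X_0,\W$), on which $\norm{g_2(t,\cdot)}$ is dominated by an integrable function of $t$ by Assumption \ref{assumption:f}.

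Collecting these estimates gives $\frac{d}{dt}\norm{y^1(t)-y^2(t)}\le L\,\norm{y^1(t)-y^2(t)}+c(t)\,\epsilon$ for a constant $L$ and an integrable $c$, with initial gap $\norm{y^1(0)-y^2(0)}\le\epsilon$. The integral form of Grönwall's inequality then yields $\norm{y^1(t)-y^2(t)}\le C_T\epsilon$ for all $t\in[0,T]$, where $C_T$ depends only on $L$, $\int_0^T c$, and $T$; hence $d_{\X_t^2}(x)\le C_T\epsilon$. Taking the supremum over $x\in\X_t^1$ and symmetrizing gives $d_H(\X_t^1,\X_t^2)\le C_T\epsilon$. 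The main obstacle is the admissible, measurable construction of $\tilde{w}$, handled cleanly by the projection; everything else is a routine Grönwall estimate, the only care being the a priori boundedness that legitimizes the constants.
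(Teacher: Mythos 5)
Your proposal is correct and follows essentially the same route as the paper's proof: match each trajectory of one system with a trajectory of the other via a nearby initial condition and a nearby admissible disturbance, then run a Gr\"onwall comparison using the global Lipschitz constants, the sup-norm bounds on $f_1-f_2$ and $g_1-g_2$, and an a priori compactness bound on the reachable states. Your explicit construction of $\tilde{w}$ as the pointwise projection onto the convex set $\W^2$ is a slightly more careful handling of the measurable selection that the paper simply asserts to exist from $d_H(\W^1,\W^2)\le\epsilon$; otherwise the two arguments coincide.
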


\textit{Proof of Lemma \ref{lem:stability_reachset_errors}:}
For conciseness, we denote $x_t^i=x_{(w^i,x_0^i)}(t)$, $w_t^i=w^i(t)$, $f(x_t^i)=f(t,x_t^i)$, and $g(x_t^i)=g(t,x_t^i)$ for any $t\in[0,T]$ and $i=1,2$. 
It suffices to prove that given any $w^1\in L^\infty([0,T],\W^1)$ and $x_0^1\in\X_0^1$, there exists some $w^2\in L^\infty([0,T],\W^2)$ and $x_0^2\in\X_0^2$ such that $\sup_{t\in[0,T]}\|x_t^1-x_t^2\|\leq C_T\epsilon$.  
Given $w^1\in L^\infty([0,T],\W^1)$ and $x_0^1\in\X_0^1$, 
let $w^2\in L^\infty([0,T],\W_2)$ and $x_0^2\in\X_0^2$ be such that $\sup_{t\in[0,T]}\|w^1_t-w^2_t\|\leq\epsilon$ and $\|x_0^1-x_0^2\|\leq\epsilon$, which exist thanks to \eqref{eq:dH_W1W2_X01X02_epsilon}. First,

{\small
\begin{align}
\left\|\int_0^ta_s\dd s\right\|
&\leq
\left\|\int_0^ta_s\dd s\right\|_1
\leq
\int_0^t\left\|a_s\right\|_1\dd s
\leq
\sqrt{n}
\int_0^t\left\|a_s\right\|\dd s
\label{eq:ineq_integral_norm}
\end{align}
}%
for any integrable map $a:\R\to\R^n$. %
Then, 

{\small
\begin{align*}
\|x^1_t-x^2_t\|
&=
\bigg\|
x_0^1-x_0^2+\int_0^t (f_1(x^1_s)-f_2(x^2_s))\dd s 
\, +
\\
&\hspace{6mm}
\int_0^t (g_1(x^1_s)w^1_s-g_2(x^2_s)w^2_s)\dd s
\bigg\|
\\
&\hspace{-15mm}\leq
\left\|\Delta x_0\right\|+
\bigg\|
\int_0^t (f_1(x^1_s)-f_2(x^1_s)+f_2(x^1_s)-f_2(x^2_s))\dd s 
\, +
\\
&\hspace{-14mm}
\int_0^t ((g_1(x^1_s)-g_2(x^1_s)+g_2(x^1_s)-g_2(x^2_s))w^1_s+g_2(x^2_s)\Delta w_s)\dd s
\bigg\|
\\
&\hspace{-15mm}\mathop{\leq}^{\eqref{eq:ineq_integral_norm}}
\left\|\Delta x_0\right\|+
\sqrt{n}\int_0^t \|\Delta f(x^1_s)\|+L\|x^1_s-x^2_s\|\dd s 
\, +
\\
&\hspace{-15mm}
\sqrt{n}\int_0^t((\|\Delta g(x^1_s)\|+L\|x^1_s-x^2_s\|)\|w^1_s\|+\|g_2(x^2_s)\|\|\Delta w_s\|)\dd s
\\
&\hspace{-15mm}
\leq
\epsilon(1+\sqrt{n}(1+\bar{w}+\bar{g})t+
\sqrt{n}(L+L\bar{w})\int_0^t\|x_s^1-x_s^2\|\dd s,
\end{align*}
}%
where $\bar{w}=\sup_{w\in\W^1}\|w\|$, $L$ is the Lipschitz constant of $\max(f_2,g_2)$, and $\bar{g}=\sup_{(t,x)\in[0,T]\times \bar{\X}}\|g_2(t,x)\|$ for some set $\bar{\X}$ such that $\X_t\subseteq\bar{\X}$ for all $t\in[0,T]$  (note that the reachable sets $\X_t$ are compact by Lemma \ref{lem:Y_is_compact}, so $\bar{g}$ is finite). 
By Gr\"onwall's inequality, 
$$
\|x_t^1-x_t^2\|\leq
\epsilon(1+\sqrt{n}(1+\bar{w}+\bar{g})t
e^{\sqrt{n}(L+L\bar{w})}
=
C_T\epsilon.
$$
The conclusion follows. %
\hfill $\blacksquare$

\subsection{Details on the neural feedback loop analysis results}\label{appendix:numerical:nn}
We consider the system and neural network from \cite[Sec. VIII.A-C]{Everett21_journal}, replacing the \texttt{ReLU} activation functions with smooth \texttt{Softplus} activations so that Assumption \ref{assumption:f} is satisfied.\footnote{Note that by selecting appropriate hyperparameters, \texttt{Softplus} activations can be made arbitrarily close to \texttt{ReLU} activation functions.} We use the open-source implementation of \texttt{ReachLP} \cite{Everett21_journal}.  We discretize the dynamics with an Euler scheme at $\Delta t=0.25\textrm{s}$ and predict reachable sets over a horizon $T=4\textrm{s}$. 
The set of initial conditions is the ellipsoid $\X_0=\{x_0\in\R^2:(x_0-c_0)^\top Q_0^{-1}(x_0-c_0)\leq 1\}$ with $c_0=(2.75,0)$ and $Q_0=2\textrm{diag}(0.25^2,0.1^2)$. The set of disturbances is the ball $\W=B(0,\sqrt{2}/20)\subset\R^2$. 
\\
We use the sets of initial conditions $\bar{\X}_0=[2.5,3]\times[-0.1,0.1]$ and disturbances $\bar{\W}=\{w_t\in\R^2:\|w_t\|_\infty\leq 1/20\}$ for \texttt{ReachLP} (which only handles polytopic disturbance sets). Since $\bar{\X}_0\subset\X_0$ and $\bar{\W}\subset\W$, this choice for $(\bar{\X}_0,\bar{\W})$ is fair as it reduces the conservatism of \texttt{ReachLP}. We use the sets $\X_0$ and $\W$ for Algorithm \ref{alg:1} and \texttt{RandUP} \cite{LewJansonEtAl2022}, which satisfy Assumptions \ref{assumption:W} and \ref{assumption:X0}. 
The initial direction values $d^0$ for Algorithm \ref{alg:1} are selected to evenly cover the circle $\cS^{n-1}$.

\subsection{Details on the spacecraft attitude control results}\label{appendix:numerical:spacecraft}
\textit{Implementation}: We use $T=10\textrm{s}$, discretize \eqref{eq:spacecraft:dynamics} as %
\begin{equation}\label{eq:dynamics:discrete-time}
x((k+1)\Delta t)=\bar{f}(x(k\Delta t),\bar{u}(k\Delta t))+w(k\Delta t), 
\end{equation}
where $\bar{f}$ is given by a Runge-Kutta (RK4) scheme with $\Delta t=1$\,\textrm{s}, and enforce the constraints in \eqref{eq:spacecraft:constraints:approx} at each timestep $t_k=k\Delta t$, which is justified by the continuity of the state trajectories. 
We select $M=50$ samples $d^0\in\cS^{n-1}$ on a Fibonacci lattice \cite{Gonzalez2009}, which gives an internal $\delta$-covering of $\cS^{n-1}$ for $\delta\approx 0.5$. We evaluate $\bar{L}_t$ and $\bar{H}_t$ using $10^5$ samples\footnote{We evaluate the empirical bound $\bar{L}_t=\max_j\|\nabla F(d^j,t)\|$ %
using $10^5$ samples of the directions $d^j\in\cS^{n-1}$ %
and control inputs $\bar{u}^j$.} and directly use the error bounds $\epsilon_t$ predicted in \eqref{eq:error_bound}. This choice of $\epsilon_t$ ensures that the problem with the approximated constraints \eqref{eq:spacecraft:constraints:approx} gives solutions that satisfy the original constraints in \eqref{eq:spacecraft:constraints}, see also \cite{LewBonalliJansonPavone2023}.  %
We only parameterize the nominal state and control trajectories $(x_0,\bar{u})$ and evaluate \eqref{eq:spacecraft:constraints:approx} and its gradient as a function of $\bar{u}$. 
To solve the optimization problem, we use a standard sequential convex programming (SCP)  
\ifarxiv
    scheme \cite{LewBonalliEtAl2020,BonalliLewESAIM2022,BonalliLewEtAl2022} that
\else
    \rev{scheme \cite{BonalliLewEtAl2022} that}
\fi 
consists of iteratively solving convex approximations of the problem until convergence. 
We use a Python implementation with \textrm{Jax} \cite{jax2018github} and solve the convexified problems using \textrm{OSQP} \cite{Stellato2020}.

\textit{Lipschitz-based reachability baseline}:
After discretizing \eqref{eq:spacecraft:dynamics} as  \eqref{eq:dynamics:discrete-time}, 
this standard baseline computes a reachable tube for the angular velocities $\mathcal{T}_t=\{y\in\R^3: (y-\omega_0(k\Delta t))^\top Q_{k}^{-1}(y-\omega_0(k\Delta t))\leq 1\}$ where $x_0=(q_0,\omega_0)$ follows nominal dynamics $x_0((k+1)\Delta t)=\bar{f}(x_0(k\Delta t),\bar{u}(k\Delta t))$ and the shape matrices $Q_k\in\R^{3\times 3}$ are recursively defined as $Q_0=0$, $Q_{k+1}=\frac{c+1}{c}Q^{\text{nom}}_k+(1+c)Q^{(\bar{w},\text{Lip})}_k$ for $k\in\bN$, 
where the first term $Q^{\text{nom}}_k=\bar{A}_k Q_k\bar{A}_k^\top$ with $\bar{A}_k=\nabla_x\bar{f}(x_0(k\Delta t), \bar{u}(k\Delta t))$ propagates uncertainty at time $k\Delta t$ with a linearized model, 
$Q^{(\bar{w},\text{Lip})}_k=3(\bar{w}+\frac{\bar{H}}{2}\lambda_{\max}(Q_k))^2I_3$ with $\bar{w}=10^{-2}$ and $\bar{H}$ the Lipschitz constant of $\nabla_x\bar{f}$ accounts for the disturbance and the linearization error, and $c^2=\text{Trace}(Q^{\text{nom}}_k)/\text{Trace}(\bar{Q}^{(\bar{w},\text{Lip})}_k)$.  This standard baseline is described in \cite{koller2018} (see also \cite{LewPavone2020} and \cite{Leeman2023}) and ensures that the trajectories resulting from the discrete-time dynamics \eqref{eq:dynamics:discrete-time} with $w(k\Delta t)\in\W$ are contained in the tube $\mathcal{T}$. 

\newpage

\begin{figure}[t]
\centering
\includegraphics[width=0.8\linewidth,trim=10 40 5 5, clip]{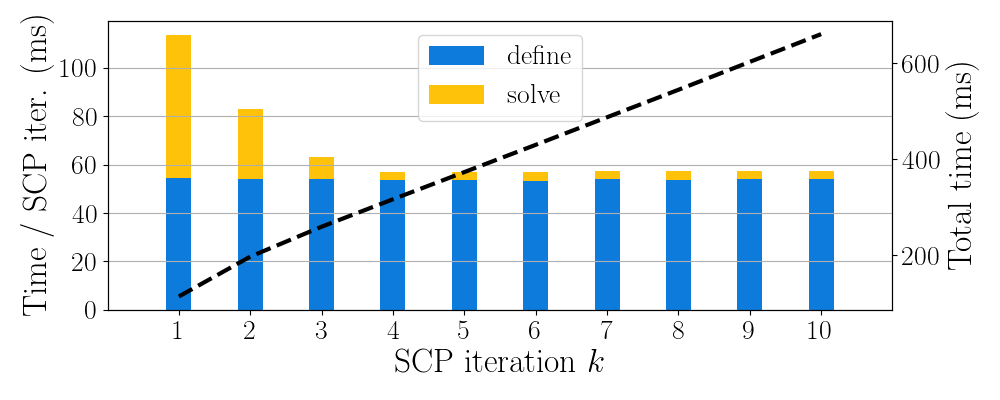}
\\
\includegraphics[width=0.8\linewidth,trim=10 20 5 5, clip]{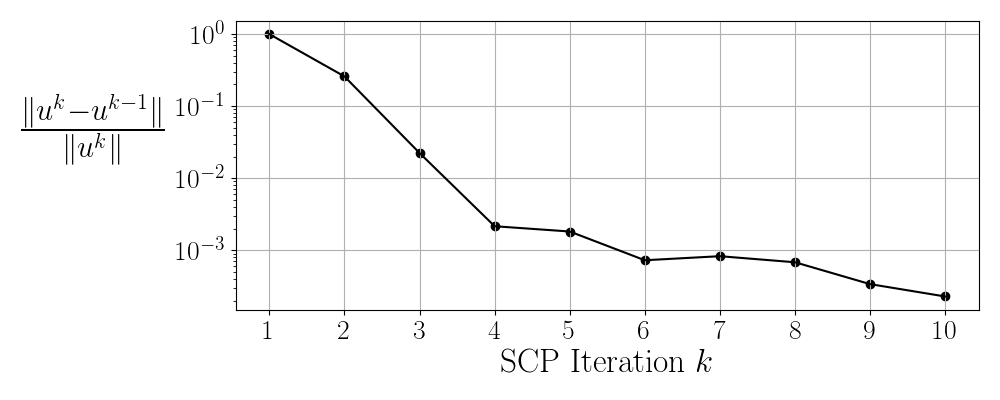}
\caption{Solving \OCPMPC{x^0} via SCP: median numerical resolution statistics over $50$ experiments for $M = 50$ samples with computation times (top) and SCP iteration errors (bottom).}
\label{fig:spacecraft:comp_times}
\includegraphics[width=0.6\linewidth,trim=0 30 0 0, clip]{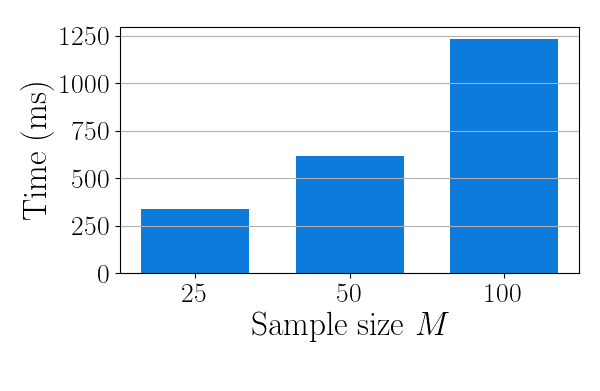}
\caption{Total computation time for solving \OCPMPC{x^0} for different number of samples $M$: median statistics over $50$ runs.}
\label{fig:spacecraft:comp_times_Ms}
\end{figure}

\phantom{a}
\vfill
\phantom{a}

\end{document}